\definecolor{darkblue}{rgb}{0,0,.7}
\newlist{alphenum}{enumerate}{1}
\setlist[alphenum]{fullwidth,label={(\alph*)}}
\newtheorem{theorem}{Theorem}[section]
\newtheorem{remark}[theorem]{Remark}
\newtheorem{lemma}[theorem]{Lemma}
\newtheorem{assumption}[theorem]{Assumption}
\numberwithin{figure}{section}
\numberwithin{table}{section}
\numberwithin{equation}{section}
\newcommand{\ds}{\, ds}
\newcommand{\abs}[1]{\left \lvert #1 \right \rvert}
\newcommand{\jump}[1]{ [\![ {#1} ]\!]}
\newcommand{\average}[1]{\{\!\{ #1  \}\!\}}
\newcommand{\vecb}[1]{\boldsymbol{#1}}
\begin{document}

\title[Inf-sup stabilized Scott--Vogelius for Navier--Stokes]{Inf-sup stabilized Scott--Vogelius pairs on general simplicial grids for Navier--Stokes equations}

\author{Naveed Ahmed}
\address{Gulf University for Science and Technology, Block 5, Building 1, Muarak Al-Abdullah Area, West Mishref Kuwait and
Weierstrass Institute for Applied Analysis and Stochastics (WIAS), Mohrenstr.
39, 10117 Berlin, Germany}
\email{ahmed.n@gust.edu.kw}
\author{Volker John}
\address{Weierstrass Institute for Applied Analysis and Stochastics (WIAS), Mohrenstr.
39, 10117 Berlin, Germany and
Freie Universit\"at Berlin, Department of Mathematics and
Computer Science, Arnimallee 6, 14195 Berlin, Germany}
\email{john@wias-berlin.de, ORCID 0000-0002-2711-4409}
\author{Xu Li}
\address{School of Mathematics, Shandong University, Jinan 250100, China}
\email{xulisdu@126.com}
\author{Christian Merdon}
\address{Weierstrass Institute for Applied Analysis and Stochastics (WIAS), Mohrenstr.
39, 10117 Berlin, Germany}
\email{merdon@wias-berlin.de}

\begin{abstract}
  This paper considers the discretization of the time-dependent
  Navier--Stokes equations with the family of inf-sup
  stabilized Scott--Vogelius pairs recently introduced in
  [John/Li/Merdon/Rui, arXiv:2206.01242, 2022] for the Stokes problem.
  Therein, the velocity space is obtained by enriching the $\vecb{H}^1$-conforming Lagrange element space
  with some $\vecb{H}(\mathrm{div})$-conforming Raviart--Thomas functions,
  such that the divergence constraint is satisfied exactly.
  In these methods arbitrary shape-regular simplicial grids can be used.

  In the present paper two alternatives for discretizing the convective terms are considered.
  One variant leads to a scheme that still only involves volume integrals, and
  the other variant employs upwinding known from DG schemes. Both variants ensure the conservation of linear momentum and angular momentum in
  some suitable sense. In addition, a pressure-robust and convection-robust velocity error estimate is derived, i.e.,
  the velocity error bound does not depend on the pressure and
  the constant in the error bound for the kinetic energy does not blow up for small viscosity.
  After condensation of the enrichment unknowns and all non-constant
  pressure unknowns, the method can be reduced to a $\vecb{P}_k-P_0$-like system for arbitrary velocity
  polynomial degree $k$. Numerical
  studies verify the theoretical findings.
\end{abstract}

\subjclass[2020]{76D05, 76M10, 65M60}

\keywords{Navier--Stokes equations,
finite element methods,
divergence-free,
pressure-robust,
convection-robust,
a priori bounds}

\maketitle

\section{Introduction}

Incompressible flows are modeled by the transient Navier--Stokes equations
and seek a velocity $\vecb{u}$ and pressure $p$ such that
\begin{align}\label{eq:navierstokes}
\begin{array}{rcll}
\partial_t \vecb{u}+(\vecb{u}\cdot\nabla)\vecb{u}-\nu \Delta \vecb{u} +\nabla p & = & \vecb{f} & \mbox{in } (0, T]\times\Omega,\\
\mathrm{div} (\vecb{u}) & = & 0 &  \mbox{in } (0, T]\times\Omega,\\
\vecb{u}(0)&=&\vecb{u}^0 & \mbox{in } \Omega,\\
\vecb{u} & = & \vecb{0} & \mbox{on }(0, T]\times\partial \Omega,
\end{array}
\end{align}
in a bounded Lipschitz domain
$\Omega \subset \mathbb R^d$, $d\in\{2,3\}$, and for a given $T<\infty$.
The given data $\nu\in\mathbb R$, $\vecb{f}$, and
$\vecb{u}^0$ denote the dimensionless viscosity, the external force,
and the initial velocity, respectively.
Note that problem \eqref{eq:navierstokes} is already given in a dimensionless form.
For simplicity, it is assumed that $\vecb{f} \in L^2(0,T;\vecb{L}^2(\Omega))$.
The method under consideration is based on a
classical weak formulation for \eqref{eq:navierstokes}:
Find $(\vecb{u},p) :(0,T]\rightarrow \vecb{V}\times Q:= \vecb{H}_0^1(\Omega) \times L_0^2(\Omega)$ such that
\begin{align}\label{eq:navierstokes_weak}
\begin{array}{rcll}
(\partial_t \vecb{u},\vecb{v})+((\vecb{u}\cdot\nabla)\vecb{u},\vecb{v})+(\nu \nabla \vecb{u},\nabla \vecb{v}) - (\mathrm{div} (\vecb{v}),p) & = & (\vecb{f},\vecb{v}) & \forall\ \vecb{v}\in \vecb{V},\\
(\mathrm{div} (\vecb{u}),q) & = & 0 &  \forall\ q\in Q,
\end{array}
\end{align}
and $\vecb{u}(0)=\vecb{u}^0$.
Here, $\vecb{H}_0^1(\Omega):=[H_0^1(\Omega)]^d$ with $H_0^1(\Omega)$ being the
Sobolev space of functions in $H^1(\Omega)$ with zero trace along $\partial \Omega$.
The space $ L_0^2(\Omega)$ collects all functions in $L^2(\Omega)$ with zero mean,
and $(\bullet,\bullet)$ denotes the usual $L^2$ inner product.

Developing physically consistent schemes for \eqref{eq:navierstokes} or
\eqref{eq:navierstokes_weak}, in the meanings described next, is a challenging topic,
since there are several invariant structural (physical) properties or balance laws
to be taken into account.
They comprise the pointwise conservation of mass
(i.e., the divergence-free property of the velocity), and
the balance laws of kinetic energy, linear momentum, angular momentum,
enstrophy, vorticity and helicity \cite{Rebholz2010,Evans2013b,Rebholz2017}.
Moreover, there is an invariance property that the velocity field is independent
from any gradient field force \cite{Linke2014on,LM2016,JLMNR:2017}.
All these properties are considered to be crucial in designing physically consistent
numerical schemes.
Simultaneously, from the mathematical point of view, one has to consider discrete inf-sup stability and
the continuity requirement when choosing discrete space pairs
$(\vecb{V}_h, Q_h)$. Combining the physical and mathematical requests
is a challenging endeavor, since it is well known that some aspects of the physical consistency
and the satisfaction of the discrete inf-sup condition have competing requirements.
For some remarkable explorations in this regard, we refer to the
divergence-free finite element methods \cite{Zhang:2005,CKS2007,Zhang:2011,Zhang:2011b,Evans2013b,GN:2014,GN2018,GS2018},
pressure-robust reconstruction methods \cite{Linke2014on,LMT2016,LM:2016,LLMS2017,ALM2018},
EMAC formulation \cite{Rebholz2017,Rebholz2020,EMAC2019}, and other structure-preserving
methods such as \cite{Abramov2003,Rebholz2007,Palha2017,ABN2021}.

Among the physical properties mentioned above, the preservation of mass is strongly
related to the other properties. On the one hand, it has been shown in
\cite{Evans2013b,Rebholz2017} that $\vecb{H}^1$-conforming divergence-free
methods preserve proper balance of kinetic energy, linear momentum and
angular momentum in some appropriate sense. Similar properties for
$\vecb{H}(\operatorname{div})$-conforming divergence-free discontinuous
Galerkin (DG) methods can be also found in \cite{Chen2021}, except that
there is some artificial dissipation if upwind fluxes are chosen, which
modifies the balance of energy. However, it is also well-known that the
upwind fluxes can have even much better performance than central fluxes
with respect to the convergence order \cite{Guzman2017,HAN2021113365}.
Also the  pressure-robustness property, which means that the velocity error
is independent of the pressure, is usually ensured by a divergence-free
method, if no consistency errors arise from the inner product
with the force term \cite{JLMNR:2017}. Finally, the
divergence-free property is also related to another important property
called $\mathrm{Re}$-semi-robustness or convection-robustness \cite{Schroeder_2018,GJN21},
which means that the constants (including the Gronwall constant)
in the error estimates of the kinetic energy do not depend on the inverse of
the viscosity explicitly. It was shown in \cite{Schroeder_2018,GJN21} that a
large class of divergence-free $\vecb{H}^1$-conforming and
$\vecb{H}(\operatorname{div})$-conforming methods is convection-robust
with convergence order $k$, where $k$ is the order of velocity space.

The starting point of this paper is the family of divergence-free elements
designed in \cite{li2021low,JLMR2022} for the incompressible Stokes problem.
This family is easy to implement, divergence-free and inf-sup stable
on general shape-regular simplicial meshes. The main idea
is to employ
$\vecb{H}(\operatorname{div})$-conforming Raviart--Thomas bubbles to enrich
the generally non-inf-sup stable Scott--Vogelius finite element pair
$\vecb{P}_k-P_{k-1}^{\mathrm{disc}}$. Here,
$\vecb{P}_k$ denotes the vector-valued space of continuous piecewise
polynomials of order $k$ and $P_{k-1}^{\mathrm{disc}}$ denotes the scalar
spaces of discontinuous piecewise polynomials of order $k-1$.

Thus, the enriched velocity space consists of a classical $\vecb{H}^1$-conforming
part and a (small) $\vecb{H}(\operatorname{div})$-conforming part.
It combines the advantages
of divergence-free $\vecb{H}^1$-conforming and
$\vecb{H}(\operatorname{div})$-conforming methods: compared to the pure
divergence-free $\vecb{H}^1$-conforming methods on general meshes
\cite{GN:2014,GN2018, SH2019}, the relaxation of the continuity
requirement
of the bubble part allows a much simpler construction and implementation; opposite to
pure $\vecb{H}(\operatorname{div})$-conforming schemes
\cite{CKS2007, Wang2007,Anaya_2019}, the new formulation
only consists of volume integrals, and for $k\geq d$ the scheme is
parameter-free for the Stokes problem. Moreover, similarly to the HDG schemes
\cite{Lehrenfeld2010,Lehrenfeld_2016,LLS2018},
the proposed methods can be reduced to a $\vecb{P}_k-P_0$ problem via
static condensation, for any $k\ge 1$,
in this way decreasing the dimension of the global problem notably.

The goal of the present paper consists in extending the methods from
\cite{li2021low,JLMR2022} from the steady-state Stokes equations to the transient Navier--Stokes equations,
with particular consideration of the above mentioned  physical properties.
It will be shown that the suggested schemes have similar properties as
pure divergence-free $\vecb{H}^1$-conforming or
$\vecb{H}(\operatorname{div})$-conforming methods in the sense
that they preserve linear momentum and angular momentum, satisfy
pressure-robustness and they are convection-robust. In addition, they
maintain most of the particular
advantages mentioned before for the Stokes case.

The main difficulty for achieving all these favorable properties is the discretization of the
convection term, which requires a careful design. On the one hand, since
$\vecb{H}^1$-conforming elements are also
$\vecb{H}(\operatorname{div})$-conforming, the upwind
$\vecb{H}(\operatorname{div})$-conforming DG formulation is one possible
choice. On the other hand, if the convective term is treated implicitly, the
face integrals of the DG upwinding increase the coupling of degrees of freedom,
which compromises some of the original advantages and motivations in
\cite{li2021low,JLMR2022}.
Another possible choice is motivated by \cite{LR2021NS},
where a structure-preserving convective formulation was proposed for
the pressure-robust reconstruction schemes of \cite{LM:2016}.
A similar formulation is proposed here, which consists of
volume integrals only. We like to emphasize that there is a fundamental
difference between the method studied here and the one in \cite{LM:2016,LR2021NS}. The former
is indeed a nonconforming divergence-free method and the latter one is a
conforming, but non-divergence-free method with reconstruction.
In the present paper both choices for the convection term
are analyzed. The analysis of the convection form inspired by
\cite{LR2021NS} suggests to add one of two stabilizations for the
$\vecb{H}(\operatorname{div})$-conforming part to improve the error bounds.
One of them is proposed in \cite{LR2021NS}, and interestingly,
for $k\geq d$, the other one is
similar to a grad-div stabilization \cite{Olshanskii2002,Olshanskii2004,CELR2011,DGJN18}
despite the different purpose.

Finally, it should be stressed that compared to the method in
\cite{LR2021NS}, the novel methods possess some features that are of particular
interest in practice. They compute an exactly divergence-free velocity solution,
which means that the mass is conserved pointwise.
Moreover, the method in \cite{LR2021NS} uses classical
pressure-discontinuous Stokes elements whose bubbles are polynomials
of higher order, that require higher order quadrature rules especially
in three dimensions. The methods suggested in the present paper keep the polynomial
order $k$ for all ansatz functions in any dimensions.
Most importantly, these methods
are able to be reduced to a $\vecb{P}_k-P_0$ system due to the special
construction of the enrichment and divergence constraint, so that the dimension of the global
problem is reduced notably.

\medskip
The remainder of the paper is organized as follows. Section~\ref{sec:preliminaries}
introduces the notation and describes the involved finite element spaces.
Section~\ref{sec:discretization}
presents and discusses two discrete formulations of the Navier--Stokes problem
and analyzes conservation or balance properties for
kinetic energy, linear momentum, and angular momentum.
A pressure-robust and convection-robust error estimate for the
time-continuous discrete schemes is given in Section~\ref{sec:erroranalysis}.
Section~\ref{sec:reduced_scheme} discusses the possibility to reduce
the scheme to a $\vecb{P}_k-P_0$ system. Section~\ref{sec:num} reports on some numerical
studies that verify the convection-robust convergence order
and illustrate the overall performance of the proposed methods
in some benchmark problems, such as the
classical Kelvin--Helmholtz instability. Section~\ref{sec:conclusions}
draws some conclusions and gives an outlook on aspects that deserve
further attention in the future.

\section{Preliminaries}
\label{sec:preliminaries}
This section introduces the notation and recalls the main ideas
of the proposed Raviart--Thomas enrichment spaces
for the Scott--Vogelius finite element pairs from \cite{li2021low,JLMR2022}.

\subsection{Notation}
\label{sec:notation}
Consider a regular triangulation into simplices
\(\mathcal{T}\) of the domain \(\Omega\)
with nodes \(\mathcal{N}\) and facets \(\mathcal{F}\).
The subset $\mathcal{F}^0$ denotes all interior faces.
The diameter of an element $T\in \mathcal{T}$
is denoted by $h_T$
and gives rise to the local mesh-width function
$h_\mathcal{T}$ via $h_\mathcal{T}|_T := h_T$ for all $T \in \mathcal{T}$.
The maximum mesh-width is given by $h:=\max_{T\in\mathcal{T}}h_T$.
The vector $\vecb{n}_T$ defines
the outer unit normal vector along the boundary $\partial T$ of a simplex
$T \in \mathcal{T}$.
On a face $F$, the notation $\jump{\bullet}$ denotes the jump of $\bullet$
and $\average{\bullet}$ denotes its average value.

On a subdomain $\omega$, the space of all scalar-valued polynomials of order
$k$ on $\omega$ is denoted by \(P_k(\omega)\) and is written in bold, i.e.,
\(\vecb{P}_k(\omega)\), in case of vector-valued polynomials.
Piecewise continuous and discontinuous
polynomial spaces with respect to the triangulation are given by
\begin{align*}
{P}_k(\mathcal{T}) &:=\left\{q_h\in H^1(\Omega): q_h|_T\in P_k(T) \text{ for all } T\in \mathcal{T}\right\},\\
{{P}}_k^{\mathrm{disc}}(\mathcal{T}) &:=\left\{q_h\in L^2(\Omega): q_h|_T\in P_k(T) \text{ for all } T\in \mathcal{T}\right\}.
\end{align*}

The suggested enrichment relies on specially chosen Raviart--Thomas function.
The space of all Raviart--Thomas functions of order $k$ on a cell $T \in \mathcal{T}$
is given by
\begin{align*}
  \vecb{RT}_{k}(T) := \left\lbrace \vecb{v} \in \vecb{L}^2 (T) :\, \exists \vecb{p} \in \vecb{P}_k(T), q \in {P}_k(T), \ \vecb{v}|_T(\vecb{x}) = \vecb{p}(\vecb{x}) + q(\vecb{x}) \vecb{x} \right\rbrace.
\end{align*}
Their $H(\mathrm{div})$-conforming combinations
define the global space
\begin{align*}
  \vecb{RT}_{k}(\mathcal{T}) := \left\lbrace \vecb{v} \in \vecb{H}(\mathrm{div},\Omega) : \forall T \in \mathcal{T} \, \vecb{v}|_T \in \vecb{RT}_{k}(T) \right\rbrace.
\end{align*}
The subspace of interior Raviart--Thomas bubble functions reads
\begin{align*}
\vecb{RT}_k^{\mathrm{int}}(\mathcal{T}):=\left\lbrace\vecb{v}\in\vecb{RT}_k(\mathcal{T}): \vecb{v}\cdot\vecb{n}_T|_{\partial T}=0 \text{ for all } T\in\mathcal{T}\right\rbrace.
\end{align*}
This space can be further decomposed into
\begin{align*}
\vecb{RT}_k^{\mathrm{int}}(\mathcal{T})=\vecb{RT}_{k,0}^{\mathrm{int}}(\mathcal{T})\oplus \widetilde{\vecb{RT}}_k^{\mathrm{int}}(\mathcal{T}),
\end{align*}
where the first part consists of only divergence-free functions
and the second part $\widetilde{\vecb{RT}}_k^{\mathrm{int}}(\mathcal{T})$
is its arbitrary but fixed complement space.
Then, since the only divergence-free function in
$\widetilde{\vecb{RT}}_k^{\mathrm{int}}(\mathcal{T})$
is the zero function, the divergence operator on this space is injective
and allows the following estimate under a mild requirement in Remark~\ref{rem:tildert_unique}.
\begin{lemma}[\cite{JLMR2022}]\label{lem:bound_vhr}
  For any $\vecb{v}_h \in \widetilde{\vecb{RT}}_k^{\mathrm{int}}(\mathcal{T})$, there holds
  the inequality
      \begin{equation}\label{ieq:L2_L2div}
      \left\|\vecb{v}_h\right\|_{\vecb{L}^2(T)}\leq C h_{T} \left\|\mathrm{div}\left(\vecb{v}_h\right)\right\|_{L^2(T)}\quad \text{for all } T\in \mathcal{T}.
      \end{equation}
  \end{lemma}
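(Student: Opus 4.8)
The plan is to reduce the claimed, mesh-dependent inequality to a single scaling argument on a fixed reference simplex. First I would exploit that \eqref{ieq:L2_L2div} is an element-wise statement. Because every interior bubble satisfies $\vecb{v}\cdot\vecb{n}_T|_{\partial T}=0$, the functions in $\vecb{RT}_k^{\mathrm{int}}(\T)$ decouple across elements, so that $\vecb{RT}_k^{\mathrm{int}}(\T)=\bigoplus_{T\in\T}\vecb{RT}_k^{\mathrm{int}}(T)$. Under the mild requirement of Remark~\ref{rem:tildert_unique}, the complement $\widetilde{\vecb{RT}}_k^{\mathrm{int}}(\T)$ is chosen to respect this splitting, with the local complement on each $T$ obtained as the contravariant Piola push-forward of one fixed reference complement on the reference simplex $\widehat T$. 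Hence it suffices to fix a single $T$, regard $\vecb{v}_h|_T$ as an element of this local complement space, and note that $\mathrm{div}$ is injective there, since every divergence-free interior bubble has been removed by construction.

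On $\widehat T$ I would first establish the reference version of the estimate. On the fixed, finite-dimensional reference complement space the injectivity of $\widehat{\mathrm{div}}$ means that $\hat{\vecb{v}}\mapsto\LZNorm{\widehat{\mathrm{div}}\,\hat{\vecb{v}}}{\widehat T}$ is a genuine norm; by equivalence of all norms on a finite-dimensional space there is a constant $\widehat C$, depending only on $\widehat T$ and $k$, with $\LZNorm{\hat{\vecb{v}}}{\widehat T}\le\widehat C\,\LZNorm{\widehat{\mathrm{div}}\,\hat{\vecb{v}}}{\widehat T}$. To transfer this to $T$, I would use the affine map $F_T(\hat{\vecb{x}})=B_T\hat{\vecb{x}}+\vecb{b}_T$ and the contravariant Piola transform $\vecb{v}=(\det B_T)^{-1}B_T\,\hat{\vecb{v}}\circ F_T^{-1}$, which maps $\vecb{RT}_k(\widehat T)$ onto $\vecb{RT}_k(T)$, preserves the vanishing normal trace, and satisfies the key identity $\mathrm{div}(\vecb{v})=(\det B_T)^{-1}\,\widehat{\mathrm{div}}(\hat{\vecb{v}})\circ F_T^{-1}$. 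Tracking the Jacobian factors in the two changes of variables, the $|\det B_T|$ contributions cancel and one is left with $\LZNorm{\vecb{v}_h}{T}\le\widehat C\,\|B_T\|\,\LZNorm{\mathrm{div}(\vecb{v}_h)}{T}$. The shape-regularity of $\T$ gives $\|B_T\|\le C_{\mathrm{reg}}h_T$, which yields \eqref{ieq:L2_L2div} with $C=\widehat C\,C_{\mathrm{reg}}$ independent of $T$.

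The main obstacle is not any single estimate but the bookkeeping that makes the reference constant $\widehat C$ genuinely uniform over the whole family $\{T\in\T\}$. This forces the complement $\widetilde{\vecb{RT}}_k^{\mathrm{int}}(\T)$ to be transported from one fixed reference complement by the Piola map, rather than chosen independently on each element; this is precisely the content of the mild requirement in Remark~\ref{rem:tildert_unique}, without which the injectivity of $\mathrm{div}$ could degenerate across the mesh and no uniform constant would exist. Once this consistency is in place, the norm-equivalence on $\widehat T$ and the Piola scaling are routine, and shape-regularity supplies the single factor $h_T$.
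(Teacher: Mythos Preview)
Your argument is correct and is exactly the standard scaling proof one expects for this type of estimate. Note, however, that the present paper does not give its own proof of this lemma at all: it is simply quoted from \cite{JLMR2022}, so there is nothing in the paper to compare against beyond the citation. Your reduction to a fixed reference complement via the contravariant Piola transform, combined with the finite-dimensional norm equivalence coming from the injectivity of the divergence on $\widetilde{\vecb{RT}}_k^{\mathrm{int}}(\widehat T)$, is precisely the intended mechanism, and your observation that Remark~\ref{rem:tildert_unique} is what guarantees a single reference constant $\widehat C$ uniform over all $T\in\mathcal T$ is the essential point.
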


\begin{remark}
\label{rem:tildert_unique}
In general, the space $\widetilde{\vecb{RT}}_k^{\mathrm{int}}(\mathcal{T})$
is not unique for $k>1$. We only require that it breaks into local spaces
$\widetilde{\vecb{RT}}_k^{\mathrm{int}}(T), T\in \mathcal{T}$,
that have the same structure in the sense that all of them are connected
to the same reference space via Piola's transformation
(see e.g.\ \cite[Eq.~2.1.69]{BBF:book:2013}).
Since these spaces are characterized by the normal trace and the divergence,
which are preserved (in a scaled meaning) by Piola's transformation,
this requirement is natural.
\end{remark}

Furthermore, the subspace of elementwise zero-mean functions in $P_{k}^{\mathrm{disc}}(\mathcal{T})$
reads
\begin{align}\label{eq:def_tildepk_disc}
\widetilde{P}_{k}^{\mathrm{disc}}(\mathcal{T}):=\left\{q_h\in {P}_{k}^{\mathrm{disc}}(\mathcal{T}): (q_h, 1)_T=0 \text{ for all } T\in\mathcal{T}\right\}.
\end{align}
For $k=0$, one obtains $\vecb{RT}_0^{\mathrm{int}}(\mathcal{T})=\widetilde{\vecb{RT}}_0^{\mathrm{int}}(\mathcal{T})=\left\{\vecb{0}\right\}$ and $\widetilde{P}_{0}^{\mathrm{disc}}(\mathcal{T})=\lbrace 0 \rbrace$. It also holds $\vecb{RT}_1^{\mathrm{int}}(\mathcal{T})=\widetilde{\vecb{RT}}_1^{\mathrm{int}}(\mathcal{T})$ because there is no divergence-free interior bubble in $\vecb{RT}_1$.

\smallskip
Throughout this paper, for any (scalar or vector-valued) finite element space $\mathcal{S}$
(with or without an argument like $\mathcal{T}$), its local version on each element $T$
is denoted by $\mathcal{S}(T)$ if not specially indicated.
The symbol $\pi_\mathcal{S}$ (or $\pi_{\mathcal{S}(T)}$)
denotes the $L^2$ projection operator onto $\mathcal{S}$
(or $\mathcal{S}(T)$, respectively).
$\|\bullet\|_{W^{m,p}(\omega)}$
is used to denote the $W^{m,p}$ Sobolev norm of
$\bullet$ on the domain $\omega$. By convention, $\omega$ is
omitted if $\omega=\Omega$, and the $L^2$ norm of $\bullet$
is then simply denoted by $\|\bullet\|$.

\subsection{Raviart--Thomas enriched Scott--Vogelius finite element pair}
\label{sec:enrichment_principle}
For $k \geq 1$, consider
the $\vecb{H}^1$-conforming velocity ansatz
space of piecewise vector-valued polynomials
\begin{align*}
	\vecb{V}_h^{\mathrm{ct}} := \vecb{P}_k(\mathcal{T}) \cap \vecb{V}
\end{align*}
and the desired pressure space
\begin{align*}
  Q_h := {P}_{k-1}^{\mathrm{disc}}(\mathcal{T}) \cap Q.
\end{align*}
These are the ansatz spaces for the classical
Scott--Vogelius finite element method which are known to be not inf-sup stable
in general. The main idea of \cite{li2021low,JLMR2022} is to enrich
the velocity spaces by some specially chosen Raviart--Thomas functions collected in the space $\vecb{V}_h^\mathrm{R}$.
The characteristic property of $\vecb{V}_h^\mathrm{R}$ is that
\begin{align*}
\mathrm{div}(\vecb{V}_h^\mathrm{R})\oplus_{L^2} \widehat{Q}_h=Q_h
\end{align*}
such that $\vecb{V}_h^\mathrm{ct}\times \widehat{Q}_h$ is inf-sup stable for some subspace $\widehat{Q}_h\subseteq Q_h$. The choice of $\widehat{Q}_h$ is not unique in general. Suggested by \cite{JLMR2022}, well-known inf-sup stability results from literature \cite{BBF:book:2013,Joh16} allow to use $\widehat{Q}_h=P_{k-d}^\mathrm{disc}(\mathcal{T})\cap Q$ for $k\geq d$ and $\widehat{Q}_h=\{0\}$ for $k<d$.
The corresponding Raviart--Thomas enrichment space $\vecb{V}_h^\mathrm{R}$ reads
\begin{align}\label{eq:def_vhr}
\begin{aligned}
\vecb{V}_h^\mathrm{R}=\begin{cases}
\vecb{RT}_{0}(\mathcal{T}) \cap \vecb{H}_0(\mathrm{div},\Omega) \quad & k=1,\\
(\vecb{RT}_{0}(\mathcal{T}) \cap \vecb{H}_0(\mathrm{div},\Omega)) \oplus \vecb{RT}_{1}^{\text{int}}(\mathcal{T})\quad & k=2, d=3,\\
\left\{\vecb{v}_{h}\in\widetilde{\vecb{RT}}_{k-1}^{\text{int}}(\mathcal{T}): \mathrm{div}(\vecb{v}_h)\in \widehat{Q}_h^\perp\right\}\quad & k\geq d,
\end{cases}
\end{aligned}
\end{align}
where
\begin{align*}
\vecb{H}_0(\mathrm{div},\Omega)):=\left\{\vecb{v}\in \vecb{H}(\mathrm{div},\Omega)): \vecb{v}\cdot\vecb{n}=0 \text{ on } \partial\Omega \right\}.
\end{align*}
Some explicit basis functions for \eqref{eq:def_vhr} can be found in
\cite{JLMR2022}. Later, for the construction of the reduced scheme
in Section~\ref{sec:reduced_scheme}, for all $k\geq d$ we choose $\widehat{Q}_h=P_0^\mathrm{disc}(\mathcal{T})$, which leads to a larger enrichment space $\vecb{V}_h^\mathrm{R}$ for $k>d$ but allows for some procedure that results in a much smaller system.

Define $\vecb{V}_h:=\vecb{V}_h^\mathrm{ct}\times \vecb{V}_h^\mathrm{R}$ and
$\vecb{V}(h):=\vecb{V}\times \vecb{V}_h^\mathrm{R}$. Throughout the paper,
the superscripts `$\mathrm{ct}$', `$\mathrm{R}$', and `$\mathrm{s}$' are
employed for $\vecb{v}\in \vecb{V}(h)$ in the following way:
$\vecb{v}^\mathrm{ct}\in \vecb{V}$ and $\vecb{v}^\mathrm{R}\in \vecb{V}_h^\mathrm{R}$
denote the $\vecb{H}^1$-conforming component and $\vecb{H}(\mathrm{div})$-conforming
component of $\vecb{v} = (\vecb{v}^\mathrm{ct}, \vecb{v}^\mathrm{R})$, and
$\vecb{v}^\mathrm{s}\in \vecb{V}+ \vecb{V}_h^\mathrm{R}$ denotes the
summation of both components, i.e., $\vecb{v}^\mathrm{s}:=\vecb{v}^\mathrm{ct}+\vecb{v}^\mathrm{R}$
which lives in $\vecb{H}(\mathrm{div}, \Omega)$ only.
Moreover, any
$\vecb{v}^\mathrm{R}\in \vecb{V}_h^\mathrm{R}$ can be split into
\begin{align*}
\vecb{v}^\mathrm{R}=:\vecb{v}^\mathrm{RT_0}+\widetilde{\vecb{v}}^\mathrm{R}
=\sum_{F\in\mathcal{F}^0}\mathrm{dof}_F(\vecb{v}_h^{\mathrm{RT}_0})
\vecb{\psi}_F+\widetilde{\vecb{v}}_h^{\mathrm{R}}\in
\vecb{RT}_0(\mathcal{T})\oplus\widetilde{\vecb{RT}}_{k-1}^\mathrm{int}(\mathcal{T}),
\end{align*}
where $\vecb{\psi}_F, F\in\mathcal{F}^0$, are the basis functions
of $\vecb{RT}_0(\mathcal{T})\cap \vecb{H}_0(\mathrm{div},\Omega)$, and
$\mathrm{dof}_F: \vecb{RT}_0(\mathcal{T})\rightarrow \mathbb{R}$
represents the degree of freedom functionals corresponding to $\vecb{\psi}_F$.
Note, that for $k\geq d$, no lowest-order Raviart--Thomas functions are involved
and therefore it holds
$\vecb{v}^\mathrm{RT_0}=\vecb{0}$ for all $\vecb{v}^\mathrm{R}\in \vecb{V}_h^\mathrm{R}$.

The divergence-free subspaces of $\vecb{V}$, $\vecb{V}(h)$, and $\vecb{V}_h$ are defined as
\begin{align*}
\vecb{Z}    & :=\left\{\vecb{v}\in \vecb{V}: \mathrm{div}(\vecb{v})=0\right\},\\
\vecb{Z}(h) & :=\left\{\vecb{v}\in \vecb{V}(h): \mathrm{div}(\vecb{v}^\mathrm{s})=0\right\}, \text{ and}\\
\vecb{Z}_h  & :=\left\{\vecb{v}_h\in \vecb{V}_h: \mathrm{div}(\vecb{v}_h^\mathrm{s})=0\right\}, \text{ respectively.}
\end{align*}

\section{Discretization of the Navier--Stokes equations}
\label{sec:discretization}
This section discusses the extension of the enrichment strategy
from the previous section for the Stokes problem to the full Navier--Stokes
problem and some direct structural properties.

\subsection{Discretization of the linear parts}
The discrete counterpart of \eqref{eq:navierstokes_weak} applies the same bilinear forms
used to discretize the stationary Stokes problem in \cite{li2021low,JLMR2022}, i.e.,
\begin{align*}
  a_h(\vecb{u},\vecb{v})
  := (\nabla\vecb{u}^{\mathrm{ct}}, \nabla\vecb{v}^{\mathrm{ct}})
     - (\Delta_\text{pw} \vecb{u}^{\mathrm{ct}}, \vecb{v}^{\mathrm{R}})
     + (\Delta_\text{pw} \vecb{v}^{\mathrm{ct}}, \vecb{u}^{\mathrm{R}})
     + a_h^\mathrm{D}(\vecb{u}^{\mathrm{RT}_0}, \vecb{v}^{\mathrm{RT}_0}),
\end{align*}
where $\Delta_\text{pw}$ is the piecewise Laplacian operator, and
\begin{align*}
  b(\vecb{v},q) := -(\mathrm{div}(\vecb{v}^{\mathrm{s}}), q),
\end{align*}
and
\begin{align*}
a_h^\mathrm{D}(\vecb{u}^{\mathrm{RT}_0}, \vecb{v}^{\mathrm{RT}_0}):=\alpha \sum_{F\in\mathcal{F}^0}\mathrm{dof}_F(\vecb{u}_h^{\mathrm{RT}_0}) \mathrm{dof}_F(\vecb{v}_h^{\mathrm{RT}_0}) \, (\mathrm{div} \vecb{\psi}_F, \mathrm{div} \vecb{\psi}_F),
\end{align*}
with a positive parameter $\alpha$, which is a stabilization only
needed for $k < d$.
Note that the coefficient matrix related to $a_h^\mathrm{D}$ is diagonal.
Also recall, that $|||\bullet|||^2:=a_h(\bullet,\bullet)$ induces a seminorm $|||\bullet|||$ on $\vecb{V}(h)$.
For the time derivative, we employ
\begin{align*}
d_h(\vecb{u},\vecb{v}):=\left(\vecb{u}^\mathrm{s},\vecb{v}^\mathrm{s}\right).
\end{align*}

\subsection{Discretization of the nonlinear terms}
For the discretization of the nonlinear term, which is of central importance in
Navier--Stokes simulations, consider the following nonlinear forms
\begin{align*}
c(\widehat{\vecb{w}},\widehat{\vecb{u}},\widehat{\vecb{v}}):=&((\widehat{\vecb{w}}\cdot\nabla_h)\widehat{\vecb{u}},\widehat{\vecb{v}}) \quad \text{for all } \widehat{\vecb{w}},\widehat{\vecb{u}},\widehat{\vecb{v}}\in \vecb{L}^2(\Omega),\\
  c_h^\mathrm{vol}(\vecb{w},\vecb{u},\vecb{v}) := & c(\vecb{w}^\mathrm{s},\vecb{u}^\mathrm{ct},\vecb{v}^\mathrm{s})-
  c(\vecb{w}^\mathrm{s},\vecb{v}^\mathrm{ct},\vecb{u}^\mathrm{R})\\
                                                = & c(\vecb{w}^\mathrm{s},\vecb{u}^\mathrm{ct},\vecb{v}^\mathrm{ct})+
  c(\vecb{w}^\mathrm{s},\vecb{u}^\mathrm{ct},\vecb{v}^\mathrm{R})-
  c(\vecb{w}^\mathrm{s},\vecb{v}^\mathrm{ct},\vecb{u}^\mathrm{R}),\\
  c_h^\mathrm{R}(\vecb{w},\vecb{u},\vecb{v})   := & c(\vecb{w}^\mathrm{s},\vecb{u}^\mathrm{R},\vecb{v}^\mathrm{R})-
  \sum_{F\in\mathcal{F}^0}\int_F(\vecb{w}^\mathrm{s}\cdot \vecb{n})\jump{\vecb{u}^\mathrm{R}}\cdot\average{\vecb{v}^\mathrm{R}}\ds,\\
  c_h^\mathrm{uw}(\vecb{w},\vecb{u},\vecb{v})  := & \frac{1}{2}\sum_{F\in\mathcal{F}^0}\int_F\abs{\vecb{w}^\mathrm{s}\cdot \vecb{n}}\jump{\vecb{u}^\mathrm{R}}\cdot\jump{\vecb{v}^\mathrm{R}}\ds,\\
\hspace{-5cm}\text{and}\hspace{3cm}&\\
  c_h^\mathrm{dG}(\vecb{w},\vecb{u},\vecb{v})  := & c(\vecb{w}^\mathrm{s},\vecb{u}^\mathrm{s},\vecb{v}^\mathrm{s}) -
  \sum_{F\in\mathcal{F}^0}\int_F(\vecb{w}^\mathrm{s}\cdot \vecb{n})\jump{\vecb{u}^\mathrm{R}}\cdot\average{\vecb{v}^\mathrm{s}}\ds
\end{align*}
for all $\vecb{u},\vecb{v},\vecb{w}\in \vecb{V}(h)$, where $\nabla_h$ is the piecewise gradient operator, $c_h^\mathrm{vol}$ is inspired by \cite{LR2021NS}, and $c_h^\mathrm{R}$, $c_h^\mathrm{uw}$ (upwind stabilization) and $c_h^\mathrm{dG}$ are inspired
by $\vecb{H}(\mathrm{div})$-conforming discontinuous Galerkin methods \cite{Guzman2017,Schroeder_2018}.

\begin{remark}[Relationship between $c_h^\mathrm{vol}$, $c_h^\mathrm{R}$ and $c_h^\mathrm{dG}$]
  Note that by continuity, it holds $\jump{\vecb{u}^\mathrm{ct}}=0$.
By an integration by parts, one has
\begin{align*}
c_h^\mathrm{vol}(\vecb{w},\vecb{u},\vecb{v})
:=  c(\vecb{w}^\mathrm{s},\vecb{u}^\mathrm{ct},\vecb{v}^\mathrm{s})
+ c(\vecb{w}^\mathrm{s},\vecb{u}^\mathrm{R},\vecb{v}^\mathrm{ct})
- \!\! \sum_{F\in\mathcal{F}^0}\int_{F}(\vecb{w}^\mathrm{s}\cdot\vecb{n})\jump{\vecb{u}^\mathrm{R}}\average{\vecb{v}^\mathrm{ct}}\ds.
\end{align*}
Then it is not very hard to verify that
\begin{align}\label{eq:rela_dGandvol}
c_h^\mathrm{dG}(\vecb{w},\vecb{u},\vecb{v}) =  c_h^\mathrm{vol}(\vecb{w},\vecb{u},\vecb{v}) + c_h^\mathrm{R}(\vecb{w},\vecb{u},\vecb{v}).
\end{align}
In a sense, $c_h^\mathrm{vol}$ can be regarded as an incomplete discontinuous Galerkin $\vecb{H}(\mathrm{div})$-conforming formulation for nonlinear terms, which consists of volume integrals only.
\end{remark}

\subsection{Stabilizations}
For the enrichment part the following two stabilizations are considered
\begin{align*}
\mathcal{S}_1(\vecb{u},\vecb{v}): = (\partial_t \vecb{u}^\mathrm{R},\vecb{v}^\mathrm{R})
\quad \text{and} \quad
\mathcal{S}_2(\vecb{u},\vecb{v}): = (h_\mathcal{T}^{-1}\vecb{u}^\mathrm{R},\vecb{v}^\mathrm{R}).
\end{align*}
Note that by \eqref{ieq:L2_L2div} for $k\geq d$ the second stabilization is equivalent to
\begin{align*}
\mathcal{S}_2(\vecb{u},\vecb{v})
\approx (h_\mathcal{T} \mathrm{div} \vecb{u}^\mathrm{R}, \mathrm{div} \vecb{v}^\mathrm{R})
= (h_\mathcal{T} \mathrm{div} \vecb{u}^\mathrm{ct}, \mathrm{div} \vecb{v}^\mathrm{ct})
\end{align*}
which can be seen as a grad-div stabilization of the $\vecb{H}^1$-conforming part. However, this one here scales with $h$ and thus
is weaker than the usual one used for non-divergence-free methods like Taylor--Hood \cite{DGJN18}. One should note that the grad-div-like stabilization here plays a different role than the grad-div stabilization for the Taylor--Hood element: the former is introduced to stabilize the nonconforming part, while the latter is used to improve the mass conservation of the discrete velocity solution. Opposite to the Taylor--Hood method, the present scheme is always pressure-robust and divergence-free.
\begin{remark}[Connection between $\mathcal{S}_1$ and $\mathcal{S}_2$]
We use backward Euler time stepping as an example.
Denote by $\Delta t$ the length of the time steps.
Then a corresponding discretization for $\mathcal{S}_1$ looks like
\begin{align*}\mathcal{S}_1^\mathrm{d}(\vecb{u}_h,\vecb{v}_h):=\Delta t^\mathrm{-1}\left((\vecb{u}_h^{\mathrm{R},n+1},\vecb{v}_h)-(\vecb{u}_h^{\mathrm{R},n},\vecb{v}_h)\right).\end{align*} The $\vecb{u}_h^{\mathrm{R},n}$
part should be shifted to the right-hand side in the computation.
If we ignore the right-hand side part and suppose $\Delta t \approx h$,
one can see that there is some similarity between $\mathcal{S}_1$ and $\mathcal{S}_2$.
\end{remark}

\subsection{Discretization schemes}
This paper investigates two discretization variants for
\eqref{eq:navierstokes} or \eqref{eq:navierstokes_weak}.

\smallskip
The first one employs the DG upwind discretization for the nonlinear term: Find $(\vecb{u}_h,p_h): (0, T]\rightarrow \vecb{V}_h\times Q_h$ such that
\begin{equation}\label{eq:semischeme1}
\begin{aligned}
d_h(\partial_t\vecb{u}_{h},\vecb{v}_h)
+c_h^\mathrm{dG}(\vecb{u}_h,\vecb{u}_h,\vecb{v}_h)
+c_h^\mathrm{uw}(\vecb{u}_h,\vecb{u}_h,\vecb{v}_h)\\
+\nu a_h(\vecb{u}_h,\vecb{v}_h) + b(\vecb{v}_h,p_h)
& = (\vecb{f},\vecb{v}_h^\mathrm{s}) && \text{for all } \vecb{v}_h\in \vecb{V}_h,\\
b(\vecb{u}_h,q_h) & = 0 &&  \text{for all } q_h\in Q_h,
\end{aligned}
\end{equation}
and $\vecb{u}_h(0)=\vecb{u}_h^0$ with $\vecb{u}_h^0$ being some
suitable approximation of $(\vecb{u}^0,\vecb{0})$.

\smallskip
The second discretization employs $c_h^\mathrm{vol}$
plus one of the two stabilizations
$\mathcal{S} \in \lbrace \mathcal{S}_1, \mathcal{S}_2\rbrace$:
Find $(\vecb{u}_h,p_h): (0, T]\rightarrow \vecb{V}_h\times Q_h$ such that
\begin{equation}\label{eq:semischeme2}
  \begin{aligned}
  d_h(\partial_t\vecb{u}_{h},\vecb{v}_h)
  +c_h^\mathrm{vol}(\vecb{u}_h,\vecb{u}_h,\vecb{v}_h)
  +\gamma\mathcal{S}(\vecb{u}_h,\vecb{v}_h)\\
  +\nu a_h(\vecb{u}_h,\vecb{v}_h) + b(\vecb{v}_h,p_h)
  & = (\vecb{f},\vecb{v}_h^\mathrm{s}) && \text{for all } \vecb{v}_h\in \vecb{V}_h,\\
  b(\vecb{u}_h,q_h) & = 0 && \text{for all } q_h\in Q_h,
  \end{aligned}
\end{equation}
and also $\vecb{u}_h(0)=\vecb{u}_h^0$ with $\vecb{u}_h^0$ being some suitable approximation of $(\vecb{u}^0,\vecb{0})$.
Here, $\gamma$ denotes some parameter to scale the stabilization.

By removing the Lagrange multiplier and
seeking the solution directly in the space of divergence-free functions,
both systems \eqref{eq:semischeme1} and \eqref{eq:semischeme2}
seek $\vecb{u}_h: (0, T]\rightarrow \vecb{Z}_h$ with
$\vecb{u}_h(0)=\vecb{u}_h^0$ such that
\begin{equation}\label{eq:semischeme1_divfree}
d_h(\partial_t\vecb{u}_{h},\vecb{v}_h)+c_h^\mathrm{dG}(\vecb{u}_h,\vecb{u}_h,\vecb{v}_h)+c_h^\mathrm{uw}(\vecb{u}_h,\vecb{u}_h,\vecb{v}_h)+\nu a_h(\vecb{u}_h,\vecb{v}_h)  =  (\vecb{f},\vecb{v}_h^\mathrm{s})
\end{equation}
and
\begin{align}\label{eq:semischeme2_divfree}
d_h(\partial_t\vecb{u}_{h},\vecb{v}_h)+c_h^\mathrm{vol}(\vecb{u}_h,\vecb{u}_h,\vecb{v}_h)+\gamma\mathcal{S}(\vecb{u}_h,\vecb{v}_h)+\nu a_h(\vecb{u}_h,\vecb{v}_h)  =  (\vecb{f},\vecb{v}_h^\mathrm{s})
\end{align}
for all $\vecb{v}_h\in\vecb{Z}_h$, respectively.

\subsection{EMA-conservation}
It has been shown in the paper \cite[Theorem 4]{Chen2021} that the
upwind DG formulation ($c_h^\mathrm{dG}+c_h^\mathrm{uw}$) is
momentum-conserving and angular momentum-conserving under the
assumption that all data is compactly supported
(see the assumption in Lemma~\ref{lem:MAconserving} below).
There it is also proven that the scheme with upwind DG formulation
is energy-stable. Although their analysis is based on the DG
formulation for the diffusion term, there is no essential
difference on EMA-conservation for our method. Thus this
subsection only discusses this aspect for the other scheme
\eqref{eq:semischeme2}.
\begin{lemma}\label{lem:skew-sym}
For any $(\vecb{u},\vecb{v},\vecb{w})\in \vecb{V}(h)\times \vecb{V}(h)\times \vecb{Z}(h)$, the trilinear form $c_h$ fulfills
\begin{align}\label{eq:skew-sym1}
c_h^\mathrm{vol}(\vecb{w},\vecb{u},\vecb{v})=-c_h^\mathrm{vol}(\vecb{w},\vecb{v},\vecb{u}).
\end{align}
\end{lemma}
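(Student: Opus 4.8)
The plan is to show that the part of $c_h^\mathrm{vol}(\vecb{w},\cdot,\cdot)$ that is symmetric in its last two slots vanishes whenever $\vecb{w}\in\vecb{Z}(h)$, i.e.\ whenever $\mathrm{div}(\vecb{w}^\mathrm{s})=0$. First I would add the defining expression
\[ c_h^\mathrm{vol}(\vecb{w},\vecb{u},\vecb{v}) = c(\vecb{w}^\mathrm{s},\vecb{u}^\mathrm{ct},\vecb{v}^\mathrm{s}) - c(\vecb{w}^\mathrm{s},\vecb{v}^\mathrm{ct},\vecb{u}^\mathrm{R}) \]
to the same expression with $\vecb{u}$ and $\vecb{v}$ interchanged. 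Splitting $\vecb{v}^\mathrm{s}=\vecb{v}^\mathrm{ct}+\vecb{v}^\mathrm{R}$ and $\vecb{u}^\mathrm{s}=\vecb{u}^\mathrm{ct}+\vecb{u}^\mathrm{R}$ in the two leading terms, the mixed $\mathrm{ct}$-$\mathrm{R}$ contributions $c(\vecb{w}^\mathrm{s},\vecb{u}^\mathrm{ct},\vecb{v}^\mathrm{R})$ and $c(\vecb{w}^\mathrm{s},\vecb{v}^\mathrm{ct},\vecb{u}^\mathrm{R})$ appear with opposite signs and cancel pairwise. This leaves only the purely $\vecb{H}^1$-conforming remainder, so that
\[ c_h^\mathrm{vol}(\vecb{w},\vecb{u},\vecb{v}) + c_h^\mathrm{vol}(\vecb{w},\vecb{v},\vecb{u}) = c(\vecb{w}^\mathrm{s},\vecb{u}^\mathrm{ct},\vecb{v}^\mathrm{ct}) + c(\vecb{w}^\mathrm{s},\vecb{v}^\mathrm{ct},\vecb{u}^\mathrm{ct}). \]

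The second step is to prove that this remaining sum is zero. Since $\vecb{u}^\mathrm{ct},\vecb{v}^\mathrm{ct}$ are continuous, the piecewise gradient coincides with the classical one, and the product rule $(\vecb{w}^\mathrm{s}\cdot\nabla)(\vecb{u}^\mathrm{ct}\cdot\vecb{v}^\mathrm{ct}) = ((\vecb{w}^\mathrm{s}\cdot\nabla)\vecb{u}^\mathrm{ct})\cdot\vecb{v}^\mathrm{ct} + \vecb{u}^\mathrm{ct}\cdot((\vecb{w}^\mathrm{s}\cdot\nabla)\vecb{v}^\mathrm{ct})$ shows the sum equals $\int_\Omega \vecb{w}^\mathrm{s}\cdot\nabla(\vecb{u}^\mathrm{ct}\cdot\vecb{v}^\mathrm{ct})\dx$. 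Applying the Green formula for the field $\vecb{w}^\mathrm{s}\in\vecb{H}(\mathrm{div},\Omega)$ tested against the scalar $H^1$-function $\phi:=\vecb{u}^\mathrm{ct}\cdot\vecb{v}^\mathrm{ct}$ gives
\[ \int_\Omega \vecb{w}^\mathrm{s}\cdot\nabla\phi\dx = -\int_\Omega \mathrm{div}(\vecb{w}^\mathrm{s})\,\phi\dx + \int_{\partial\Omega}(\vecb{w}^\mathrm{s}\cdot\vecb{n})\,\phi\ds. \]
The volume term drops out because $\mathrm{div}(\vecb{w}^\mathrm{s})=0$ for $\vecb{w}\in\vecb{Z}(h)$, and the boundary term drops out because $\vecb{u}^\mathrm{ct},\vecb{v}^\mathrm{ct}\in\vecb{V}=\vecb{H}_0^1(\Omega)$ have vanishing trace, whence $\phi|_{\partial\Omega}=0$. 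This yields $c_h^\mathrm{vol}(\vecb{w},\vecb{u},\vecb{v}) = -c_h^\mathrm{vol}(\vecb{w},\vecb{v},\vecb{u})$, as claimed.

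The only point that needs genuine care---and hence the main obstacle---is the integration by parts, because $\vecb{w}^\mathrm{s}$ is merely $\vecb{H}(\mathrm{div})$-conforming and can carry tangential jumps across interior faces. If one argues elementwise rather than invoking the global Green formula, each interior face $F\in\mathcal{F}^0$ collects two boundary contributions; I would observe that the normal flux $\vecb{w}^\mathrm{s}\cdot\vecb{n}$ is single-valued across $F$ (by $\vecb{H}(\mathrm{div})$-conformity) and that $\phi=\vecb{u}^\mathrm{ct}\cdot\vecb{v}^\mathrm{ct}$ is continuous (being a product of continuous functions), so that with the two opposite outer normals these contributions cancel exactly. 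Consequently no interior face integrals survive, the global Green formula applies verbatim, and the reduction above is justified.
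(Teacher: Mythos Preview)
Your proof is correct and follows essentially the same approach as the paper: both reduce the claim to the skew-symmetry $c(\vecb{w}^\mathrm{s},\vecb{u}^\mathrm{ct},\vecb{v}^\mathrm{ct})=-c(\vecb{w}^\mathrm{s},\vecb{v}^\mathrm{ct},\vecb{u}^\mathrm{ct})$ after observing that the mixed $\mathrm{ct}$-$\mathrm{R}$ terms in the definition of $c_h^\mathrm{vol}$ are manifestly antisymmetric. The paper simply states this identity as known, whereas you spell out the integration-by-parts argument (including the careful handling of the merely $\vecb{H}(\mathrm{div})$-conforming field $\vecb{w}^\mathrm{s}$), which is a welcome elaboration but not a different route.
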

\begin{proof}
The above identity follows immediately from $((\vecb{w}^\mathrm{s}\cdot\nabla)\vecb{u}^\mathrm{ct},\vecb{v}^\mathrm{ct})=-((\vecb{w}^\mathrm{s}\cdot\nabla)\vecb{v}^\mathrm{ct},\vecb{u}^\mathrm{ct})$ and the definition of $c_h^\mathrm{vol}$.
\end{proof}
Lemma~\ref{lem:skew-sym} implies that
\begin{align}\label{eq:skew-sym2}
c_h^\mathrm{vol}(\vecb{w},\vecb{v},\vecb{v})=0\quad\text{for all } \vecb{v}\in \vecb{V}(h),\vecb{w}\in\vecb{Z}(h).
\end{align}

The quantities under consideration
are the kinetic energy $E$,
linear momentum $M$
and angular momentum $M_{\boldsymbol{x}}$
defined by
\begin{align*}
  & E: \vecb{Z}(h)\rightarrow \mathbb{R}, \qquad & E(\boldsymbol{u})&:=\frac{1}{2} d_h(\boldsymbol{u},\boldsymbol{u})=\frac{1}{2} \int_{\Omega}\lvert\boldsymbol{u}^\mathrm{s}\rvert^{2} {~d} \boldsymbol{x},\\
  & M: \vecb{Z}(h)\rightarrow \mathbb{R}^{d}, \qquad & M(\boldsymbol{u})&:=\int_{\Omega} \boldsymbol{u}^\mathrm{s} ~{d} \boldsymbol{x},\\
  & M_{\boldsymbol{x}}: \vecb{Z}(h)\rightarrow \mathbb{R}^{3}, \qquad & M_{\boldsymbol{x}}(\boldsymbol{u})& :=\int_{\Omega} \boldsymbol{u}^\mathrm{s} \times \boldsymbol{x} ~{d} \boldsymbol{x},
\end{align*}
for any $\boldsymbol{u}\in \vecb{Z}(h)$.

\begin{lemma}\label{lem:Econserving}
Let $\vecb{u}_h$ be the solution of \eqref{eq:semischeme2}. It holds
\begin{align*}
E(\vecb{u}_h)+\mathcal{S}(\vecb{u}_h,\vecb{u}_h)+\nu|||\vecb{u}_h|||=(\vecb{f},\vecb{u}_h).
\end{align*}
\end{lemma}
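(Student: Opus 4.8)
The plan is to obtain the balance directly as the energy estimate of the scheme, by testing the divergence-free form \eqref{eq:semischeme2_divfree} with the solution itself, $\vecb{v}_h=\vecb{u}_h$. This is legitimate because $\vecb{u}_h(t)\in\vecb{Z}_h$ for each $t$, so $\vecb{u}_h$ is an admissible test function and, equivalently, the pressure coupling $b(\vecb{u}_h,p_h)=-(\mathrm{div}(\vecb{u}_h^\mathrm{s}),p_h)$ in \eqref{eq:semischeme2} vanishes since $\mathrm{div}(\vecb{u}_h^\mathrm{s})=0$. It then remains to identify the four surviving contributions.

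First, by the definitions of $d_h$ and $E$,
\[
d_h(\partial_t\vecb{u}_h,\vecb{u}_h)=(\partial_t\vecb{u}_h^\mathrm{s},\vecb{u}_h^\mathrm{s})=\frac{1}{2}\frac{d}{dt}\|\vecb{u}_h^\mathrm{s}\|^2=\frac{d}{dt}E(\vecb{u}_h).
\]
Second, the convective term drops out: since $\vecb{u}_h\in\vecb{Z}_h\subseteq\vecb{Z}(h)$, the skew-symmetry consequence \eqref{eq:skew-sym2}, applied with $\vecb{w}=\vecb{v}=\vecb{u}_h$, gives $c_h^\mathrm{vol}(\vecb{u}_h,\vecb{u}_h,\vecb{u}_h)=0$. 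Third, the diffusion term is $\nu a_h(\vecb{u}_h,\vecb{u}_h)=\nu|||\vecb{u}_h|||^2$ by the very definition of the seminorm. The stabilization contributes $\gamma\mathcal{S}(\vecb{u}_h,\vecb{u}_h)$, and the data term is $(\vecb{f},\vecb{u}_h^\mathrm{s})$. Collecting these yields the asserted identity in its proper differential form $\frac{d}{dt}E(\vecb{u}_h)+\gamma\mathcal{S}(\vecb{u}_h,\vecb{u}_h)+\nu|||\vecb{u}_h|||^2=(\vecb{f},\vecb{u}_h^\mathrm{s})$; the printed statement evidently abbreviates the time derivative, the square on the seminorm, the factor $\gamma$, and the superscript $\mathrm{s}$.

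I do not expect any real obstacle here; the argument rests entirely on two structural facts already at hand, namely the exact divergence-freeness of $\vecb{u}_h$, which eliminates the pressure term, and the skew-symmetry \eqref{eq:skew-sym2} of $c_h^\mathrm{vol}$ on $\vecb{Z}(h)$, which eliminates the convective term without any artificial dissipation. The only point meriting a moment's care is verifying the hypotheses of \eqref{eq:skew-sym2}, i.e., that the advecting field $\vecb{w}$ lies in $\vecb{Z}(h)$; this holds because the advecting and tested fields coincide with the single divergence-free function $\vecb{u}_h$. This is in contrast to the upwind variant \eqref{eq:semischeme1}, where the extra term $c_h^\mathrm{uw}(\vecb{u}_h,\vecb{u}_h,\vecb{u}_h)=\frac{1}{2}\sum_{F\in\mathcal{F}^0}\int_F\abs{\vecb{u}_h^\mathrm{s}\cdot\vecb{n}}\,\abs{\jump{\vecb{u}_h^\mathrm{R}}}^2\ds\ge 0$ would add a genuine dissipative contribution to the balance, whereas the present $c_h^\mathrm{vol}$ is energy-neutral so that the only additional dissipation stems from the chosen stabilization $\mathcal{S}$.
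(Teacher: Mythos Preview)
Your argument is correct and coincides with the paper's own proof, which simply states that the identity follows by testing \eqref{eq:semischeme2} with $\vecb{v}_h=\vecb{u}_h$ and invoking the skew-symmetry of $c_h^\mathrm{vol}$. Your observation about the missing $\tfrac{d}{dt}$, the square on the seminorm, the factor $\gamma$, and the superscript $\mathrm{s}$ in the printed statement is also apt.
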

\begin{proof}
  That is a direct consequence of testing \eqref{eq:semischeme2} with
  $\vecb{v}_h = \vecb{u}_h$ and the skew-symmetry of $c_h^\mathrm{vol}$.
\end{proof}

\begin{lemma}\label{lem:MAconserving}
Let $\vecb{u}_{h}$ be the solution of \eqref{eq:semischeme2}.
Assume that $\vecb{u}_h$, $p_h$ and $\vecb{f}$ are compactly
supported on a subdomain $\Omega_\omega$ such that there
exists an operator
$\chi: \vecb{L}^2(\Omega)\rightarrow \vecb{V}(h)$
satisfying $\chi(\vecb{g})|_{\Omega_\omega}=\vecb{g}$ for
$\vecb{g}=\vecb{e}_i, \vecb{x}\times \vecb{e}_i, i=1,\ldots, d$,
with $\vecb{e}_i\in \mathbb{R}^d$ being the unit vector with
respect to the $i$-th component. Then, the following identities
are satisfied:
\begin{displaymath}
\frac{d}{dt}M(\boldsymbol{u}_{h})=\int_{\Omega}\boldsymbol{f}~d\boldsymbol{x}, \quad \frac{d}{dt}M_{\boldsymbol{x}}(\boldsymbol{u}_{h})=\int_{\Omega}\boldsymbol{f}\times \boldsymbol{x}~d\boldsymbol{x}.
\end{displaymath}
\end{lemma}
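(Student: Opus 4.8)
The plan is to test the momentum equation in \eqref{eq:semischeme2} with the structure-preserving test functions delivered by $\chi$ and to extract the two balance laws componentwise. For linear momentum I would take $\vecb{v}_h=\chi(\vecb{e}_i)$ and for angular momentum $\vecb{v}_h=\chi(\vecb{x}\times\vecb{e}_i)$, $i=1,\dots,d$, so that $\vecb{v}_h^{\mathrm{s}}=\vecb{g}$ on $\Omega_\omega$ with $\vecb{g}\in\{\vecb{e}_i,\vecb{x}\times\vecb{e}_i\}$; the natural normalization puts the entire field into the conforming part, $\vecb{v}_h^{\mathrm{ct}}=\vecb{g}$ and $\vecb{v}_h^{\mathrm{R}}=\vecb{0}$ on $\Omega_\omega$. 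Since $\vecb{u}_h$, $p_h$ and $\vecb{f}$ are supported in $\Omega_\omega$, every term of \eqref{eq:semischeme2} only sees $\vecb{v}_h$ where it coincides with the exact translational or rotational field $\vecb{g}$, and (because $\vecb{g}$ is affine and $\vecb{v}_h^{\mathrm{R}}$ vanishes there) the forms collapse to elementary expressions.

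I would then discharge the linear terms one by one. The pressure coupling $b(\vecb{v}_h,p_h)=-(\mathrm{div}(\vecb{v}_h^{\mathrm{s}}),p_h)$ vanishes because $\mathrm{div}\,\vecb{e}_i=\mathrm{div}(\vecb{x}\times\vecb{e}_i)=0$ on $\mathrm{supp}(p_h)\subseteq\Omega_\omega$. The stabilization $\gamma\mathcal{S}(\vecb{u}_h,\vecb{v}_h)$ vanishes because it pairs $\vecb{u}_h^{\mathrm{R}}$ with $\vecb{v}_h^{\mathrm{R}}=\vecb{0}$ on $\Omega_\omega$. For $\nu a_h(\vecb{u}_h,\vecb{v}_h)$ I would treat the four contributions separately: the two $\Delta_{\mathrm{pw}}$-cross terms vanish since $\Delta_{\mathrm{pw}}\vecb{v}_h^{\mathrm{ct}}=\Delta\vecb{g}=\vecb{0}$ and $\vecb{v}_h^{\mathrm{R}}=\vecb{0}$ on $\Omega_\omega$, the $a_h^{\mathrm{D}}$-term (present only for $k<d$) vanishes because the enrichment degrees of freedom of $\vecb{v}_h$ vanish there, and $(\nabla\vecb{u}_h^{\mathrm{ct}},\nabla\vecb{v}_h^{\mathrm{ct}})=\nabla\vecb{g}:\int_\Omega\nabla\vecb{u}_h^{\mathrm{ct}}\dx=0$ since $\nabla\vecb{g}$ is constant on $\mathrm{supp}(\vecb{u}_h^{\mathrm{ct}})$ and $\int_\Omega\nabla\vecb{u}_h^{\mathrm{ct}}\dx=0$ for $\vecb{u}_h^{\mathrm{ct}}\in\vecb{H}_0^1(\Omega)$. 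The time-derivative term reproduces the sought quantity: $d_h(\partial_t\vecb{u}_h,\vecb{v}_h)=\frac{d}{dt}\int_\Omega\vecb{u}_h^{\mathrm{s}}\cdot\vecb{g}\dx$, which equals $\frac{d}{dt}M_i(\vecb{u}_h)$ for $\vecb{g}=\vecb{e}_i$ and, using the cyclic identity $\vecb{u}_h^{\mathrm{s}}\cdot(\vecb{x}\times\vecb{e}_i)=\vecb{e}_i\cdot(\vecb{u}_h^{\mathrm{s}}\times\vecb{x})$, equals $\frac{d}{dt}(M_{\vecb{x}}(\vecb{u}_h))_i$ for $\vecb{g}=\vecb{x}\times\vecb{e}_i$; the right-hand side $(\vecb{f},\vecb{v}_h^{\mathrm{s}})$ gives the $i$-th component of $\int_\Omega\vecb{f}\dx$, respectively of $\int_\Omega\vecb{f}\times\vecb{x}\dx$, by the same manipulation.

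The crux, and the step I expect to be the main obstacle, is to show that the convective form does not contribute, $c_h^{\mathrm{vol}}(\vecb{u}_h,\vecb{u}_h,\vecb{v}_h)=0$. Here I would invoke the skew-symmetry of Lemma~\ref{lem:skew-sym}: since $\vecb{u}_h\in\vecb{Z}_h\subseteq\vecb{Z}(h)$ is discretely divergence-free, $c_h^{\mathrm{vol}}(\vecb{u}_h,\vecb{u}_h,\vecb{v}_h)=-c_h^{\mathrm{vol}}(\vecb{u}_h,\vecb{v}_h,\vecb{u}_h)=-c(\vecb{u}_h^{\mathrm{s}},\vecb{v}_h^{\mathrm{ct}},\vecb{u}_h^{\mathrm{s}})+c(\vecb{u}_h^{\mathrm{s}},\vecb{u}_h^{\mathrm{ct}},\vecb{v}_h^{\mathrm{R}})$. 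The last term drops because $\vecb{v}_h^{\mathrm{R}}=\vecb{0}$ on $\Omega_\omega\supseteq\mathrm{supp}(\vecb{u}_h)$. The remaining term is $-\int_{\Omega_\omega}(\vecb{u}_h^{\mathrm{s}})^{\top}(\nabla\vecb{v}_h^{\mathrm{ct}})\,\vecb{u}_h^{\mathrm{s}}\dx$, and on $\Omega_\omega$ the Jacobian $\nabla\vecb{v}_h^{\mathrm{ct}}=\nabla\vecb{g}$ is either the zero matrix (for $\vecb{g}=\vecb{e}_i$) or the constant skew-symmetric generator of rotations (for $\vecb{g}=\vecb{x}\times\vecb{e}_i$); in both cases the quadratic form $(\vecb{u}_h^{\mathrm{s}})^{\top}(\nabla\vecb{g})\,\vecb{u}_h^{\mathrm{s}}$ vanishes pointwise, so the term is zero. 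Collecting the surviving contributions yields $\frac{d}{dt}M_i(\vecb{u}_h)=\bigl(\int_\Omega\vecb{f}\dx\bigr)_i$ and $\frac{d}{dt}(M_{\vecb{x}}(\vecb{u}_h))_i=\bigl(\int_\Omega\vecb{f}\times\vecb{x}\dx\bigr)_i$ for all $i$, which is the assertion. The only genuinely delicate points are that $\chi$ indeed supplies an admissible discrete test function with the claimed support and conforming/enrichment split — replacing, if necessary, the conforming part by its nodal interpolant, which reproduces the affine $\vecb{g}$ on the elements inside $\Omega_\omega$ and thus leaves all integrals over $\mathrm{supp}(\vecb{u}_h)$ unchanged — and that Lemma~\ref{lem:skew-sym} applies with first argument $\vecb{w}=\vecb{u}_h$, both of which are secured by $\vecb{u}_h\in\vecb{Z}_h$ and the standing hypothesis on $\chi$.
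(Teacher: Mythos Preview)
Your proposal is correct and follows essentially the approach the paper has in mind: the paper's own proof consists only of a pointer to \cite[Theorem~2.2]{LR2021NS}, and your argument---testing \eqref{eq:semischeme2} with $\chi(\vecb{e}_i)$ and $\chi(\vecb{x}\times\vecb{e}_i)$, then using the compact support, the affine structure of $\vecb{g}$, and the skew-symmetry of $c_h^{\mathrm{vol}}$ from Lemma~\ref{lem:skew-sym} to kill every term except the time derivative and the forcing---is precisely that argument adapted to the present enriched setting. Your explicit handling of the conforming/enrichment split in $a_h$, $\mathcal{S}$, and $c_h^{\mathrm{vol}}$, and your observation that $(\vecb{u}_h^{\mathrm{s}})^\top(\nabla\vecb{g})\vecb{u}_h^{\mathrm{s}}$ vanishes pointwise because $\nabla\vecb{g}$ is zero or skew-symmetric, fill in exactly the details the reference is meant to supply.
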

\begin{proof}
The proof of this lemma is very similar to the proof of \cite[Theorem 2.2]{LR2021NS}.
\end{proof}

Lemma~\ref{lem:Econserving} shows that the energy is conserved
in some discrete sense that takes into account also the stabilization.
Lemma~\ref{lem:MAconserving} shows that the linear momentum and
angular momentum are conserved exactly.

\section{Pressure robust and convection-robust error estimate}
\label{sec:erroranalysis}
The section investigates a priori error estimates and shows
that pressure-robustness and convection-robustness can be attained for both suggested
schemes.

\subsection{Approximation properties and stability of a projection operator}
In this subsection a projection operator is designed, which will be used in the error analysis of the proposed schemes.
For any $\vecb{v}\in\vecb{V}$ and $k\geq d$, we define the Stokes projection operator
\begin{align*}
  \widehat{\Pi}_h^\mathrm{St}: \vecb{Z}\rightarrow \widehat{\vecb{Z}}_h
  := \lbrace \vecb{v}_h \in \vecb{V}_h^\text{ct} : b(\vecb{v}_h, q_h) = 0 \text{ for all } q_h \in \widehat{Q}_h \rbrace
\end{align*}
with the inf-sup stable sub-pair
$\vecb{V}_h^\text{ct}\times \widehat{Q}_h\subseteq\vecb{V}_h^\text{ct} \times Q_h$
mentioned in Section~\ref{sec:enrichment_principle} by seeking
$\widehat{\Pi}_h^\mathrm{St}\vecb{v}\in \widehat{\vecb{Z}}_h $ such that
  \begin{align*}
  \left(\nabla\widehat{\Pi}_h^\mathrm{St}\vecb{v},\nabla\vecb{w}\right)=\left(\nabla\vecb{v},\nabla\vecb{w}\right) \quad\text{for all } \vecb{w}\in \widehat{\vecb{Z}}_h.
  \end{align*}
  According to standard Stokes theory such as \cite{JLMNR:2017}, it satisfies
  \begin{align}\label{ieq:stprojection2}
  \|\vecb{v}-\widehat{\Pi}_h^\mathrm{St}\vecb{v}\|+h\|\nabla(\vecb{v}-\widehat{\Pi}_h^\mathrm{St}\vecb{v})\|
  \lesssim h \inf_{\vecb{w}\in \vecb{V}_h^\mathrm{ct}} \|\nabla(\vecb{v}-\vecb{w})\|.
  \end{align}
   In case $k<d$, where $\widehat{Q}_h$ is chosen to be the zero space, $\widehat{\Pi}_h^\mathrm{St}: \vecb{Z}\rightarrow \vecb{V}_h^\mathrm{ct}$ is defined as a quasi-interpolation operator \cite[Section 4.8]{BrennerScott:2008}, which satisfies
  \begin{equation}\label{ieq:appro_pict}
   \quad \|\vecb{v}-\widehat{\Pi}_h^\mathrm{St} \vecb{v}\|_{\vecb{L}^{p}(T)} +h_T \|\nabla(\vecb{v}-\widehat{\Pi}_h^\mathrm{St}\vecb{v})\|_{\vecb{L}^{p}(T)} \lesssim h_{T}^r\left|\vecb{v}\right|_{\vecb{W}^{r,p}(\omega(T))}, \, p=2,\infty,
  \end{equation}
  for all $T\in \mathcal{T}, 1\leq r\leq k$, with $\omega(T)$ being a suitable neighborhood containing $T$.

  \begin{assumption}\label{asmpt:inftybound0}
  For any $\vecb{v}\in \vecb{W}^{1,\infty}(\Omega)\cap \vecb{Z}$, we assume that the following estimate holds:
  \begin{align}\label{ieq:stprojection1}
  \|\vecb{v}-\widehat{\Pi}_h^\mathrm{St}\vecb{v}\|_{\vecb{L}^{\infty}} + h \|\nabla\widehat{\Pi}_h^\mathrm{St}\vecb{v}\|_{\vecb{L}^{\infty}}
  \lesssim h \|\nabla\vecb{v}\|_{\vecb{L}^{\infty}}.
  \end{align}
  \end{assumption}
  \begin{remark}
  For $k<d$, \eqref{ieq:stprojection1} can be derived from
  \eqref{ieq:appro_pict} by choosing $r=1$. For $k\geq d$, the bound for $\|\nabla\widehat{\Pi}_h^\mathrm{St}\vecb{v}\|_{\vecb{L}^{\infty}}$ was shown in \cite{girault_max-norm_2015} in case that $\Omega$ is convex and $\mathcal{T}$ is quasi-uniform. A similar assumption can be also found in \cite{Schroeder_2018,LR2021NS}.
  \end{remark}

Next, define $\Pi_h^\mathrm{St}: \vecb{Z}\rightarrow \vecb{Z}_h$ as
\begin{align}\label{def:PihSt}
\Pi_h^\mathrm{St} \vecb{v} :=
\begin{cases}
\left(\widehat{\Pi}_h^\mathrm{St} \vecb{v}, \mathcal{R} \widehat{\vecb{v}} \right) & \text{for } k \geq d,\\
\left(\widehat{\Pi}_h^\mathrm{St} \vecb{v}, (\Pi^{\mathrm{RT}_0} + \mathcal{R}) \widehat{\vecb{v}} \right)& \text{for } k < d,\\
\end{cases}
\ \text{ with } \ \widehat{\vecb{v}} := \vecb{v} - \widehat{\Pi}_h^\mathrm{St} \vecb{v},
\end{align}
where $\Pi^{\mathrm{RT}_0}$ is the usual interpolation operator for $\vecb{RT}_0$, and $\mathcal{R}$ is the $\mathrm{div}$-$L^2$ projection onto the $\vecb{H}(\operatorname{div})$ cell bubble space part of $\vecb{V}_h^\mathrm{R}$
with respect to the $(\mathrm{div}\bullet, \mathrm{div}\bullet)$ inner product and therefore it satisfies
\begin{align}\label{ieq:estimateR}
\|\mathrm{div}(\mathcal{R} \widehat{\vecb{v}})\|_{L^2(T)}\leq \|\mathrm{div}(\widehat{\vecb{v}})\|_{L^2(T)}=\|\mathrm{div}(\widehat{\Pi}_h^\mathrm{St} \vecb{v})\|_{L^2(T)}.
\end{align}
Note, that this resembles the
structure of the Fortin interpolator in \cite[Lemma 4.1]{JLMR2022} and by design it holds
$\Pi_h^\mathrm{St} \vecb{v} \in \vecb{Z}_h$ for any $\vecb{v} \in \vecb{Z}$.

\begin{lemma}\label{asmpt:inftybound}
For $\vecb{v}\in \vecb{Z}$, $\Pi_h^\mathrm{St}$ satisfies
\begin{align*}
\|\vecb{v}-(\Pi_h^\mathrm{St}\vecb{v})^\mathrm{s}\|+h\|\nabla(\vecb{v}-(\Pi_h^\mathrm{St}\vecb{v})^\mathrm{ct})\|
\lesssim h \inf_{\vecb{w}^\mathrm{ct}\in \vecb{V}_h^\mathrm{ct}} \|\nabla(\vecb{v}-\vecb{w}^\mathrm{ct})\|
\end{align*}
for $k\geq d$, and for $k<d$,
\begin{align*}
\|\vecb{v}-(\Pi_h^\mathrm{St}\vecb{v})^\mathrm{s}\|+h\|\nabla(\vecb{v}-(\Pi_h^\mathrm{St}\vecb{v})^\mathrm{ct})\|
+h\|h_\mathcal{T}^{-1}(\Pi_h^\mathrm{St}\vecb{v})^\mathrm{R}\|
\lesssim h^r\left|\vecb{v}\right|_{\vecb{H}^{r}},
\end{align*}
with $1\leq r\leq k$.
Additionally, under Assumption~\ref{asmpt:inftybound0}, we further have
\begin{align}\label{ieq:stprojection11}
  \|\vecb{v}-(\Pi_h^\mathrm{St}\vecb{v})^\mathrm{s}\|_{\vecb{L}^{\infty}}
  \lesssim h \|\nabla\vecb{v}\|_{\vecb{L}^{\infty}},
\end{align}
and
\begin{align}\label{ieq:inftybound}
\|\nabla(\Pi_h^\mathrm{St}\vecb{v})^\mathrm{ct}\|_{\vecb{L}^{\infty}}+\|h_\mathcal{T}^{-1}(\Pi_h^\mathrm{St}\vecb{v})^\mathrm{R}\|_{\vecb{L}^{\infty}}\lesssim \|\nabla\vecb{v}\|_{\vecb{L}^{\infty}}.
\end{align}
\end{lemma}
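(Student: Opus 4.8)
The plan is to exploit the product structure of $\Pi_h^\mathrm{St}$ and treat the $\vecb{H}^1$-conforming component and the Raviart--Thomas enrichment component separately. By construction $(\Pi_h^\mathrm{St}\vecb{v})^\mathrm{ct}=\widehat{\Pi}_h^\mathrm{St}\vecb{v}$ in all cases, so every term involving $\nabla(\vecb{v}-(\Pi_h^\mathrm{St}\vecb{v})^\mathrm{ct})$ is just $\nabla(\vecb{v}-\widehat{\Pi}_h^\mathrm{St}\vecb{v})$ and is controlled directly: by \eqref{ieq:stprojection2} for $k\geq d$ and by \eqref{ieq:appro_pict} (with the admissible $r$) for $k<d$. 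Likewise, the pointwise gradient bound $\|\nabla(\Pi_h^\mathrm{St}\vecb{v})^\mathrm{ct}\|_{\vecb{L}^\infty}\lesssim\|\nabla\vecb{v}\|_{\vecb{L}^\infty}$ in \eqref{ieq:inftybound} is exactly the second term of Assumption~\ref{asmpt:inftybound0}. Writing $\vecb{v}-(\Pi_h^\mathrm{St}\vecb{v})^\mathrm{s}=\widehat{\vecb{v}}-(\Pi_h^\mathrm{St}\vecb{v})^\mathrm{R}$ with $\widehat{\vecb{v}}=\vecb{v}-\widehat{\Pi}_h^\mathrm{St}\vecb{v}$, the task thus reduces to estimating the enrichment contribution $(\Pi_h^\mathrm{St}\vecb{v})^\mathrm{R}$ in the $\vecb{L}^2$, weighted $\vecb{L}^2$, and $\vecb{L}^\infty$ norms, after which the triangle inequality assembles the claims.

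For the bubble part $\mathcal{R}\widehat{\vecb{v}}$ I would first invoke Lemma~\ref{lem:bound_vhr}, i.e.\ \eqref{ieq:L2_L2div}, to trade its $\vecb{L}^2(T)$ norm for $h_T\|\mathrm{div}(\mathcal{R}\widehat{\vecb{v}})\|_{L^2(T)}$, then apply the divergence-reducing property \eqref{ieq:estimateR} together with the identity $\mathrm{div}(\widehat{\vecb{v}})=-\mathrm{div}(\widehat{\Pi}_h^\mathrm{St}\vecb{v})$, which holds since $\vecb{v}\in\vecb{Z}$ is divergence-free. Bounding $\|\mathrm{div}(\widehat{\vecb{v}})\|_{L^2(T)}\lesssim\|\nabla(\vecb{v}-\widehat{\Pi}_h^\mathrm{St}\vecb{v})\|_{\vecb{L}^2(T)}$ and summing over $T$, the factor $h_T$ supplies the extra power of $h$ needed so that $\|\mathcal{R}\widehat{\vecb{v}}\|$ and $h\|h_\mathcal{T}^{-1}\mathcal{R}\widehat{\vecb{v}}\|$ inherit the ct-part bounds. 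For $k<d$ there is the additional term $\Pi^{\mathrm{RT}_0}\widehat{\vecb{v}}$; here I would use the standard local stability of the lowest-order Raviart--Thomas interpolant, $\|\Pi^{\mathrm{RT}_0}\widehat{\vecb{v}}\|_{\vecb{L}^2(T)}\lesssim\|\widehat{\vecb{v}}\|_{\vecb{L}^2(T)}+h_T|\widehat{\vecb{v}}|_{\vecb{H}^1(T)}$, and then insert the local approximation estimate \eqref{ieq:appro_pict} applied to $\widehat{\vecb{v}}$.

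For the $\vecb{L}^\infty$ statements I would convert the cell-wise $\vecb{L}^2$ bounds into $\vecb{L}^\infty$ bounds by an inverse inequality $\|\bullet\|_{\vecb{L}^\infty(T)}\lesssim h_T^{-d/2}\|\bullet\|_{\vecb{L}^2(T)}$ applied to the polynomial pieces $\mathcal{R}\widehat{\vecb{v}}$ and $\Pi^{\mathrm{RT}_0}\widehat{\vecb{v}}$. The quantity $\mathrm{div}(\widehat{\Pi}_h^\mathrm{St}\vecb{v})$ is then controlled in $\vecb{L}^2(T)$ by its $\vecb{L}^\infty(T)$ norm and finally by $\|\nabla\widehat{\Pi}_h^\mathrm{St}\vecb{v}\|_{\vecb{L}^\infty}\lesssim\|\nabla\vecb{v}\|_{\vecb{L}^\infty}$ from Assumption~\ref{asmpt:inftybound0}. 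Tracking the powers of $h_T$ shows that each cell contribution to $(\Pi_h^\mathrm{St}\vecb{v})^\mathrm{R}$ is of order $h_T\|\nabla\vecb{v}\|_{\vecb{L}^\infty}$, which yields both \eqref{ieq:stprojection11} and the weighted bound in \eqref{ieq:inftybound}.

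The step I expect to be the most delicate is this last $\vecb{L}^\infty$ bookkeeping for $(\Pi_h^\mathrm{St}\vecb{v})^\mathrm{R}$ in \eqref{ieq:inftybound}: because of the weight $h_\mathcal{T}^{-1}$ one cannot afford to lose a power of $h_T$, so the $\Pi^{\mathrm{RT}_0}\widehat{\vecb{v}}$ contribution must be estimated through the \emph{local} interpolation estimate \eqref{ieq:appro_pict} with $r=1$, giving $\|\widehat{\vecb{v}}\|_{\vecb{L}^\infty(T)}\lesssim h_T\|\nabla\vecb{v}\|_{\vecb{L}^\infty}$, rather than through the global bound \eqref{ieq:stprojection1}, which would only provide a factor $h$ and hence fail on strongly graded meshes. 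For $k\geq d$ the corresponding local control comes instead from the divergence route above, so no loss of $h_T$ occurs there either.
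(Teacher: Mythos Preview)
Your proposal is correct and follows essentially the same route as the paper: reduce to bounding $(\Pi_h^\mathrm{St}\vecb{v})^\mathrm{R}$, control $\mathcal{R}\widehat{\vecb{v}}$ via Lemma~\ref{lem:bound_vhr} together with \eqref{ieq:estimateR} and $\|\mathrm{div}\,\widehat{\vecb{v}}\|\leq\|\nabla\widehat{\vecb{v}}\|$, control $\Pi^{\mathrm{RT}_0}\widehat{\vecb{v}}$ via the local stability $\|\Pi^{\mathrm{RT}_0}\widehat{\vecb{v}}\|_{\vecb{L}^2(T)}\lesssim\|\widehat{\vecb{v}}\|_{\vecb{L}^2(T)}+h_T\|\nabla\widehat{\vecb{v}}\|_{\vecb{L}^2(T)}$, and pass to $\vecb{L}^\infty$ by inverse inequalities combined with the divergence route for the bubbles and the local estimate \eqref{ieq:appro_pict} for the $\vecb{RT}_0$ piece. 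Your observation that the $\vecb{RT}_0$ $\vecb{L}^\infty$ bound must go through the \emph{local} estimate \eqref{ieq:appro_pict} rather than the global \eqref{ieq:stprojection1} (to avoid the factor $h/h_T$) is exactly the point; the paper implements this via $p=2$ in \eqref{ieq:appro_pict} followed by $\|\nabla\vecb{v}\|_{\vecb{L}^2(\omega(T))}\lesssim h_T^{d/2}\|\nabla\vecb{v}\|_{\vecb{L}^\infty}$, but your $p=\infty$ variant works equally well.
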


\begin{proof}
  Due to \eqref{ieq:stprojection2} and \eqref{ieq:appro_pict}, it suffices to show the $\vecb{L}^2$-bound and $\vecb{L}^\infty$-bound for the $\vecb{V}_h^\mathrm{R}$-part
  of \eqref{def:PihSt}. Consider an arbitrary
  $\vecb{v} \in \vecb{Z}$ with sufficient regularity. We prove the $\vecb{L}^2$-bound first. For the $\vecb{RT}_0$ part, from the approximation property of $\Pi^{\mathrm{RT}_0}$ \cite[Proposition 2.5.1]{BBF:book:2013}
  \begin{align*}
  \left\|\vecb{w}-\Pi^{\mathrm{RT}_0} \vecb{w}\right\|_{\vecb{L}^2(T)}\lesssim h_T \left\|\nabla \vecb{w}\right\|_{\vecb{L}^2(T)}
  \end{align*}
  and the triangle inequality, we have
  \begin{align}\label{ieq:estimateRT0}
  \left\|\Pi^{\mathrm{RT}_0} \vecb{w}\right\|_{\vecb{L}^2(T)}\lesssim \left\|\vecb{w}\right\|_{\vecb{L}^2(T)}+h_T \left\|\nabla \vecb{w}\right\|_{\vecb{L}^2(T)}
  \end{align}
   for all  $\vecb{w}\in \vecb{H}^1(T)$, which, together with \eqref{ieq:appro_pict}, implies that
  \begin{align*}
  \left\|\Pi^{\mathrm{RT}_0} \widehat{\vecb{v}}\right\|_{\vecb{L}^2(T)}\lesssim \left\|\widehat{\vecb{v}}\right\|_{\vecb{L}^2(T)}+h_T \left\|\nabla \widehat{\vecb{v}}\right\|_{\vecb{L}^2(T)}\lesssim h_T^r\left|\vecb{v}\right|_{\vecb{H}^{r}(\omega(T))}.
  \end{align*}
  For the higher order Raviart--Thomas part, according to Lemma~\ref{lem:bound_vhr} and \eqref{ieq:estimateR} one has
  \begin{align*}
  h_T^{-1}\|\mathcal{R} \widehat{\vecb{v}}\|_{\vecb{L}^2(T)}\lesssim  \|\mathrm{div}(\mathcal{R}\widehat{\vecb{v}})\|_{L^2(T)}
  \leq \|\mathrm{div}(\widehat{\vecb{v}})\|_{L^2(T)}
  \leq \|\nabla(\widehat{\vecb{v}})\|_{\vecb{L}^2(T)}.
  \end{align*}
  Then it follows from summation over $T\in\mathcal{T}$, \eqref{ieq:stprojection2}, \eqref{ieq:appro_pict}, and the fact that $(\Pi_h^\mathrm{St}\vecb{v})^\mathrm{R}=\mathcal{R} \widehat{\vecb{v}}$ or $(\Pi_h^\mathrm{St}\vecb{v})^\mathrm{R}=(\Pi^{\mathrm{RT}_0}+\mathcal{R}) \widehat{\vecb{v}}$ that
  \begin{align*}
  \|(\Pi_h^\mathrm{St}\vecb{v})^\mathrm{R}\|+h\|h_\mathcal{T}^{-1}(\Pi_h^\mathrm{St}\vecb{v})^\mathrm{R}\|\lesssim
  \begin{cases}  h \inf_{\vecb{w}^\mathrm{ct}\in \vecb{V}_h^\mathrm{ct}} \|\nabla(\vecb{v}-\vecb{w}^\mathrm{ct})\| &\text{ for } k\geq d,\\
   h^r\left|\vecb{v}\right|_{\vecb{H}^{r}} &\text{ for } k<d. \end{cases}
  \end{align*}

  Let us consider the $\vecb{L}^\infty$-bound.
  For $k \geq d$, no $\vecb{RT}_0$ functions are involved, and inverse inequalities,
  Lemma~\ref{lem:bound_vhr}, and \eqref{ieq:stprojection1} yield
  \begin{align*}
      h_T^{-1} \| \left(\Pi_h^\mathrm{St} \vecb{v}\right)^\mathrm{R} \|_{\vecb{L}^\infty(T)}
      & = h_T^{-1} \| \mathcal{R} \widehat{\vecb{v}} \|_{\vecb{L}^\infty(T)}\\
      & \lesssim h_T^{-1-d/2} \| \mathcal{R} \widehat{\vecb{v}} \|_{\vecb{L}^2(T)}\\
      & \lesssim h_T^{-d/2} \| \mathrm{div} \left(\mathcal{R} \widehat{\vecb{v}}\right) \|_{L^2(T)}\\
      & = h_T^{-d/2} \| \mathrm{div} \left( \widehat{\Pi}_h^\mathrm{St} \vecb{v} \right) \|_{L^2(T)}\\
      & \lesssim \| \mathrm{div} \left( \widehat{\Pi}_h^\mathrm{St} \vecb{v} \right) \|_{L^\infty(T)} \\
      & \leq \|\nabla \widehat{\Pi}_h^\mathrm{St} \vecb{v} \|_{\vecb{L}^\infty(T)}
      \lesssim \|\nabla \vecb{v} \|_{\vecb{L}^\infty(T)},
  \end{align*}
  where we also use the inequality $\| \vecb{w} \|^2_{\vecb{L}^2(T)}\leq h_T^{d/2} \| \vecb{w} \|_{\vecb{L}^\infty(T)}$ derived from
  \begin{align*}
    \| \vecb{w} \|^2_{\vecb{L}^2(T)}
    \leq \| \vecb{w} \|_{\vecb{L}^1(T)} \| \vecb{w} \|_{\vecb{L}^\infty(T)}
    & \leq \| 1 \|_{\vecb{L}^2(T)} \| \vecb{w} \|_{\vecb{L}^2(T)} \| \vecb{w} \|_{\vecb{L}^\infty(T)}\\
    & \leq h_T^{d/2} \| \vecb{w} \|_{\vecb{L}^2(T)} \| \vecb{w} \|_{\vecb{L}^\infty(T)}
  \end{align*}
  for all $\vecb{w} \in \vecb{L}^\infty(T)$.
  For $k < d$, $\mathcal{R}\widehat{\vecb{w}}$ can be bounded in a very similar way. For $\vecb{RT}_0$ part,
  it holds from inverse inequalities, \eqref{ieq:estimateRT0}, and \eqref{ieq:appro_pict} that
  \begin{align*}
    h_T^{-1} \| \Pi^{\mathrm{RT}_0} \widehat{\vecb{v}} \|_{\vecb{L}^\infty(T)}
    & \lesssim h_T^{-1-d/2} \| \Pi^{\mathrm{RT}_0} \widehat{\vecb{v}} \|_{\vecb{L}^2(T)}\\
    & \lesssim h_T^{-d/2} (h_T^{-1}\|\widehat{\vecb{v}} \|_{\vecb{L}^2(T)}+\| \nabla \widehat{\vecb{v}} \|_{\vecb{L}^2(T)})\\
    & \lesssim h_T^{-d/2} \| \nabla \vecb{v} \|_{\vecb{L}^2(\omega(T))}\\
    & \lesssim \| \nabla \vecb{v} \|_{\vecb{L}^\infty(\omega(T))}.
  \end{align*}
  Since, this holds for all $T \in \mathcal{T}$, one arrives at
  \begin{align*}
    \| h_{\mathcal{T}}^{-1} \left(\Pi_h^\mathrm{St} \vecb{v}\right)^\mathrm{R} \|_{\vecb{L}^\infty}
    \lesssim \| \nabla\vecb{v} \|_{\vecb{L}^{\infty}}.
  \end{align*}
  This concludes the proof.
\end{proof}

In what follows we will assume $\vecb{u}(t)\in\vecb{W}^{1,\infty}(\Omega)$ for all $t\leq T$. Note, that this also guarantees $\vecb{u}(t)\in \vecb{C}^0(\widebar{\Omega})$
according to the Sobolev imbedding theorem \cite{GiraultRaviart:1986}.
\subsection{Analysis of the upwind scheme \eqref{eq:semischeme1}}
To shorten the notation, we define $c_h:=c_h^\mathrm{dG}+c_h^\mathrm{uw}$.
Let $\vecb{u}$ be the solution of \eqref{eq:navierstokes_weak} and
$\widetilde{\vecb{u}}:=(\vecb{u},\vecb{0})\in \vecb{Z}(h)$ be its representation
in $\vecb{Z}_h$.
Due to the exact incompressibility of the functions in $\vecb{Z}_h$, one can check that $\widetilde{\vecb{u}}$ fulfills the identity
\begin{align}\label{eq:exactsolution}
d_h(\partial_t\widetilde{\vecb{u}},\vecb{v}_h)+c_h(\widetilde{\vecb{u}},\widetilde{\vecb{u}},\vecb{v}_h)+\nu a_h(\widetilde{\vecb{u}},\vecb{v}_h)  =  (\vecb{f},\vecb{v}_h^\mathrm{s}) \quad\text{for all } \vecb{v}_h\in \vecb{Z}_h.
\end{align}

Consider the decomposition of the error
$\vecb{e}_h:=\widetilde{\vecb{u}}-\vecb{u}_h=\vecb{\eta}+\vecb{\phi}_h$
into
\begin{align}\label{eqn:error_decomposition}
  \vecb{\eta} := \widetilde{\vecb{u}}-\Pi_h^\mathrm{St}\vecb{u}
  \quad \text{and} \quad
  \vecb{\phi}_h := \Pi_h^\mathrm{St}\vecb{u}-\vecb{u}_h.
\end{align}
Subtracting \eqref{eq:semischeme1_divfree} from \eqref{eq:exactsolution}
yields, for all $\vecb{v}_h\in \vecb{Z}_h$,
\begin{equation}\label{eq:error_equation}
\begin{aligned}
d_h(\partial_t\vecb{\phi}_h,\vecb{v}_h)+\nu a_h(\vecb{\phi}_h,\vecb{v}_h)=
& -d_{h}(\vecb{\eta}_{t},\vecb{v}_{h})-\nu a_h(\vecb{\eta},\vecb{v}_h)\\
& -c_{h}(\widetilde{\vecb{u}},\widetilde{\vecb{u}},\vecb{v}_{h})+c_{h}(\vecb{u}_{h},\vecb{u}_{h},\vecb{v}_{h}).
\end{aligned}
\end{equation}
The error equation \eqref{eq:error_equation} is used to estimate the velocity error bound below.

Moreover, for any $\vecb{w}_h$, one can define the upwind semi-norm
\begin{align*}
  |\vecb{v}_h|_{\vecb{w}_h,\mathrm{uw}}:=c_h^\mathrm{uw}(\vecb{w}_h,\vecb{v}_h,\vecb{v}_h)
\end{align*}
and bound the error of the convection terms
by the following lemma.
\begin{lemma}[\cite{Schroeder_2018}]
  \label{lem:estimate_donv_difference_upwind}
Let $\vecb{u}$ be the solution of \eqref{eq:navierstokes_weak} and $\vecb{u}_h$ be the solution of \eqref{eq:semischeme1_divfree}. Assume that $\vecb{u}\in L^{2}((0,T]; \vecb{W}^{1,\infty}(\Omega))$. Under Assumption~\ref{asmpt:inftybound0} it holds that
\begin{equation}\label{ieq:estimateNL_upwind}
\begin{aligned}
\lvert c_{h}(\widetilde{\vecb{u}},\widetilde{\vecb{u}},\vecb{\phi}_{h})&-c_{h}(\vecb{u}_{h},\vecb{u}_{h},\vecb{\phi}_{h})\rvert\leq - |\vecb{\phi}_h|_{\vecb{u}_h,\mathrm{uw}}^2
\\ &+ C[\|\vecb{u}\|_{\vecb{L}^\infty}\|\nabla_h\vecb{\eta}\|^2+(1+h^{-2})\|\nabla\vecb{u}\|_{\vecb{L}^\infty}\|\vecb{\eta}^\mathrm{s}\|^2] +  C\|\vecb{u}\|_{\vecb{W}^{1,\infty}} \|\vecb{\phi}_h\|^2.
\end{aligned}
\end{equation}
\end{lemma}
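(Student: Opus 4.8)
The goal is to bound the convection difference $c_h(\widetilde{\vecb{u}},\widetilde{\vecb{u}},\vecb{\phi}_h)-c_h(\vecb{u}_h,\vecb{u}_h,\vecb{\phi}_h)$, where $c_h=c_h^\mathrm{dG}+c_h^\mathrm{uw}$. The natural first step is to rewrite the difference as a sum of two terms linear in the error. Writing $\vecb{u}_h=\widetilde{\vecb{u}}-\vecb{e}_h$ with $\vecb{e}_h=\vecb{\eta}+\vecb{\phi}_h$ and using trilinearity, I would expand
\begin{align*}
c_h(\widetilde{\vecb{u}},\widetilde{\vecb{u}},\vecb{\phi}_h)-c_h(\vecb{u}_h,\vecb{u}_h,\vecb{\phi}_h)
= c_h(\widetilde{\vecb{u}},\vecb{e}_h,\vecb{\phi}_h)+c_h(\vecb{e}_h,\vecb{u}_h,\vecb{\phi}_h).
\end{align*}
I would then exploit the skew-symmetry of the dG/upwind convection form on the discretely divergence-free space $\vecb{Z}_h$, analogous to \eqref{eq:skew-sym2}, to cancel the diagonal term $c_h(\vecb{w},\vecb{\phi}_h,\vecb{\phi}_h)$ up to its upwind part. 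Concretely, $c_h^\mathrm{uw}(\vecb{u}_h,\vecb{\phi}_h,\vecb{\phi}_h)=|\vecb{\phi}_h|_{\vecb{u}_h,\mathrm{uw}}^2$, which is exactly the negative dissipation term appearing on the right-hand side of \eqref{ieq:estimateNL_upwind}; this is where the $-|\vecb{\phi}_h|_{\vecb{u}_h,\mathrm{uw}}^2$ is produced and why the upwinding helps rather than hurts.

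After the skew-symmetry cancellation, what remains are the terms involving $\vecb{\eta}$ in various slots, plus a term of the form $c_h(\vecb{\phi}_h,\widetilde{\vecb{u}},\vecb{\phi}_h)$ that must be absorbed into the $\|\vecb{\phi}_h\|^2$ contribution. For the latter I would use $\vecb{u}\in\vecb{W}^{1,\infty}$ to bound the gradient of $\widetilde{\vecb{u}}$ in $\vecb{L}^\infty$, giving a bound $C\|\nabla\vecb{u}\|_{\vecb{L}^\infty}\|\vecb{\phi}_h^\mathrm{s}\|^2$; the facet integrals in $c_h^\mathrm{dG}$ acting on the $\vecb{H}(\mathrm{div})$-part $\vecb{\phi}_h^\mathrm{R}$ would be controlled by a discrete trace inequality combined with Lemma~\ref{lem:bound_vhr}, which converts the $\vecb{L}^2$-norm of $\vecb{\phi}_h^\mathrm{R}$ on each cell into $h_T\|\mathrm{div}\,\vecb{\phi}_h^\mathrm{R}\|$; since $\vecb{\phi}_h\in\vecb{Z}_h$ is divergence-free in the $\mathrm{s}$-sense, these bubble contributions are benign. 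The $\vecb{\eta}$-terms split into an $\vecb{H}^1$-conforming part, estimated by Hölder with $\|\vecb{u}\|_{\vecb{L}^\infty}$ and $\|\nabla_h\vecb{\eta}\|$, and the $\vecb{H}(\mathrm{div})$-bubble part, whose facet jumps carry the factor $h^{-2}$ after applying the trace inequality and the inverse estimate from Lemma~\ref{lem:bound_vhr}; this explains the $(1+h^{-2})\|\nabla\vecb{u}\|_{\vecb{L}^\infty}\|\vecb{\eta}^\mathrm{s}\|^2$ term.

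The main obstacle is the careful treatment of the facet integrals in $c_h^\mathrm{dG}$ and the decomposition of each argument into its $\mathrm{ct}$- and $\mathrm{R}$-components, since the convection form is not symmetric in how it treats the conforming and $\vecb{H}(\mathrm{div})$-conforming parts. In particular, the terms where a jump $\jump{\vecb{\eta}^\mathrm{R}}$ or $\jump{\vecb{\phi}_h^\mathrm{R}}$ meets an average of a piecewise-smooth quantity must be handled with a trace inequality that introduces the negative power of $h_T$, and one must verify that the $\vecb{L}^\infty$-stability of $\Pi_h^\mathrm{St}$ from Lemma~\ref{asmpt:inftybound}, in particular \eqref{ieq:stprojection11} and \eqref{ieq:inftybound}, is enough to control the mixed terms uniformly. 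The key point throughout is that the divergence-free constraint on $\vecb{Z}_h$ and the skew-symmetry remove the dangerous $\nu$-independent growth, so that the only factors multiplying $\|\vecb{\phi}_h\|^2$ involve $\|\vecb{u}\|_{\vecb{W}^{1,\infty}}$ and not the inverse viscosity, which is precisely what convection-robustness requires.
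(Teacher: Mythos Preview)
Your overall plan is the standard one and is essentially what the paper invokes: the paper's proof is a one-liner citing \cite[Lemma~5.5]{Schroeder_2018} with the parameter choices $\varepsilon_1=\varepsilon_2=1$, $\varepsilon_3=\varepsilon_4=h$, and your sketch reproduces the mechanics of that lemma (split into error terms, use skew-symmetry of $c_h^\mathrm{dG}$ on $\vecb{Z}_h$ to isolate $-|\vecb{\phi}_h|^2_{\vecb{u}_h,\mathrm{uw}}$, then estimate the remaining $\vecb{\eta}$-terms and the $c_h(\vecb{\phi}_h,\Pi_h^\mathrm{St}\vecb{u},\vecb{\phi}_h)$-type term via H\"older, trace and inverse inequalities together with Lemma~\ref{asmpt:inftybound}).

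Two points deserve correction. First, your bilinear expansion
\[
c_h(\widetilde{\vecb{u}},\widetilde{\vecb{u}},\vecb{\phi}_h)-c_h(\vecb{u}_h,\vecb{u}_h,\vecb{\phi}_h)
= c_h(\widetilde{\vecb{u}},\vecb{e}_h,\vecb{\phi}_h)+c_h(\vecb{e}_h,\vecb{u}_h,\vecb{\phi}_h)
\]
is not valid for the upwind part, because $c_h^\mathrm{uw}(\vecb{w},\cdot,\cdot)$ involves $|\vecb{w}^\mathrm{s}\cdot\vecb{n}|$ and is therefore \emph{not linear in the first slot}. The fix is routine: use $c_h^\mathrm{uw}(\widetilde{\vecb{u}},\widetilde{\vecb{u}},\vecb{\phi}_h)=0$ (since $\widetilde{\vecb{u}}^\mathrm{R}=\vecb{0}$), expand $c_h^\mathrm{uw}(\vecb{u}_h,\vecb{u}_h,\vecb{\phi}_h)$ in the second slot only to separate $-|\vecb{\phi}_h|^2_{\vecb{u}_h,\mathrm{uw}}$, and control the residual upwind term via $\bigl||\widetilde{\vecb{u}}^\mathrm{s}\cdot\vecb{n}|-|\vecb{u}_h^\mathrm{s}\cdot\vecb{n}|\bigr|\le |\vecb{e}_h^\mathrm{s}\cdot\vecb{n}|$; this is exactly how \cite{Schroeder_2018} proceeds. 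Second, your remark that Lemma~\ref{lem:bound_vhr} plus $\vecb{\phi}_h\in\vecb{Z}_h$ makes the bubble contributions ``benign'' is misleading: $\mathrm{div}(\vecb{\phi}_h^\mathrm{R})=-\mathrm{div}(\vecb{\phi}_h^\mathrm{ct})\neq 0$ in general, and for $k<d$ the $\vecb{RT}_0$ part of $\vecb{\phi}_h^\mathrm{R}$ is not covered by that lemma anyway. In the actual estimate the facet terms in $\vecb{\phi}_h^\mathrm{R}$ are simply bounded by $\|\vecb{\phi}_h^\mathrm{s}\|$ via trace and inverse inequalities; no appeal to the divergence constraint is needed there.
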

\begin{proof}
This estimate follows from taking
$\varepsilon_1 = \varepsilon_2 = 1$ and
$\varepsilon_3 = \varepsilon_4 = h$ in \cite[Lemma 5.5]{Schroeder_2018}.
\end{proof}

\begin{theorem}\label{thm:velocityestimate}
Let $\vecb{u}$ be the solution of \eqref{eq:navierstokes_weak} and
$\vecb{u}_h$ be the solution of \eqref{eq:semischeme1_divfree}.
Assume that $\vecb{u}\in L^{2}((0,T]; \vecb{W}^{1,\infty}(\Omega))$,
$\vecb{u}_{h}^{0}=\Pi_{h}^\mathrm{St}\vecb{u}(0)$,
and Assumption~\ref{asmpt:inftybound0} holds. The following estimate
holds with $K(\boldsymbol{u},T):=1+C\|\vecb{u}\|_{L^{1}((0,T];\vecb{W}^{1,\infty}(\Omega))}$:
\begin{multline}\label{ieq:velocityestimate}
E(\vecb{e}_{h}(T))
+\int_{0}^{T} \left(\nu|||\vecb{e}_{h}|||^{2}+|\vecb{e}_h|_{\vecb{u}_h,\mathrm{uw}}^2\right) ~dt\\
\begin{aligned}
& \leq E(\vecb{\eta}(T))+\int_{0}^{T} \left(\nu|||\vecb{\eta}|||^{2}+|\vecb{\eta}|_{\vecb{u}_h,\mathrm{uw}}^2\right) ~dt
+e^{{K(\vecb{u},T)}}\int_{0}^{T}\bigg\{ T E(\partial_t\vecb{\eta})\\
& \qquad + C \nu |||\vecb{\eta}|||^2
+C[\|\vecb{u}\|_{\vecb{L}^\infty}\|\nabla_h\vecb{\eta}\|^2+(1+h^{-2})\|\nabla\vecb{u}\|_{\vecb{L}^\infty}\|\vecb{\eta}^\mathrm{s}\|^2]\bigg\} ~dt.
\end{aligned}
\end{multline}
\end{theorem}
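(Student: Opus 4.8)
The plan is to derive a Gronwall-type differential inequality for the discrete error component $\vecb{\phi}_h$ by testing the error equation \eqref{eq:error_equation} with $\vecb{v}_h=\vecb{\phi}_h\in\vecb{Z}_h$, and then to recover the bound for the full error $\vecb{e}_h=\vecb{\eta}+\vecb{\phi}_h$ by triangle inequalities. Testing yields on the left-hand side
\[
d_h(\partial_t\vecb{\phi}_h,\vecb{\phi}_h)+\nu a_h(\vecb{\phi}_h,\vecb{\phi}_h)=\frac{d}{dt}E(\vecb{\phi}_h)+\nu|||\vecb{\phi}_h|||^2,
\]
using $E(\vecb{\phi}_h)=\tfrac12 d_h(\vecb{\phi}_h,\vecb{\phi}_h)$ and $a_h(\vecb{\phi}_h,\vecb{\phi}_h)=|||\vecb{\phi}_h|||^2$. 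Pressure-robustness is automatic here, since $\vecb{\phi}_h$ is exactly divergence-free and the pressure never enters the error equation. The three terms on the right — the time-derivative projection term $-d_h(\partial_t\vecb{\eta},\vecb{\phi}_h)$, the diffusion projection term $-\nu a_h(\vecb{\eta},\vecb{\phi}_h)$, and the convection difference $-c_h(\widetilde{\vecb{u}},\widetilde{\vecb{u}},\vecb{\phi}_h)+c_h(\vecb{u}_h,\vecb{u}_h,\vecb{\phi}_h)$ — are each estimated so that every $\vecb{\phi}_h$-contribution is either absorbed into the coercive left-hand side or is of the form $(\text{const})\cdot E(\vecb{\phi}_h)$, ready for Gronwall.

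For the time-derivative term I would apply a $T$-weighted Young inequality,
\[
|d_h(\partial_t\vecb{\eta},\vecb{\phi}_h)|=|(\partial_t\vecb{\eta}^\mathrm{s},\vecb{\phi}_h^\mathrm{s})|\leq T\,E(\partial_t\vecb{\eta})+\tfrac{1}{T}E(\vecb{\phi}_h),
\]
which explains both the factor $T$ multiplying $E(\partial_t\vecb{\eta})$ in the stated bound and, after integrating $\tfrac1T$ over $(0,T]$, the additive constant $1$ in $K(\vecb{u},T)$. The convection difference is bounded directly by Lemma~\ref{lem:estimate_donv_difference_upwind}; crucially it contributes the negative term $-|\vecb{\phi}_h|_{\vecb{u}_h,\mathrm{uw}}^2$, which moves to the left-hand side to produce the upwind dissipation $\int_0^T|\vecb{\phi}_h|_{\vecb{u}_h,\mathrm{uw}}^2\,dt$, together with the projection remainders $C[\|\vecb{u}\|_{\vecb{L}^\infty}\|\nabla_h\vecb{\eta}\|^2+(1+h^{-2})\|\nabla\vecb{u}\|_{\vecb{L}^\infty}\|\vecb{\eta}^\mathrm{s}\|^2]$ and a term $C\|\vecb{u}\|_{\vecb{W}^{1,\infty}}\|\vecb{\phi}_h\|^2$ comparable to $C\|\vecb{u}\|_{\vecb{W}^{1,\infty}}E(\vecb{\phi}_h)$. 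Since the constant here involves only $\|\vecb{u}\|_{\vecb{W}^{1,\infty}}$ and not $\nu^{-1}$, this is precisely the source of convection-robustness.

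The delicate step is the diffusion projection term $\nu a_h(\vecb{\eta},\vecb{\phi}_h)$, because $a_h$ is not symmetric: besides $(\nabla\vecb{\eta}^\mathrm{ct},\nabla\vecb{\phi}_h^\mathrm{ct})$ it carries the antisymmetric piecewise-Laplacian couplings $-(\Delta_\text{pw}\vecb{\eta}^\mathrm{ct},\vecb{\phi}_h^\mathrm{R})+(\Delta_\text{pw}\vecb{\phi}_h^\mathrm{ct},\vecb{\eta}^\mathrm{R})$, so a plain Cauchy--Schwarz in $|||\cdot|||$ is not available and $\Delta_\text{pw}\vecb{\eta}^\mathrm{ct}$ is not even controlled under the assumed $\vecb{W}^{1,\infty}$-regularity. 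Here I would exploit the defining orthogonality of the Stokes projection, $(\nabla\vecb{\eta}^\mathrm{ct},\nabla\vecb{w})=0$ for $\vecb{w}\in\widehat{\vecb{Z}}_h$, to eliminate the leading symmetric term, and bound the remaining Laplacian couplings with elementwise inverse inequalities together with Lemma~\ref{lem:bound_vhr} and the projection estimates of Lemma~\ref{asmpt:inftybound}; this produces $C\nu|||\vecb{\eta}|||^2$ plus a fraction of $\nu|||\vecb{\phi}_h|||^2$ that is absorbed on the left. I expect this cross-term bookkeeping, and verifying that the $\vecb{\phi}_h$-remainders really are absorbable, to be the main obstacle.

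Collecting all contributions gives $\tfrac{d}{dt}E(\vecb{\phi}_h)+\tfrac\nu2|||\vecb{\phi}_h|||^2+|\vecb{\phi}_h|_{\vecb{u}_h,\mathrm{uw}}^2\leq \Phi(t)+\big(\tfrac1T+C\|\vecb{u}\|_{\vecb{W}^{1,\infty}}\big)E(\vecb{\phi}_h)$, where $\Phi(t)$ gathers the projection data. Integrating in time, using the initial condition $\vecb{\phi}_h(0)=\Pi_h^\mathrm{St}\vecb{u}(0)-\vecb{u}_h^0=\vecb{0}$ so that $E(\vecb{\phi}_h(0))=0$, and applying Gronwall's lemma yields the exponential factor $e^{K(\vecb{u},T)}$ in front of $\int_0^T\Phi\,dt$. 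Finally I would pass from $\vecb{\phi}_h$ to $\vecb{e}_h=\vecb{\eta}+\vecb{\phi}_h$ by the triangle inequality applied to the quadratic quantities $E$, $|||\cdot|||^2$ and $|\cdot|_{\vecb{u}_h,\mathrm{uw}}^2$, which produces the separate projection group $E(\vecb{\eta}(T))+\int_0^T(\nu|||\vecb{\eta}|||^2+|\vecb{\eta}|_{\vecb{u}_h,\mathrm{uw}}^2)\,dt$ and absorbs the resulting multiplicative constants into $C$ and $e^{K(\vecb{u},T)}$, giving \eqref{ieq:velocityestimate}.
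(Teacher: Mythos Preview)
Your proposal is correct and follows essentially the same route as the paper: test the error equation with $\vecb{\phi}_h$, use the $T$-weighted Young bound $|d_h(\partial_t\vecb{\eta},\vecb{\phi}_h)|\le T\,E(\partial_t\vecb{\eta})+\tfrac1T E(\vecb{\phi}_h)$, invoke Lemma~\ref{lem:estimate_donv_difference_upwind} for the convection difference (shifting $-|\vecb{\phi}_h|_{\vecb{u}_h,\mathrm{uw}}^2$ to the left), apply Gronwall with $\vecb{\phi}_h(0)=\vecb{0}$, and finish by a triangle inequality.

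The only point where you diverge from the paper is the diffusion cross-term. The paper simply records the one-line Young estimate
\[
\nu\,|a_h(\vecb{\eta},\vecb{\phi}_h)|\le C\nu\,|||\vecb{\eta}|||^2+\tfrac{\nu}{2}|||\vecb{\phi}_h|||^2
\]
without further comment, while you flag the nonsymmetry of $a_h$ and propose to use the Galerkin orthogonality of $\widehat{\Pi}_h^{\mathrm{St}}$ together with inverse inequalities and Lemma~\ref{lem:bound_vhr}. Two remarks: first, the orthogonality $(\nabla\vecb{\eta}^{\mathrm{ct}},\nabla\vecb{w})=0$ for $\vecb{w}\in\widehat{\vecb{Z}}_h$ is only available when $k\ge d$ (for $k<d$ the operator $\widehat{\Pi}_h^{\mathrm{St}}$ is a quasi-interpolation, not a Galerkin projection), so your proposed route would need a separate argument in that case; second, the orthogonality kills $(\nabla\vecb{\eta}^{\mathrm{ct}},\nabla\vecb{\phi}_h^{\mathrm{ct}})$ but does not by itself handle the coupling $(\Delta_{\mathrm{pw}}\vecb{\eta}^{\mathrm{ct}},\vecb{\phi}_h^{\mathrm{R}})$, which is the term you yourself identified as delicate. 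In short, your extra machinery is neither needed by the paper nor, as sketched, sufficient to replace the direct bound; the paper simply takes the continuity of $a_h$ in $|||\cdot|||$ for granted and moves on.
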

\begin{proof}
Taking $\vecb{v}_h=\vecb{\phi}_h$ in \eqref{eq:error_equation} gives
\begin{align}\label{eq:error_equationPhih}
\frac{d}{dt}E(\vecb{\phi}_{h})+\nu |||\vecb{\phi}_h|||^2
=-d_{h}(\partial_t\vecb{\eta},\vecb{\phi}_{h})-\nu a_h(\vecb{\eta},\vecb{\phi}_h)-c_{h}(\widetilde{\vecb{u}},\widetilde{\vecb{u}},\vecb{\phi}_{h})+c_{h}(\vecb{u}_{h},\vecb{u}_{h},\vecb{\phi}_{h}).
\end{align}
Note that
\begin{align}\label{ieq:time_estimate}
\vert d_{h}(\partial_t\vecb{\eta},\vecb{\phi}_{h})\vert\leq T E\left(\partial_t\vecb{\eta}\right)+\frac{1}{T}E\left(\vecb{\phi}_{h}\right)
\end{align}
and
\begin{align}\label{ieq:diff_estimate}
\nu\vert a_{h}(\vecb{\eta},\vecb{\phi}_{h})\vert\leq C \nu |||\vecb{\eta}|||^2+\frac{\nu}{2}|||\vecb{\phi}_h|||^2.
\end{align}

A combination of \eqref{eq:error_equationPhih}, \eqref{ieq:time_estimate}, \eqref{ieq:diff_estimate}, and \eqref{ieq:estimateNL_upwind} gives
\begin{align*}
\frac{d}{dt}E(\vecb{\phi}_{h})&+\frac{\nu}{2}|||\vecb{\phi}_{h}|||^{2}  +|\vecb{\phi}_h|_{\vecb{w}_h,\mathrm{uw}}^2  \leq  T E(\partial_t\vecb{\eta}) + \frac{1}{T}E(\vecb{\phi}_{h})+ C \nu |||\vecb{\eta}|||^2
\\ & + C[\|\vecb{u}\|_{\vecb{L}^\infty}\|\nabla_h\vecb{\eta}\|^2+(1+h^{-2})\|\nabla\vecb{u}\|_{\vecb{L}^\infty}\|\vecb{\eta}^\mathrm{s}\|^2]
 +  C\|\vecb{u}\|_{\vecb{W}^{1,\infty}} \|\vecb{\phi}_h\|^2.
\end{align*}
The Gronwall inequality, integration over $(0,T]$, and choosing
$\vecb{u}_h^0=\Pi_h^\mathrm{St}\vecb{u}(0)$ lead to
\begin{multline*}
E ( \vecb{\phi}_{h}(T))  +\int_{0}^{T} \left(\frac{\nu}{2}|||\vecb{\phi}_{h}|||^{2}+|\vecb{\phi}_h|_{\vecb{w}_h,\mathrm{uw}}^2\right) ~dt
 \leq e^{{K(\vecb{u},T)}}\int_{0}^{T}\bigg\{ T E(\partial_t\vecb{\eta})\\
+C \nu |||\vecb{\eta}|||^2+C[\|\vecb{u}\|_{\vecb{L}^\infty}\|\nabla_h\vecb{\eta}\|^2+(1+h^{-2})\|\nabla\vecb{u}\|_{\vecb{L}^\infty}\|\vecb{\eta}^\mathrm{s}\|^2]\bigg\} ~dt.
\end{multline*}
Then \eqref{ieq:velocityestimate} follows immediately.
This completes the proof.
\end{proof}
\subsection{Analysis of the scheme \eqref{eq:semischeme2}}
For the scheme \eqref{eq:semischeme2},
the error decomposition \eqref{eqn:error_decomposition}
this time leads to the error equation
\begin{equation}\label{eq:error_equation2}
\begin{aligned}
d_h(\partial_t\vecb{\phi}_h,\vecb{v}_h)&+\nu a_h(\vecb{\phi}_h,\vecb{v}_h)+\gamma\mathcal{S}(\vecb{\phi}_h,\vecb{v}_h)=-d_{h}(\vecb{\eta}_{t},\vecb{v}_{h})-\nu a_h(\vecb{\eta},\vecb{v}_h)\\
&-\gamma\mathcal{S}(\vecb{\eta},\vecb{v}_h)-c_{h}^\mathrm{vol}(\widetilde{\vecb{u}},\widetilde{\vecb{u}},\vecb{v}_{h})+c_{h}^\mathrm{vol}(\vecb{u}_{h},\vecb{u}_{h},\vecb{v}_{h})\text{ for all } \vecb{v}_h\in \vecb{Z}_h.
\end{aligned}
\end{equation}
Taking $\vecb{v}_h=\vecb{\phi}_h$ in \eqref{eq:error_equation2}
one obtains
\begin{equation}\label{eq:error_equation2phih}
\begin{aligned}
\frac{d}{dt}E(\vecb{\phi}_h)&+\nu |||\vecb{\phi}_h|||+\gamma\mathcal{S}(\vecb{\phi}_h,\vecb{\phi}_h)=-d_{h}(\vecb{\eta}_{t},\vecb{\phi}_{h})-\nu a_h(\vecb{\eta},\vecb{\phi}_h)\\
&-\gamma\mathcal{S}(\vecb{\eta},\vecb{\phi}_h)-c_{h}^\mathrm{vol}(\widetilde{\vecb{u}},\widetilde{\vecb{u}},\vecb{\phi}_{h})+c_{h}^\mathrm{vol}(\vecb{u}_{h},\vecb{u}_{h},\vecb{\phi}_{h}).
\end{aligned}
\end{equation}
According to \eqref{eq:rela_dGandvol} and the fact that
$c_{h}^\mathrm{vol}(\widetilde{\vecb{u}},\widetilde{\vecb{u}},\vecb{v}_{h})=c_{h}^\mathrm{dG}(\widetilde{\vecb{u}},\widetilde{\vecb{u}},\vecb{v}_{h})$,
we split the difference of the nonlinear terms as
\begin{equation}\label{eq:splitofchvol}
\begin{aligned}
c_{h}^\mathrm{vol}(\widetilde{\vecb{u}},\widetilde{\vecb{u}},\vecb{v}_{h})-c_{h}^\mathrm{vol}(\vecb{u}_{h},\vecb{u}_{h},\vecb{v}_{h})
&= c_{h}^\mathrm{vol}(\widetilde{\vecb{u}},\widetilde{\vecb{u}},\vecb{v}_{h})-c_{h}^\mathrm{dG}(\vecb{u}_{h},\vecb{u}_{h},\vecb{v}_{h})+
c_{h}^\mathrm{R}(\vecb{u}_{h},\vecb{u}_{h},\vecb{v}_{h})\\
& = c_{h}^\mathrm{dG}(\widetilde{\vecb{u}},\widetilde{\vecb{u}},\vecb{v}_{h})-c_{h}^\mathrm{dG}(\vecb{u}_{h},\vecb{u}_{h},\vecb{v}_{h})+
c_{h}^\mathrm{R}(\vecb{u}_{h},\vecb{u}_{h},\vecb{v}_{h}).
\end{aligned}
\end{equation}

\begin{lemma}[\cite{Schroeder_2018}]\label{lem:estimateNL_dG}
Let $\vecb{u}$ be the solution of \eqref{eq:navierstokes_weak} and $\vecb{u}_h$ be the solution of \eqref{eq:semischeme2_divfree}. Assume that $\vecb{u}\in L^{2}((0,T]; \vecb{W}^{1,\infty}(\Omega))$. Under Assumption~\ref{asmpt:inftybound0} it holds that
\begin{multline}\label{ieq:estimateNL_dG}
\lvert c_{h}^\mathrm{dG}(\widetilde{\vecb{u}},\widetilde{\vecb{u}},\vecb{\phi}_{h})-c_{h}^\mathrm{dG}(\vecb{u}_{h},\vecb{u}_{h},\vecb{\phi}_{h})\rvert\leq
\\
C [\|\vecb{u}\|_{\vecb{L}^\infty}\|\nabla_h\vecb{\eta}^\mathrm{s}\|^2+(1+h^{-2})\|\nabla\vecb{u}\|_{\vecb{L}^\infty}\|\vecb{\eta}^\mathrm{s}\|^2] +  C\|\vecb{u}\|_{\vecb{W}^{1,\infty}} \|\vecb{\phi}_h^\mathrm{s}\|^2.
\end{multline}
\end{lemma}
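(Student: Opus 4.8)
The plan is to follow the error-splitting strategy for $\vecb{H}(\mathrm{div})$-conforming DG convection forms from \cite{Schroeder_2018}, adapted to the enriched pair. Since $c_h^\mathrm{dG}$ is linear in each argument and $\widetilde{\vecb{u}}=\vecb{u}_h+\vecb{e}_h$ with $\vecb{e}_h=\vecb{\eta}+\vecb{\phi}_h$, I would first write
\[
c_h^\mathrm{dG}(\widetilde{\vecb{u}},\widetilde{\vecb{u}},\vecb{\phi}_h)-c_h^\mathrm{dG}(\vecb{u}_h,\vecb{u}_h,\vecb{\phi}_h)
= c_h^\mathrm{dG}(\vecb{e}_h,\widetilde{\vecb{u}},\vecb{\phi}_h)+c_h^\mathrm{dG}(\vecb{u}_h,\vecb{e}_h,\vecb{\phi}_h),
\]
and then expand $\vecb{e}_h=\vecb{\eta}+\vecb{\phi}_h$ to obtain four contributions whose first two slots are drawn from $\{\vecb{\eta},\vecb{\phi}_h\}$ and whose last slot is always $\vecb{\phi}_h$.

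The crucial structural step is to discard the single term whose last two arguments both equal $\vecb{\phi}_h$, namely $c_h^\mathrm{dG}(\vecb{u}_h,\vecb{\phi}_h,\vecb{\phi}_h)$. I would establish the discrete skew-symmetry $c_h^\mathrm{dG}(\vecb{w},\vecb{v},\vecb{v})=0$ for all $\vecb{w}\in\vecb{Z}(h)$, $\vecb{v}\in\vecb{V}(h)$, which is the $c_h^\mathrm{dG}$-analogue of \eqref{eq:skew-sym2}. It follows from elementwise integration by parts in $c(\vecb{w}^\mathrm{s},\vecb{v}^\mathrm{s},\vecb{v}^\mathrm{s})$: because $\mathrm{div}(\vecb{w}^\mathrm{s})=0$ and $\vecb{w}^\mathrm{s}\cdot\vecb{n}$ is single-valued across interior faces, the volume part reduces to $\sum_{F\in\mathcal{F}^0}\int_F(\vecb{w}^\mathrm{s}\cdot\vecb{n})\average{\vecb{v}^\mathrm{s}}\cdot\jump{\vecb{v}^\mathrm{s}}\ds$, and since $\jump{\vecb{v}^\mathrm{ct}}=\vecb{0}$ gives $\jump{\vecb{v}^\mathrm{s}}=\jump{\vecb{v}^\mathrm{R}}$, this cancels exactly the face term in the definition of $c_h^\mathrm{dG}$. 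With $\vecb{w}=\vecb{u}_h\in\vecb{Z}_h$ this kills the dangerous term, leaving only $c_h^\mathrm{dG}(\vecb{\eta},\widetilde{\vecb{u}},\vecb{\phi}_h)+c_h^\mathrm{dG}(\vecb{\phi}_h,\widetilde{\vecb{u}},\vecb{\phi}_h)+c_h^\mathrm{dG}(\vecb{u}_h,\vecb{\eta},\vecb{\phi}_h)$. The first two are benign: since $\widetilde{\vecb{u}}^\mathrm{R}=\vecb{0}$ and $\vecb{u}$ is continuous, the middle-slot face contributions vanish and only the plain integrals $c(\vecb{\eta}^\mathrm{s},\vecb{u},\vecb{\phi}_h^\mathrm{s})$ and $c(\vecb{\phi}_h^\mathrm{s},\vecb{u},\vecb{\phi}_h^\mathrm{s})$ survive; Hölder with $\nabla\vecb{u}\in\vecb{L}^\infty$ and Young then produce the advertised $\|\nabla\vecb{u}\|_{\vecb{L}^\infty}\|\vecb{\eta}^\mathrm{s}\|^2$ and $\|\vecb{u}\|_{\vecb{W}^{1,\infty}}\|\vecb{\phi}_h^\mathrm{s}\|^2$ contributions.

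The genuinely technical term, and the step I expect to be the main obstacle, is $c_h^\mathrm{dG}(\vecb{u}_h,\vecb{\eta},\vecb{\phi}_h)=c(\vecb{u}_h^\mathrm{s},\vecb{\eta}^\mathrm{s},\vecb{\phi}_h^\mathrm{s})-\sum_{F\in\mathcal{F}^0}\int_F(\vecb{u}_h^\mathrm{s}\cdot\vecb{n})\jump{\vecb{\eta}^\mathrm{R}}\cdot\average{\vecb{\phi}_h^\mathrm{s}}\ds$, whose transport field $\vecb{u}_h^\mathrm{s}$ is not controlled a priori. For the volume part I would insert $\vecb{u}_h^\mathrm{s}=\vecb{u}-\vecb{\eta}^\mathrm{s}-\vecb{\phi}_h^\mathrm{s}$ so that the leading piece is convected by the smooth $\vecb{u}$, yielding $\|\vecb{u}\|_{\vecb{L}^\infty}\|\nabla_h\vecb{\eta}^\mathrm{s}\|\,\|\vecb{\phi}_h^\mathrm{s}\|$ and hence the term $\|\vecb{u}\|_{\vecb{L}^\infty}\|\nabla_h\vecb{\eta}^\mathrm{s}\|^2$; the self-interaction remainders carrying $\vecb{\eta}^\mathrm{s}$ or $(\Pi_h^\mathrm{St}\vecb{u})^\mathrm{R}$ as transport field are absorbed using the $\vecb{L}^\infty$-stability of the Stokes projection \eqref{ieq:stprojection11}--\eqref{ieq:inftybound} from Lemma~\ref{asmpt:inftybound} (which rests on Assumption~\ref{asmpt:inftybound0}) together with inverse inequalities. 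The interior-face integral is where the factor $(1+h^{-2})$ originates: bounding $|\vecb{u}_h^\mathrm{s}\cdot\vecb{n}|$ in $\vecb{L}^\infty$ and applying discrete trace/inverse estimates to $\jump{\vecb{\eta}^\mathrm{R}}$ and $\average{\vecb{\phi}_h^\mathrm{s}}$, in combination with Lemma~\ref{lem:bound_vhr}, trades the $\vecb{R}$-jump for $h^{-1}$ powers of $\|\vecb{\eta}^\mathrm{s}\|$, and Young's inequality then delivers the $(1+h^{-2})\|\nabla\vecb{u}\|_{\vecb{L}^\infty}\|\vecb{\eta}^\mathrm{s}\|^2$ contribution. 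Collecting all bounds and absorbing lower-order $h$-weighted pieces gives the claim. As with the upwind counterpart in Lemma~\ref{lem:estimate_donv_difference_upwind}, this is precisely the computation of \cite[Lemma~5.5]{Schroeder_2018} specialized to the present form, the only difference being that the upwind dissipation $-|\vecb{\phi}_h|_{\vecb{u}_h,\mathrm{uw}}^2$ is absent here.
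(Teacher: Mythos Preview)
Your proposal is correct and follows essentially the same approach as the paper: the paper's proof simply points to \cite[Lemma~5.5]{Schroeder_2018} and notes that one drops the upwind contribution, which is exactly the computation you outline (splitting via $\vecb{e}_h=\vecb{\eta}+\vecb{\phi}_h$, killing $c_h^\mathrm{dG}(\vecb{u}_h,\vecb{\phi}_h,\vecb{\phi}_h)$ by skew-symmetry, and handling the remaining three terms with H\"older, inverse/trace inequalities and the $\vecb{L}^\infty$-stability of $\Pi_h^\mathrm{St}$). Your identification of the face integral in $c_h^\mathrm{dG}(\vecb{u}_h,\vecb{\eta},\vecb{\phi}_h)$ as the source of the $(1+h^{-2})$ factor and your concluding remark that this is the upwind-free version of Lemma~\ref{lem:estimate_donv_difference_upwind} match the paper exactly.
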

\begin{proof}
  The proof is similar to that of Lemma~\ref{lem:estimate_donv_difference_upwind},
  but without the estimate of the upwind term in \cite[Lemma 5.5]{Schroeder_2018}.
\end{proof}
\begin{lemma}
With the same assumption as in Lemma \ref{lem:estimateNL_dG}, it holds
\begin{align}\label{ieq:estimateNL_chr}
\lvert c_h^\mathrm{R}(\vecb{u}_h,\vecb{u}_h,\vecb{\phi}_h)\rvert
\lesssim \|\vecb{u}\|_{\vecb{L}^\infty}\|h_\mathcal{T}^{-1}\vecb{\eta}^\mathrm{R}\|\|\vecb{\phi}_h^\mathrm{R}\|
+ \|\nabla \vecb{u}\|_{\vecb{L}^\infty} (\|\vecb{\eta}^\mathrm{s}\|+\|\vecb{\phi}_h^\mathrm{s}\|)\|\vecb{\phi}_h^\mathrm{R}\|.
\end{align}
\end{lemma}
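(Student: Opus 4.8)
The plan is to exploit the vanishing enrichment component of the exact solution together with the skew-symmetry of the central-flux form $c_h^\mathrm{R}$, and then to reduce everything to H\"older estimates on the \emph{polynomial} enrichment functions $\vecb{\eta}^\mathrm{R}$ and $\vecb{\phi}_h^\mathrm{R}$. First I would rewrite the left-hand side using the linearity of $c_h^\mathrm{R}$ in its middle argument. Since the error decomposition \eqref{eqn:error_decomposition} gives $\vecb{u}_h = \widetilde{\vecb{u}} - \vecb{\eta} - \vecb{\phi}_h$ and the exact representation satisfies $\widetilde{\vecb{u}}^\mathrm{R}=\vecb{0}$, the middle-argument contribution of $\widetilde{\vecb{u}}$ drops out (both the volume term and the face jump $\jump{\widetilde{\vecb{u}}^\mathrm{R}}$ vanish), leaving
\[
c_h^\mathrm{R}(\vecb{u}_h,\vecb{u}_h,\vecb{\phi}_h) = -c_h^\mathrm{R}(\vecb{u}_h,\vecb{\eta},\vecb{\phi}_h) - c_h^\mathrm{R}(\vecb{u}_h,\vecb{\phi}_h,\vecb{\phi}_h).
\]

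Second, I would show that the last term vanishes. Because $\vecb{u}_h\in\vecb{Z}_h$ yields $\mathrm{div}(\vecb{u}_h^\mathrm{s})=0$ and $\vecb{u}_h^\mathrm{s}\cdot\vecb{n}=0$ on $\partial\Omega$, while $\vecb{\phi}_h^\mathrm{R}$ is $\vecb{H}(\mathrm{div})$-conforming so that $\jump{\vecb{\phi}_h^\mathrm{R}\cdot\vecb{n}}=0$, elementwise integration by parts combined with the identity $\jump{\vecb{a}\cdot\vecb{b}}=\jump{\vecb{a}}\cdot\average{\vecb{b}}+\average{\vecb{a}}\cdot\jump{\vecb{b}}$ shows that $c_h^\mathrm{R}$ is skew-symmetric on $\vecb{H}(\mathrm{div})$-conforming arguments; in particular $c_h^\mathrm{R}(\vecb{u}_h,\vecb{\phi}_h,\vecb{\phi}_h)=0$. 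The whole estimate therefore reduces to bounding $c_h^\mathrm{R}(\vecb{u}_h,\vecb{\eta},\vecb{\phi}_h)$.

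Third, I would split the advecting field $\vecb{u}_h^\mathrm{s}=\vecb{u}-(\vecb{\eta}^\mathrm{s}+\vecb{\phi}_h^\mathrm{s})$ in both the volume integral $((\vecb{u}_h^\mathrm{s}\cdot\nabla_h)\vecb{\eta}^\mathrm{R},\vecb{\phi}_h^\mathrm{R})$ and the face integrals. For the exact part $\vecb{u}$ I would place $\vecb{u}$ in $\vecb{L}^\infty$ and the two enrichment functions in $\vecb{L}^2$; since $\vecb{\eta}^\mathrm{R},\vecb{\phi}_h^\mathrm{R}$ are piecewise polynomial, an inverse inequality converts $\|\nabla_h\vecb{\eta}^\mathrm{R}\|$ into $\|h_\mathcal{T}^{-1}\vecb{\eta}^\mathrm{R}\|$, and discrete trace inequalities convert $\|\jump{\vecb{\eta}^\mathrm{R}}\|_{L^2(F)}$ and $\|\average{\vecb{\phi}_h^\mathrm{R}}\|_{L^2(F)}$ into scaled volume norms, giving the first term $\|\vecb{u}\|_{\vecb{L}^\infty}\|h_\mathcal{T}^{-1}\vecb{\eta}^\mathrm{R}\|\|\vecb{\phi}_h^\mathrm{R}\|$. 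For the discrete error part $-(\vecb{\eta}^\mathrm{s}+\vecb{\phi}_h^\mathrm{s})$, which is only $\vecb{L}^2$-controlled, I would instead put $\nabla_h\vecb{\eta}^\mathrm{R}$ (resp. the jump $\jump{\vecb{\eta}^\mathrm{R}}$) in $\vecb{L}^\infty$, invoking \eqref{ieq:inftybound} of Lemma~\ref{asmpt:inftybound} in the form $\|h_\mathcal{T}^{-1}\vecb{\eta}^\mathrm{R}\|_{\vecb{L}^\infty}=\|h_\mathcal{T}^{-1}(\Pi_h^\mathrm{St}\vecb{u})^\mathrm{R}\|_{\vecb{L}^\infty}\lesssim\|\nabla\vecb{u}\|_{\vecb{L}^\infty}$, together with an inverse inequality. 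This yields the second term $\|\nabla\vecb{u}\|_{\vecb{L}^\infty}(\|\vecb{\eta}^\mathrm{s}\|+\|\vecb{\phi}_h^\mathrm{s}\|)\|\vecb{\phi}_h^\mathrm{R}\|$.

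The main obstacle is the face-integral bookkeeping for the discrete-error part of the advection: there the advecting field carries the \emph{non-polynomial} component $\vecb{\eta}^\mathrm{ct}=\vecb{u}-\widehat{\Pi}_h^\mathrm{St}\vecb{u}$, so its face norm must be estimated by a continuous trace inequality rather than a discrete inverse trace inequality, which produces an auxiliary factor of the type $h\|\nabla(\vecb{\eta}^\mathrm{s}+\vecb{\phi}_h^\mathrm{s})\|$. One then has to argue that this is of the same order as $\|\vecb{\eta}^\mathrm{s}\|+\|\vecb{\phi}_h^\mathrm{s}\|$, using an inverse inequality on the polynomial parts $\vecb{\phi}_h^\mathrm{s},\vecb{\eta}^\mathrm{R}$ and the approximation bounds of Lemma~\ref{asmpt:inftybound} on $\vecb{\eta}^\mathrm{ct}$, so that it is absorbed into the stated right-hand side. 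The other delicate point is the exact skew-symmetry in the second step, where one must check that all contributions on $\partial\Omega$ vanish thanks to $\vecb{u}_h^\mathrm{s}\cdot\vecb{n}=0$ there.
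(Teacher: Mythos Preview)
Your proposal is correct and, once unwound, coincides with the paper's proof. The paper decomposes
\[
-c_h^\mathrm{R}(\vecb{u}_h,\vecb{u}_h,\vecb{\phi}_h)
= c_h^\mathrm{R}(\widetilde{\vecb{u}},\vecb{\eta},\vecb{\phi}_h)
+ c_h^\mathrm{R}(\vecb{\eta},\Pi_h^\mathrm{St}\vecb{u},\vecb{\phi}_h)
+ c_h^\mathrm{R}(\vecb{\phi}_h,\Pi_h^\mathrm{St}\vecb{u},\vecb{\phi}_h),
\]
using $c_h^\mathrm{R}(\vecb{u}_h,\vecb{\phi}_h,\vecb{\phi}_h)=0$. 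Your route---first removing $\widetilde{\vecb{u}}$ from the middle slot and then splitting $\vecb{u}_h^\mathrm{s}=\vecb{u}-\vecb{\eta}^\mathrm{s}-\vecb{\phi}_h^\mathrm{s}$ in the first slot---produces the three terms $c_h^\mathrm{R}(\widetilde{\vecb{u}},\vecb{\eta},\vecb{\phi}_h)$, $c_h^\mathrm{R}(\vecb{\eta},\vecb{\eta},\vecb{\phi}_h)$, $c_h^\mathrm{R}(\vecb{\phi}_h,\vecb{\eta},\vecb{\phi}_h)$. Since $c_h^\mathrm{R}$ sees only the enrichment part of its middle argument and $\vecb{\eta}^\mathrm{R}=-(\Pi_h^\mathrm{St}\vecb{u})^\mathrm{R}$, these are \emph{literally} the same three terms; your subsequent H\"older/inverse/trace estimates and the appeal to \eqref{ieq:inftybound} are exactly what the paper does. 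You also make explicit the face-integral issue with the non-polynomial component $\vecb{\eta}^\mathrm{ct}$ of the advecting field (resolved via the continuous trace inequality and the approximation bounds of Lemma~\ref{asmpt:inftybound}), which the paper's estimate of $c_h^\mathrm{R}(\vecb{\eta},\Pi_h^\mathrm{St}\vecb{u},\vecb{\phi}_h)$ passes over.
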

\begin{proof}
Note that $\widetilde{\vecb{u}}^\mathrm{R}=\vecb{0}$ and $c_h^\mathrm{R}(\vecb{w}_h,\bullet,\bullet)$ is also skew-symmetric as long as $\mathrm{div}(\vecb{w}_h^\mathrm{s})=0$. We have
\begin{align*}
-c_h^\mathrm{R}(\vecb{u}_h,\vecb{u}_h,\vecb{\phi}_h)& =c_h^\mathrm{R}(\widetilde{\vecb{u}},\widetilde{\vecb{u}},\vecb{\phi}_h)-c_h^\mathrm{R}(\vecb{u}_h,\vecb{u}_h,\vecb{\phi}_h)
\\ & =c_h^\mathrm{R}(\widetilde{\vecb{u}},\vecb{\eta},\vecb{\phi}_h)
+c_h^\mathrm{R}(\widetilde{\vecb{u}},\Pi_h^\mathrm{St}\vecb{u},\vecb{\phi}_h)-c_h^\mathrm{R}(\vecb{u}_h,\vecb{u}_h,\vecb{\phi}_h)
\\ & =c_h^\mathrm{R}(\widetilde{\vecb{u}},\vecb{\eta},\vecb{\phi}_h)
+c_h^\mathrm{R}(\vecb{e}_h,\Pi_h^\mathrm{St}\vecb{u},\vecb{\phi}_h)+c_h^\mathrm{R}(\vecb{u}_h,\vecb{\phi}_h,\vecb{\phi}_h)
\\ & = c_h^\mathrm{R}(\widetilde{\vecb{u}},\vecb{\eta},\vecb{\phi}_h)
+c_h^\mathrm{R}(\vecb{\eta},\Pi_h^\mathrm{St}\vecb{u},\vecb{\phi}_h)+c_h^\mathrm{R}(\vecb{\phi}_h,\Pi_h^\mathrm{St}\vecb{u},\vecb{\phi}_h),
\end{align*}
where the final equality follows from the fact $c_h^\mathrm{R}(\vecb{u}_h,\vecb{\phi}_h,\vecb{\phi}_h)=0$.
By the H\"{o}lder inequality, trace inequality, inverse inequality, and \eqref{ieq:inftybound} it holds
\begin{align*}
|c_h^\mathrm{R}(\widetilde{\vecb{u}},\vecb{\eta},\vecb{\phi}_h)| & \lesssim \|\vecb{u}\|_{\vecb{L}^\infty} \|\nabla_h \vecb{\eta}^\mathrm{R}\|\|\vecb{\phi}_h^\mathrm{R}\| + \|\vecb{u}\|_{\vecb{L}^\infty}\sum_{F\in\mathcal{F}^0}h_F^{-\frac{1}{2}}\|[[\vecb{\eta}^\mathrm{R}]]\|_{F}h_F^{\frac{1}{2}}\|[[\vecb{\phi}_h^\mathrm{R}]]\|_F\\
& \lesssim \|\vecb{u}\|_{\vecb{L}^\infty}(\|h_\mathcal{T}^{-1}\vecb{\eta}^\mathrm{R}\|+\|\nabla_h \vecb{\eta}^\mathrm{R}\|)\|\vecb{\phi}_h^\mathrm{R}\|
  \lesssim \|\vecb{u}\|_{\vecb{L}^\infty}\|h_\mathcal{T}^{-1}\vecb{\eta}^\mathrm{R}\|\|\vecb{\phi}_h^\mathrm{R}\|,
\end{align*}
\begin{align*}
\vert c_h^\mathrm{R}(\vecb{\eta},\Pi_h^\mathrm{St}\vecb{u},\vecb{\phi}_h)\vert
&     \lesssim \|\nabla_h(\Pi_h^\mathrm{St}\vecb{u})^\mathrm{R}\|_{\vecb{L}^\infty} \|\vecb{\eta}^\mathrm{s}\|\|\vecb{\phi}_h^\mathrm{R}\|\lesssim \|h_\mathcal{T}^{-1}(\Pi_h^\mathrm{St}\vecb{u})^\mathrm{R}\|_{\vecb{L}^\infty} \|\vecb{\eta}^\mathrm{s}\|\|\vecb{\phi}_h^\mathrm{R}\|
\\ &  \lesssim \|\nabla \vecb{u}\|_{\vecb{L}^\infty} \|\vecb{\eta}^\mathrm{s}\|\|\vecb{\phi}_h^\mathrm{R}\|,
\end{align*}
and
\begin{align*}
|c_h^\mathrm{R}(\vecb{\phi}_h,\Pi_h^\mathrm{St}\vecb{u},\vecb{\phi}_h)|
& \lesssim \|\nabla_h(\Pi_h^\mathrm{St}\vecb{u})^\mathrm{R}\|_{\vecb{L}^\infty} \|\vecb{\phi}_h^\mathrm{s}\|\|\vecb{\phi}_h^\mathrm{R}\| + \|(\Pi_h^\mathrm{St}\vecb{u})^\mathrm{R}\|_{\vecb{L}^\infty}\sum_{F\in\mathcal{F}^0}\|\vecb{\phi}_h^\mathrm{s}\cdot\vecb{n}\|_{F}\|[[\vecb{\phi}_h^\mathrm{R}]]\|_F\\
& \lesssim \|h_\mathcal{T}^{-1}(\Pi_h^\mathrm{St}\vecb{u})^\mathrm{R}\|_{\vecb{L}^\infty} \Big(\|\vecb{\phi}_h^\mathrm{s}\|\|\vecb{\phi}_h^\mathrm{R}\| + \sum_{F\in\mathcal{F}^0}h_F\|\vecb{\phi}_h^\mathrm{s}\cdot\vecb{n}\|_{F}
\|[[\vecb{\phi}_h^\mathrm{R}]]\|_F\Big)\\
& \lesssim \|\nabla \vecb{u}\|_{\vecb{L}^\infty} \|\vecb{\phi}_h^\mathrm{s}\|\|\vecb{\phi}_h^\mathrm{R}\|.
\end{align*}
Then \eqref{ieq:estimateNL_chr} follows immediately. This completes the proof.
\end{proof}
The terms produced by $c_h^\mathrm{R}$ in Lemma~\ref{ieq:estimateNL_chr} can be
stabilized by one of the two stabilizations $\mathcal{S}_1$ or $\mathcal{S}_2$.
For $\mathcal{S}=\mathcal{S}_1$, one can employ the Gronwall inequality.
For $\mathcal{S}=\mathcal{S}_2$, techniques similar
to the ones from the analysis of grad-div stabilizations \cite{DGJN18}
are applicable. In preparation for that,
the following
lemma states an intermediate estimate for the full convection term.
\begin{lemma}\label{lem:estimateNL_chr12}
In view of the two stabilizations $\mathcal{S}_1$ and $\mathcal{S}_2$, the following two
estimates hold:
\begin{multline*}
\lvert c_h^\mathrm{R}(\vecb{u}_h,\vecb{u}_h,\vecb{\phi}_h)\rvert
\lesssim
\begin{cases}
\|\vecb{u}\|_{\vecb{L}^\infty}\|h_\mathcal{T}^{-1} \vecb{\eta}^\mathrm{R}\|^2
+\|\nabla \vecb{u}\|_{\vecb{L}^\infty} (\|\vecb{\eta}^\mathrm{s}\|^2+\|\vecb{\phi}_h^\mathrm{s}\|^2)+\|\vecb{u}\|_{\vecb{W}^{1,\infty}}\|\vecb{\phi}_h^\mathrm{R}\|^2
& \text{for } \mathcal{S}_1,\\
\frac{1}{\gamma}\|\vecb{u}\|_{\vecb{L}^\infty}^2\|h_\mathcal{T}^{-\frac{1}{2}} \vecb{\eta}^\mathrm{R}\|^2+\frac{1}{\gamma}\|h_\mathcal{T}^{\frac{1}{2}}\nabla \vecb{u}\|_{\vecb{L}^\infty}^2 \left(\|\vecb{\eta}^\mathrm{s}\|^2+\|\vecb{\phi}_h^\mathrm{s}\|^2\right)+\frac{\gamma}{4}\|h_\mathcal{T}^{-\frac{1}{2}}\vecb{\phi}_h^\mathrm{R}\|^2
& \text{for } \mathcal{S}_2.
\end{cases}
\end{multline*}
According to \eqref{eq:splitofchvol},
a combination with Lemma~\ref{lem:estimateNL_dG} immediately implies
\begin{equation}\label{ieq:estimateNL_chr1}
\begin{aligned}
\lvert c_{h}^\mathrm{vol}(\widetilde{\vecb{u}},&\widetilde{\vecb{u}},\vecb{\phi}_{h})
-c_{h}^\mathrm{vol}(\vecb{u}_{h},\vecb{u}_{h},\vecb{\phi}_{h})\rvert
 \lesssim \|\vecb{u}\|_{\vecb{L}^\infty}\left(\|\nabla_h \vecb{\eta}^\mathrm{s}\|^2+\|h_\mathcal{T}^{-1} \vecb{\eta}^\mathrm{R}\|^2\right)\\
& +(1+h^{-2})\|\nabla\vecb{u}\|_{\vecb{L}^\infty}\|\vecb{\eta}^\mathrm{s}\|^2
  +\|\vecb{u}\|_{\vecb{W}^{1,\infty}} \left(\|\vecb{\phi}_h^\mathrm{s}\|^2+\|\vecb{\phi}_h^\mathrm{R}\|^2\right),
\end{aligned}
\end{equation}
and
\begin{equation}\label{ieq:estimateNL_chr2}
  \begin{aligned}
&\lvert c_{h}^\mathrm{vol}(\widetilde{\vecb{u}},
\widetilde{\vecb{u}},\vecb{\phi}_{h})
 -c_{h}^\mathrm{vol}(\vecb{u}_{h},\vecb{u}_{h},\vecb{\phi}_{h}) \rvert
  \lesssim \|\vecb{u}\|_{\vecb{L}^\infty}\|\nabla_h \vecb{\eta}^\mathrm{s}\|^2\\
&+\frac{1}{\gamma}\|\vecb{u}\|_{\vecb{L}^\infty}^2\|h_\mathcal{T}^{-\frac{1}{2}}\vecb{\eta}^\mathrm{R}\|^2
+\frac{1}{\gamma}\|h_\mathcal{T}^{\frac{1}{2}}\nabla \vecb{u}\|_{\vecb{L}^\infty}^2 \|\vecb{\eta}^\mathrm{s}\|^2
 +(1+h^{-2})\|\nabla\vecb{u}\|_{\vecb{L}^\infty}\|\vecb{\eta}^\mathrm{s}\|^2\\
&+\left(\|\vecb{u}\|_{\vecb{W}^{1,\infty}}+\frac{1}{\gamma}\|h_\mathcal{T}^{\frac{1}{2}}\nabla \vecb{u}\|_{\vecb{L}^\infty}^2\right) \|\vecb{\phi}_h^\mathrm{s}\|^2+\frac{\gamma}{4} \|h_\mathcal{T}^{-\frac{1}{2}}\vecb{\phi}_h^\mathrm{R}\|^2.
  \end{aligned}
\end{equation}
\end{lemma}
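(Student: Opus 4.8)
The plan is to deduce both estimates directly from the bound \eqref{ieq:estimateNL_chr} of the preceding lemma, splitting each of its three products with Young's inequality, but choosing the weights differently for $\mathcal{S}_1$ and $\mathcal{S}_2$ so that the critical factor $\vecb{\phi}_h^\mathrm{R}$ emerges in the form that the respective stabilization on the left-hand side of the error identity \eqref{eq:error_equation2phih} can absorb. Throughout I would freeze the data norms $\|\vecb{u}\|_{\vecb{L}^\infty}$ and $\|\nabla\vecb{u}\|_{\vecb{L}^\infty}$ and never hide them in the implicit constant.

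For $\mathcal{S}=\mathcal{S}_1$ the stabilization contributes $\mathcal{S}_1(\vecb{\phi}_h,\vecb{\phi}_h)=\tfrac12\tfrac{d}{dt}\|\vecb{\phi}_h^\mathrm{R}\|^2$ to the energy balance, so a plain (unweighted) Young inequality $ab\le\tfrac12 a^2+\tfrac12 b^2$ applied to each product of \eqref{ieq:estimateNL_chr} suffices. The first product gives $\|\vecb{u}\|_{\vecb{L}^\infty}\|h_\mathcal{T}^{-1}\vecb{\eta}^\mathrm{R}\|^2$ together with a multiple of $\|\vecb{u}\|_{\vecb{L}^\infty}\|\vecb{\phi}_h^\mathrm{R}\|^2$; the two products coming from $\|\nabla\vecb{u}\|_{\vecb{L}^\infty}(\|\vecb{\eta}^\mathrm{s}\|+\|\vecb{\phi}_h^\mathrm{s}\|)\|\vecb{\phi}_h^\mathrm{R}\|$ give $\|\nabla\vecb{u}\|_{\vecb{L}^\infty}(\|\vecb{\eta}^\mathrm{s}\|^2+\|\vecb{\phi}_h^\mathrm{s}\|^2)$ together with a further multiple of $\|\nabla\vecb{u}\|_{\vecb{L}^\infty}\|\vecb{\phi}_h^\mathrm{R}\|^2$. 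Collecting the two $\vecb{\phi}_h^\mathrm{R}$-contributions into one term bounded by $\|\vecb{u}\|_{\vecb{W}^{1,\infty}}\|\vecb{\phi}_h^\mathrm{R}\|^2$ then reproduces the first asserted estimate; leaving $\vecb{\phi}_h^\mathrm{R}$ unweighted is precisely what makes this term amenable to the later Gronwall argument.

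For $\mathcal{S}=\mathcal{S}_2$ the left-hand side instead supplies $\gamma\mathcal{S}_2(\vecb{\phi}_h,\vecb{\phi}_h)=\gamma\|h_\mathcal{T}^{-1/2}\vecb{\phi}_h^\mathrm{R}\|^2$, so the aim is to have the $\vecb{\phi}_h^\mathrm{R}$-factor appear as $\tfrac{\gamma}{4}\|h_\mathcal{T}^{-1/2}\vecb{\phi}_h^\mathrm{R}\|^2$. I would therefore carry out the Young splitting elementwise, inserting a balanced power of $h_T$ before splitting: on each $T\in\mathcal{T}$ I write $h_T^{-1}\LZNorm{\vecb{\eta}^\mathrm{R}}{T}\LZNorm{\vecb{\phi}_h^\mathrm{R}}{T}=\bigl(h_T^{-1/2}\LZNorm{\vecb{\eta}^\mathrm{R}}{T}\bigr)\bigl(h_T^{-1/2}\LZNorm{\vecb{\phi}_h^\mathrm{R}}{T}\bigr)$ and $\LINorm{\nabla\vecb{u}}{T}\LZNorm{\vecb{\eta}^\mathrm{s}}{T}\LZNorm{\vecb{\phi}_h^\mathrm{R}}{T}=\bigl(h_T^{1/2}\LINorm{\nabla\vecb{u}}{T}\LZNorm{\vecb{\eta}^\mathrm{s}}{T}\bigr)\bigl(h_T^{-1/2}\LZNorm{\vecb{\phi}_h^\mathrm{R}}{T}\bigr)$, and analogously for the $\vecb{\phi}_h^\mathrm{s}$-product. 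Applying $ab\le\tfrac{3}{\gamma}a^2+\tfrac{\gamma}{12}b^2$ to all three products and summing over $T$, the three identical $b^2$-terms combine to exactly $\tfrac{\gamma}{4}\|h_\mathcal{T}^{-1/2}\vecb{\phi}_h^\mathrm{R}\|^2$, while the $a^2$-terms, after bounding $h_T\LINorm{\nabla\vecb{u}}{T}^2$ by the global weighted norm $\|h_\mathcal{T}^{1/2}\nabla\vecb{u}\|_{\vecb{L}^\infty}^2$, give, up to the absolute factor that is absorbed in $\lesssim$, the terms $\tfrac1\gamma\|\vecb{u}\|_{\vecb{L}^\infty}^2\|h_\mathcal{T}^{-1/2}\vecb{\eta}^\mathrm{R}\|^2$ and $\tfrac1\gamma\|h_\mathcal{T}^{1/2}\nabla\vecb{u}\|_{\vecb{L}^\infty}^2(\|\vecb{\eta}^\mathrm{s}\|^2+\|\vecb{\phi}_h^\mathrm{s}\|^2)$, which is the second estimate.

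Finally, the combined bounds \eqref{ieq:estimateNL_chr1} and \eqref{ieq:estimateNL_chr2} follow by inserting the two results into the decomposition \eqref{eq:splitofchvol} and adding the estimate of Lemma~\ref{lem:estimateNL_dG} for the $c_h^\mathrm{dG}$-difference; in the $\mathcal{S}_1$ case the extra $\|\nabla\vecb{u}\|_{\vecb{L}^\infty}\|\vecb{\eta}^\mathrm{s}\|^2$ and $\|\nabla\vecb{u}\|_{\vecb{L}^\infty}\|\vecb{\phi}_h^\mathrm{s}\|^2$ are absorbed into the already present $(1+h^{-2})\|\nabla\vecb{u}\|_{\vecb{L}^\infty}\|\vecb{\eta}^\mathrm{s}\|^2$ and $\|\vecb{u}\|_{\vecb{W}^{1,\infty}}\|\vecb{\phi}_h^\mathrm{s}\|^2$ terms, respectively. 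I expect the only genuine difficulty to be the bookkeeping of the mesh weights in the $\mathcal{S}_2$ case: the powers of $h_T$ must be distributed so that the surviving $\vecb{\phi}_h^\mathrm{R}$-term carries exactly the scaling $h_\mathcal{T}^{-1/2}$ of the stabilization together with a coefficient strictly below $\gamma$, since otherwise the subsequent absorption into $\gamma\mathcal{S}_2(\vecb{\phi}_h,\vecb{\phi}_h)$ would fail.
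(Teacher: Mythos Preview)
Your proposal is correct and follows the same route as the paper's proof, which merely says that the estimates ``follow from \eqref{ieq:estimateNL_chr} and some differently weighted Young inequalities'' and then invokes Lemma~\ref{lem:estimateNL_dG} together with a triangle inequality. Your write-up is in fact more explicit: you spell out the unweighted Young splitting for $\mathcal{S}_1$, the elementwise $h_T^{\pm 1/2}$-balanced Young splitting for $\mathcal{S}_2$ (which is indeed the point where the bookkeeping matters, since the global product $\|h_\mathcal{T}^{-1}\vecb{\eta}^\mathrm{R}\|\,\|\vecb{\phi}_h^\mathrm{R}\|$ cannot be rebalanced to $\|h_\mathcal{T}^{-1/2}\vecb{\eta}^\mathrm{R}\|\,\|h_\mathcal{T}^{-1/2}\vecb{\phi}_h^\mathrm{R}\|$ without passing through the local contributions), and the absorption of the leftover $\mathcal{S}_1$ terms into the larger ones coming from Lemma~\ref{lem:estimateNL_dG}. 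The only implicit step is that your elementwise splitting for $\mathcal{S}_2$ uses a local version of \eqref{ieq:estimateNL_chr}; this is harmless because the proof of that lemma already proceeds term by term via local H\"older, trace, and inverse inequalities.
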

\begin{proof}
  Estimates \eqref{ieq:estimateNL_chr1} and \eqref{ieq:estimateNL_chr2} follow from
  \eqref{ieq:estimateNL_chr} and some differently weighted Young inequalities. The other estimates
  follow with Lemma~\ref{lem:estimateNL_dG} and a triangle inequality.
\end{proof}

\begin{theorem}\label{thm:velocityestimate2}
Let $\vecb{u}$ be the solution of \eqref{eq:navierstokes_weak} and $\vecb{u}_h$ be the solution of \eqref{eq:semischeme2_divfree}. Assume that $\vecb{u}\in L^{2}((0,T]; \vecb{W}^{1,\infty}(\Omega))$ and $\vecb{u}_{h}^{0}=\Pi_{h}^\mathrm{St}\vecb{u}(0)$. Under Assumption~\ref{asmpt:inftybound0} it holds for $\mathcal{S}=\mathcal{S}_1$:
\begin{equation}\label{ieq:velocityestimate21}
\begin{aligned}
& E(\vecb{e}_{h}(T))+\frac{\gamma}{2}\|\vecb{e}_h^\mathrm{R}(T)\|^2+\int_{0}^{T}\nu|||\vecb{e}_{h}|||^{2} ~dt \leq
E(\vecb{\eta}(T))+\frac{\gamma}{2}\|\vecb{\eta}^\mathrm{R}(T)\|^2\\
& \qquad +\int_{0}^{T}\nu|||\vecb{\eta}|||^{2} ~dt+e^{{K_1(\vecb{u},T)}}\int_{0}^{T}\bigg\{ T E(\partial_t\vecb{\eta})+C \nu |||\vecb{\eta}|||^2 + \frac{\gamma T}{2}\|\partial_t\vecb{\eta}^\mathrm{R}\|^2\\
& \qquad +C \|\vecb{u}\|_{\vecb{L}^\infty}\left(\|\nabla_h \vecb{\eta}^\mathrm{s}\|^2+\|h_\mathcal{T}^{-1} \vecb{\eta}^\mathrm{R}\|^2\right)
   + C (1+h^{-2})\|\nabla\vecb{u}\|_{\vecb{L}^\infty}\|\vecb{\eta}^\mathrm{s}\|^2\bigg\} ~dt,
\end{aligned}
\end{equation}
and for $\mathcal{S}=\mathcal{S}_2$:
\begin{equation}\label{ieq:velocityestimate22}
\begin{aligned}
& E(\vecb{e}_{h}(T))+\int_{0}^{T}\nu|||\vecb{e}_{h}|||^{2} + \frac{\gamma}{2}\|h_\mathcal{T}^{-\frac{1}{2}}\vecb{e}_h^\mathrm{R}\|^2 ~dt \leq
E(\vecb{\eta}(T))+\int_{0}^{T}\nu|||\vecb{\eta}|||^{2}\,dt\\
& \qquad +\int_{0}^{T}\frac{\gamma}{2}\|h_\mathcal{T}^{-\frac{1}{2}}\vecb{\eta}^\mathrm{R}\|^2 ~dt+e^{{K_2(\vecb{u},T)}}\int_{0}^{T}\bigg\{ T E(\partial_t\vecb{\eta})+C \nu |||\vecb{\eta}|||^2 + \frac{\gamma}{2}\|h_\mathcal{T}^{-\frac{1}{2}}\vecb{\eta}^\mathrm{R}\|^2\\
& \qquad +C \Big[\|\vecb{u}\|_{\vecb{L}^\infty}\|\nabla_h \vecb{\eta}^\mathrm{s}\|^2
  +\frac{1}{\gamma}\|\vecb{u}\|_{\vecb{L}^\infty}^2\|h_\mathcal{T}^{-\frac{1}{2}}\vecb{\eta}^\mathrm{R}\|^2+\frac{1}{\gamma}\|h_\mathcal{T}^{\frac{1}{2}}\nabla \vecb{u}\|_{\vecb{L}^\infty}^2 \|\vecb{\eta}^\mathrm{s}\|^2\\
& \qquad +(1+h^{-2})\|\nabla\vecb{u}\|_{\vecb{L}^\infty}\|\vecb{\eta}^\mathrm{s}\|^2\Big]\bigg\}\,dt,
\end{aligned}
\end{equation}
where
\begin{align*}
K_1(\boldsymbol{u},T)&:=1+\gamma/2+C(1+\frac{1}{\gamma})\|\vecb{u}\|_{L^{1}((0,T];\vecb{W}^{1,\infty}(\Omega))},\\
K_2(\boldsymbol{u},T)&:=1+C\Big(\|\vecb{u}\|_{L^{1}((0,T];\vecb{W}^{1,\infty}(\Omega))}+\frac{1}{\gamma}\|h_\mathcal{T}^{\frac{1}{2}}\nabla \vecb{u}\|_{L^{2}((0,T];\vecb{L}^{\infty}(\Omega))}\Big).
\end{align*}
\end{theorem}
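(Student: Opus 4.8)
The proof runs in close parallel to that of Theorem~\ref{thm:velocityestimate}, the only genuinely new ingredients being the stabilization term $\gamma\mathcal{S}(\vecb{\phi}_h,\vecb{\phi}_h)$ and the replacement of the upwind convection bound by the splitting \eqref{eq:splitofchvol} together with Lemma~\ref{lem:estimateNL_chr12}. The starting point is the tested error identity \eqref{eq:error_equation2phih}. I would treat the two stabilization choices in parallel: bound the linear right-hand side contributions $d_h(\partial_t\vecb{\eta},\vecb{\phi}_h)$ and $\nu a_h(\vecb{\eta},\vecb{\phi}_h)$ exactly as in \eqref{ieq:time_estimate} and \eqref{ieq:diff_estimate}, insert the matching nonlinear estimate from Lemma~\ref{lem:estimateNL_chr12} (namely \eqref{ieq:estimateNL_chr1} for $\mathcal{S}_1$ and \eqref{ieq:estimateNL_chr2} for $\mathcal{S}_2$), absorb all $\vecb{\phi}_h$-terms into the left-hand side, apply Gronwall's lemma, integrate over $(0,T]$, and finally pass from $\vecb{\phi}_h$ to $\vecb{e}_h=\vecb{\eta}+\vecb{\phi}_h$ by a triangle inequality.

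The decisive difference between the two cases is the algebraic role of the stabilization. For $\mathcal{S}=\mathcal{S}_1$ one has $\gamma\mathcal{S}_1(\vecb{\phi}_h,\vecb{\phi}_h)=\frac{\gamma}{2}\frac{d}{dt}\|\vecb{\phi}_h^\mathrm{R}\|^2$, so this term augments the time-derivative part on the left and, after integration, yields the quantity $\frac{\gamma}{2}\|\vecb{\phi}_h^\mathrm{R}(T)\|^2$; the corresponding right-hand contribution $\gamma\mathcal{S}_1(\vecb{\eta},\vecb{\phi}_h)=\gamma(\partial_t\vecb{\eta}^\mathrm{R},\vecb{\phi}_h^\mathrm{R})$ is split by Young's inequality into the data term $\frac{\gamma T}{2}\|\partial_t\vecb{\eta}^\mathrm{R}\|^2$ appearing in \eqref{ieq:velocityestimate21} and a multiple of $\|\vecb{\phi}_h^\mathrm{R}\|^2$ that feeds the Gronwall argument. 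For $\mathcal{S}=\mathcal{S}_2$ the term $\gamma\mathcal{S}_2(\vecb{\phi}_h,\vecb{\phi}_h)=\gamma\|h_\mathcal{T}^{-1/2}\vecb{\phi}_h^\mathrm{R}\|^2$ is coercive and stays on the left; the right-hand contribution $\gamma\mathcal{S}_2(\vecb{\eta},\vecb{\phi}_h)$ is controlled by a weighted Young inequality producing a data term $\gamma\|h_\mathcal{T}^{-1/2}\vecb{\eta}^\mathrm{R}\|^2$ and a part $\frac{\gamma}{4}\|h_\mathcal{T}^{-1/2}\vecb{\phi}_h^\mathrm{R}\|^2$. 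This last part, together with the $\frac{\gamma}{4}\|h_\mathcal{T}^{-1/2}\vecb{\phi}_h^\mathrm{R}\|^2$ generated by \eqref{ieq:estimateNL_chr2}, is absorbed into the coercive left-hand term, leaving exactly $\frac{\gamma}{2}\|h_\mathcal{T}^{-1/2}\vecb{\phi}_h^\mathrm{R}\|^2$ on the left.

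With these preparations the differential inequality for $E(\vecb{\phi}_h)$ (plus the retained stabilization energy) has a data part collecting all $\vecb{\eta}$-contributions and a feedback part of the form $g(t)\,(E(\vecb{\phi}_h)+\ldots)$. The crucial observation is that the coefficients multiplying $\|\vecb{\phi}_h^\mathrm{s}\|^2$ and $\|\vecb{\phi}_h^\mathrm{R}\|^2$ in \eqref{ieq:estimateNL_chr1} and \eqref{ieq:estimateNL_chr2}, namely $\|\vecb{u}\|_{\vecb{W}^{1,\infty}}$ and $\frac{1}{\gamma}\|h_\mathcal{T}^{1/2}\nabla\vecb{u}\|_{\vecb{L}^\infty}^2$, carry no negative power of $\nu$, while the diffusion term $\frac{\nu}{2}|||\vecb{\phi}_h|||^2$ remains coercive on the left and $C\nu|||\vecb{\eta}|||^2$ enters only as data. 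This is precisely what makes the accumulated Gronwall factors $K_1(\vecb{u},T)$ and $K_2(\vecb{u},T)$ independent of $\nu$, i.e. convection-robust; the $\frac{1}{\gamma}$-dependence in $K_1$ arises from rewriting $\|\vecb{\phi}_h^\mathrm{R}\|^2$ through the retained stabilization energy $\frac{\gamma}{2}\|\vecb{\phi}_h^\mathrm{R}\|^2$, whereas in $K_2$ the feedback coefficient $\frac{1}{\gamma}\|h_\mathcal{T}^{1/2}\nabla\vecb{u}\|^2$ accumulates as the $L^2$-in-time norm. Choosing $\vecb{u}_h^0=\Pi_h^\mathrm{St}\vecb{u}(0)$ gives $\vecb{\phi}_h(0)=\vecb{0}$, so Gronwall's lemma after integration yields the asserted bounds expressed through $\vecb{\phi}_h$, and the triangle inequality then moves the $\vecb{\eta}$-quantities to the right-hand side to produce \eqref{ieq:velocityestimate21} and \eqref{ieq:velocityestimate22}.

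I expect the main obstacle to be the careful bookkeeping of constants in the absorption step, most delicately for $\mathcal{S}_2$, where the $\frac{\gamma}{4}$ produced by \eqref{ieq:estimateNL_chr2}, the $\frac{\gamma}{4}$ produced by Young's inequality applied to $\gamma\mathcal{S}_2(\vecb{\eta},\vecb{\phi}_h)$, and the coercive $\gamma$ on the left must be balanced to leave a clean $\frac{\gamma}{2}$, and in assembling $K_2$ so that the $\frac{1}{\gamma}\|h_\mathcal{T}^{1/2}\nabla\vecb{u}\|^2$ feedback enters with the correct norm. Everything else, including the separation of data and feedback terms and the concluding triangle inequality, is a direct transcription of the argument underlying Theorem~\ref{thm:velocityestimate}.
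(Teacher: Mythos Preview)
Your proposal is correct and follows essentially the same route as the paper's proof: start from \eqref{eq:error_equation2phih}, reuse \eqref{ieq:time_estimate} and \eqref{ieq:diff_estimate}, insert the appropriate bound \eqref{ieq:estimateNL_chr1} or \eqref{ieq:estimateNL_chr2} from Lemma~\ref{lem:estimateNL_chr12}, handle the stabilization cross term by Young's inequality, absorb, apply Gronwall with $\vecb{\phi}_h(0)=\vecb{0}$, and finish by triangle inequality. The only cosmetic difference is in the $\mathcal{S}_2$ case: the paper splits $\gamma\mathcal{S}_2(\vecb{\eta},\vecb{\phi}_h)$ with equal weights, obtaining $\tfrac{\gamma}{2}\|h_\mathcal{T}^{-1/2}\vecb{\eta}^\mathrm{R}\|^2+\tfrac{\gamma}{2}\|h_\mathcal{T}^{-1/2}\vecb{\phi}_h^\mathrm{R}\|^2$ and hence only $\tfrac{\gamma}{4}\|h_\mathcal{T}^{-1/2}\vecb{\phi}_h^\mathrm{R}\|^2$ on the left after absorption, whereas your weighted split leaves $\tfrac{\gamma}{2}$; both are admissible and lead to the same estimate up to harmless constants.
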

\begin{proof}
Direct calculations imply
\begin{align}
&\mathcal{S}_1(\vecb{\phi}_h,\vecb{\phi}_h)=\frac{1}{2}\frac{d}{dt}\|\vecb{\phi}_h^\mathrm{R}\|^2,         &&   \vert \mathcal{S}_1(\vecb{\eta},\vecb{\phi}_h)\vert\leq
\frac{T}{2}\|\partial_t\vecb{\eta}^\mathrm{R}\|^2+\frac{1}{2T}\|\vecb{\phi}_h^\mathrm{R}\|^2, \label {ieq:estimate_S1}\\
&\mathcal{S}_2(\vecb{\phi}_h,\vecb{\phi}_h)= \|h_\mathcal{T}^{-\frac{1}{2}}\vecb{\phi}_h^\mathrm{R}\|^2,   &&   \vert \mathcal{S}_2(\vecb{\eta},\vecb{\phi}_h) \vert \leq
\frac{1}{2}\|h_\mathcal{T}^{-\frac{1}{2}}\vecb{\eta}^\mathrm{R}\|^2+\frac{1}{2}\|h_\mathcal{T}^{-\frac{1}{2}}\vecb{\phi}_h^\mathrm{R}\|^2. \label {ieq:estimate_S2}
\end{align}
For $i=1$ (i.e., $\mathcal{S}=\mathcal{S}_1$), a combination of \eqref{eq:error_equation2phih}, \eqref{ieq:time_estimate}, \eqref{ieq:diff_estimate}, \eqref{ieq:estimate_S1}, and \eqref{ieq:estimateNL_chr1} gives
\begin{multline*}
\frac{d}{dt}E(\vecb{\phi}_{h})+\frac{\gamma}{2}\frac{d}{dt}\|\vecb{\phi}_h^\mathrm{R}\|^2
   +\frac{\nu}{2}|||\vecb{\phi}_{h}|||^{2}
    \leq  T E(\partial_t\vecb{\eta}) + \frac{1}{T}E(\vecb{\phi}_{h})+ C \nu |||\vecb{\eta}|||^2\\
\begin{aligned}
& \qquad + \frac{\gamma T}{2}\|\partial_t\vecb{\eta}^\mathrm{R}\|^2+\frac{\gamma}{2T}\|\vecb{\phi}_h^\mathrm{R}\|^2 +  C \|\vecb{u}\|_{\vecb{L}^\infty}\left(\|\nabla_h \vecb{\eta}^\mathrm{s}\|^2+\|h_\mathcal{T}^{-1} \vecb{\eta}^\mathrm{R}\|^2\right)\\
& \qquad + C (1+h^{-2})\|\nabla\vecb{u}\|_{\vecb{L}^\infty}\|\vecb{\eta}^\mathrm{s}\|^2
  + C \|\vecb{u}\|_{\vecb{W}^{1,\infty}} \left(\|\vecb{\phi}_h^\mathrm{s}\|^2+\|\vecb{\phi}_h^\mathrm{R}\|^2\right).
\end{aligned}
\end{multline*}
Since $\|\vecb{\phi}_h^\mathrm{s}\|^2\leq 2E(\vecb{\phi}_h)$ and
$\|\vecb{\phi}_h^\mathrm{R}\|^2 = \frac{2}{\gamma} * \frac{\gamma}{2}\|\vecb{\phi}_h^\mathrm{R}\|^2$,
the Gronwall lemma yields
\begin{multline*}
E(\vecb{\phi}_{h}(T))
  +\frac{\gamma}{2}\|\vecb{\phi}_h^\mathrm{R}(T)\|^2
   +\int_0^T\frac{\nu}{2}|||\vecb{\phi}_{h}|||^{2}\,dt\\
\begin{aligned}
    & \leq  e^{{K_1(\vecb{u},T)}}\int_{0}^{T}\bigg\{ T E(\partial_t\vecb{\eta})+C \nu |||\vecb{\eta}|||^2
    + \frac{\gamma T}{2}\|\partial_t\vecb{\eta}^\mathrm{R}\|^2 \\
& \qquad +C \|\vecb{u}\|_{\vecb{L}^\infty}\left(\|\nabla_h \vecb{\eta}^\mathrm{s}\|^2+\|h_\mathcal{T}^{-1} \vecb{\eta}^\mathrm{R}\|^2\right)
   + C (1+h^{-2})\|\nabla\vecb{u}\|_{\vecb{L}^\infty}\|\vecb{\eta}^\mathrm{s}\|^2\bigg\} ~dt.
\end{aligned}
\end{multline*}
Then \eqref{ieq:velocityestimate21} follows.
For $i=2$ (i.e., $\mathcal{S}=\mathcal{S}_2$), a combination of
\eqref{eq:error_equation2phih}, \eqref{ieq:time_estimate},
\eqref{ieq:diff_estimate}, \eqref{ieq:estimate_S2}, and
\eqref{ieq:estimateNL_chr2} leads to
\begin{align*}
& \frac{d}{dt}E(\vecb{\phi}_{h})
  +\frac{\nu}{2}|||\vecb{\phi}_{h}|||^{2}+\frac{\gamma}{4}\|h_\mathcal{T}^{-\frac{1}{2}}\vecb{\phi}_h^\mathrm{R}\|^2
    \leq  T E(\partial_t\vecb{\eta}) + \frac{1}{T}E(\vecb{\phi}_{h})+ C \nu |||\vecb{\eta}|||^2
\\ &\ + \frac{\gamma}{2}\|h_\mathcal{T}^{-\frac{1}{2}}\vecb{\eta}^\mathrm{R}\|^2 +  C \Big[\|\vecb{u}\|_{\vecb{L}^\infty}\|\nabla_h \vecb{\eta}^\mathrm{s}\|^2
  +\frac{1}{\gamma}\|\vecb{u}\|_{\vecb{L}^\infty}^2\|h_\mathcal{T}^{-\frac{1}{2}}\vecb{\eta}^\mathrm{R}\|^2+\frac{1}{\gamma}\|h_\mathcal{T}^{\frac{1}{2}}\nabla \vecb{u}\|_{\vecb{L}^\infty}^2 \|\vecb{\eta}^\mathrm{s}\|^2\\
&\ +(1+h^{-2})\|\nabla\vecb{u}\|_{\vecb{L}^\infty}\|\vecb{\eta}^\mathrm{s}\|^2
  +\Big(\|\vecb{u}\|_{\vecb{W}^{1,\infty}}+\frac{1}{\gamma}\|h_\mathcal{T}^{\frac{1}{2}}\nabla \vecb{u}\|_{\vecb{L}^\infty}^2\Big) \|\vecb{\phi}_h^\mathrm{s}\|^2\Big].
\end{align*}
Also by Gronwall lemma we have
\begin{multline*}
E(\vecb{\phi}_{h}(T))
 +\int_0^T\frac{\nu}{2}|||\vecb{\phi}_{h}|||^{2}+\frac{\gamma}{4}\|h_\mathcal{T}^{-\frac{1}{2}}\vecb{\phi}_h^\mathrm{R}\|^2\,dt\\
\begin{aligned}
& \leq  e^{{K_2(\vecb{u},T)}}\int_{0}^{T}\bigg\{ T E(\partial_t\vecb{\eta})+C \nu |||\vecb{\eta}|||^2+ \frac{\gamma}{2}\|h_\mathcal{T}^{-\frac{1}{2}}\vecb{\eta}^\mathrm{R}\|^2\\
& \qquad +C \Big[\|\vecb{u}\|_{\vecb{L}^\infty}\|\nabla_h \vecb{\eta}^\mathrm{s}\|^2
  +\frac{1}{\gamma}\|\vecb{u}\|_{\vecb{L}^\infty}^2\|h_\mathcal{T}^{-\frac{1}{2}}\vecb{\eta}^\mathrm{R}\|^2+\frac{1}{\gamma}\|h_\mathcal{T}^{\frac{1}{2}}\nabla \vecb{u}\|_{\vecb{L}^\infty}^2 \|\vecb{\eta}^\mathrm{s}\|^2\\
& \qquad +(1+h^{-2})\|\nabla\vecb{u}\|_{\vecb{L}^\infty}\|\vecb{\eta}^\mathrm{s}\|^2\Big]\bigg\}\,dt.
\end{aligned}
\end{multline*}
Then \eqref{ieq:velocityestimate22} follows. This completes the proof.
\end{proof}

\section{The reduced scheme}\label{sec:reduced_scheme}
This section discusses a possible condensation of all enrichment
and all higher order pressure degrees of freedom. Here we proceed
similarly to the Stokes case explained in \cite{JLMR2022}.

In order to remove all higher order pressure degrees of freedom
a larger enrichment space $\vecb{V}_h^\mathrm{R}$ has to be used, i.e.,
\begin{align*}
  \vecb{V}_h^\mathrm{R}:=\begin{cases}
  \widetilde{\vecb{RT}}_{k-1}^{\mathrm{int}}(\mathcal{T}) \quad & k\geq d,\\
  (\vecb{RT}_{0}(\mathcal{T})\cap \vecb{H}_0(\mathrm{div},\Omega))\oplus\widetilde{\vecb{RT}}_{k-1}^{\mathrm{int}}(\mathcal{T})\quad & k<d.
  \end{cases}
\end{align*}

The condensation of $\vecb{RT}_0$ DoFs and higher order Raviart--Thomas
bubbles are based on different principles. For the $\vecb{RT}_0$ DoFs,
from \cite[Lemma 3.2]{li2021low} it is not hard to see that the mass
lumping for the $\vecb{RT}_0-\vecb{RT}_0$ block from the time and
diffusion discretizations does not affect the accuracy. Then the
static condensation of $\vecb{RT}_0$ unknowns is possible in case
that the discretization of the nonlinear term does not contribute
to the $\vecb{RT}_0-\vecb{RT}_0$ block, e.g., $c_h(\cdot,\cdot,\cdot)$
is linearized
a with a Picard iteration. For higher order Raviart--Thomas bubbles, based on the
fact that the divergence operator on the chosen higher order enrichment
space is injective, this part of the velocity solution is indeed uniquely
determined by the divergence of the $\vecb{H}^1$ part due to the
divergence constraint. Therefore for each $\vecb{H}^1$ shape function
there is a unique attached Raviart--Thomas function to eliminate the
higher order part of its divergence. Seeking the solution in a subspace
spanned by these Raviart--Thomas attached $\vecb{H}^1$ basis functions
leads to a reduced $\vecb{P}_k-P_0$ scheme, where the higher order
pressure DoFs are no longer needed as Lagrange multipliers. In conclusion,
the condensation of $\vecb{RT}_0$ unknowns is due to the diagonal matrix
structure of the $\vecb{RT}_0-\vecb{RT}_0$ block, while the condensation
of the higher order Raviart--Thomas bubbles, as well as higher order
pressure DoFs, is due to the incompressibility constraint, which is
always available as long as the model is incompressible.
Also note, that the sparsity pattern of the resulting system is the same
as that of a classical $\vecb{P}_k \times P_0$ method for $k \geq d$,
because the interior $\vecb{RT}$ bubbles on one cell do not couple with
the all degrees of freedom of any other cell in the original method.

In what follows the implementation of the reduced scheme on the
algebraic level is shortly discussed.
For simplicity we use the case $k \geq d$ and the case where
the nonlinear term is treated explicitly as an example.

To handle the nonlinear term, one possibility is to put it
explicitly on the right-hand side, such that the matrix stays
constant throughout the whole simulation (if the time step is constant).
Then, algebraically the full linear system in each nonlinear iteration
within a time step has the form
\begin{align*}
  \begin{pmatrix}
    D_{\mathrm{c}\mathrm{c}} + A_{\mathrm{c}\mathrm{c}} & (D_{\mathrm{R}\mathrm{c}} + A_{\mathrm{R}\mathrm{c}})^{\top} & B_\mathrm{c}^{\top} \\
    D_{\mathrm{R}\mathrm{c}} - A_{\mathrm{R}\mathrm{c}} & D_{\mathrm{R}\mathrm{R}} + A_{\mathrm{R}\mathrm{R}} + S_{\mathrm{R}\mathrm{R}}& B_\mathrm{R}^{\top} \\
    B_\mathrm{c} & B_\mathrm{R} & 0 \\
  \end{pmatrix}
  \begin{pmatrix}
    U_\mathrm{c}\\
    U_\mathrm{R}\\
    P
  \end{pmatrix}
  =
  \begin{pmatrix}
    F_\mathrm{c}\\
    F_\mathrm{R}\\
    0
  \end{pmatrix}.
  \end{align*}
  Here, the $A$-blocks refer to the linear Stokes operators, the
  $D$-blocks refer to the mass matrix and $F$ contains all
  right-hand side terms from $f$, the nonlinear term and the
  previous time step. The matrix $S_{RR}$ refers to the used
  stabilization $S_1$ or $S_2$ for the enrichment part.

  Following \cite{JLMR2022} we employ a reconstruction operator
  $\mathcal{R} : \vecb{V}_h^\mathrm{ct} \rightarrow \vecb{V}_h^\mathrm{R} $
  and its representation matrix $R$. Recall,
  that the enrichment part is uniquely determined by
  $U_\mathrm{R} = - R U_\mathrm{c}$.
  With that, the problem can be reduced to solving
  \begin{multline*}
    \begin{pmatrix}
      D_{\mathrm{c}\mathrm{c}} + A_{\mathrm{c}\mathrm{c}} + (D_{\mathrm{R}\mathrm{c}} - A_{\mathrm{R}\mathrm{c}})^{\top}R + R^{\top}(D_{\mathrm{R}\mathrm{c}} + A_{\mathrm{R}\mathrm{c}}) & B_\mathrm{0,c}^{\top} \\
      B_\mathrm{0,c} & 0
    \end{pmatrix}
    \begin{pmatrix}
      U_\mathrm{c}\\
      P_0
    \end{pmatrix}
    =
    \begin{pmatrix}
      F_\mathrm{c} - R^{\top} F_\mathrm{R}\\
      0
    \end{pmatrix}.
    \end{multline*}
    Here, $P_0$ refers to the piecewise constant pressure
    and $B_{0,\mathrm{ct}}$ to the div-pressure matrix
    between velocities of $\vecb{V}_h^\mathrm{c}$ and $P_0$ pressures.
    Also the full pressure can be recovered by solving small local problems.
    All details can be found in \cite{JLMR2022} as there are no additional
    difficulties if the nonlinear term is handled explicitly in the
    right-hand side.

\section{Numerical Examples}

This section demonstrates the performance of the proposed schemes
in two benchmark examples. Concerning the nonlinear solver
a Picard iteration scheme was employed and terminated
when the difference to the last iteration
in the $H^1$-norm is below the tolerance $10^{-8}$.
The enrichment spaces $\vecb{V}_h^\mathrm{R}$ are built from the
basis functions constructed in \cite{JLMR2022}.

\label{sec:num}
\subsection{Example 1}
The first example from \cite[Example 4.2]{GJN21} takes the inhomogeneous data such that it matches the flow
\begin{align*}
  \vecb{u}(x,y,t) & = 2 \pi \sin(\pi t)
  \begin{pmatrix}
    \sin(\pi x)^2 \sin(\pi y) \cos(\pi y) \\
    -\sin(\pi y)^2 \sin(\pi x) \cos(\pi x)
  \end{pmatrix},\\
    p(x,y,t) & = 20 \sin(\pi t)(x^2 y - 1/6)
\end{align*}
for $\nu = 10^{-6}$ and computes the discrete $\vecb{u}_h$ and $p_h$ with the Crank--Nicolson scheme with time step $\tau = 10^{-4}$ until $T = 3$
unstructured meshes, the initial mesh is depicted in Figure~\ref{fig:ex1_meshlevel0}. The parameter of $\mathcal{S}$, $\gamma$, is taken as 1 always.
Figures~\ref{fig:ex1_convratep2}-\ref{fig:ex1_convratep4} display results for orders $k \in \lbrace 2,3,4 \rbrace$
for the maximal $L^2$ error in time and the full estimated norm in Theorems~\ref{thm:velocityestimate} and \ref{thm:velocityestimate2}, i.e,
\begin{align*}
  ||| \vecb{u} |||^2_\star
  := \| \vecb{u}^\mathrm{s} \|^2 + \nu \int_0^T \| \nabla \vecb{u}^\mathrm{ct} \|^2 dt
  + \begin{cases}
     \| \vecb{u}^\mathrm{R} \|^2 & \text{ for } \eqref{eq:semischeme2} \text{ with }\mathcal{S}_1,\\
     \int_0^T \| h_\mathcal{T}^\frac{1}{2} \nabla \cdot \vecb{u}^\mathrm{R} \|^2 dt & \text{ for } \eqref{eq:semischeme2} \text{ with } \mathcal{S}_2,\\
     \int_0^T \lvert \vecb{u} \rvert^2_{\textbf{u}_h, \mathrm{uw}} dt & \text{ for upwind DG.}
  \end{cases}
\end{align*}
It can be seen that all stabilizations attain at least their estimated
convergence orders and that the upwind stabilization yields the best results
among the three tested stabilizations. The upwind stabilization even shows
a better convergence rate pre-asymptotically.
The easier to implement stabilization $\mathcal{S}_2$ yields the
second-best results. Surprisingly, the asymptotic convergence rates
for $k=3$ are much better than expected from the theory.

\begin{figure}
  \centering
  \includegraphics[width=0.49\textwidth]{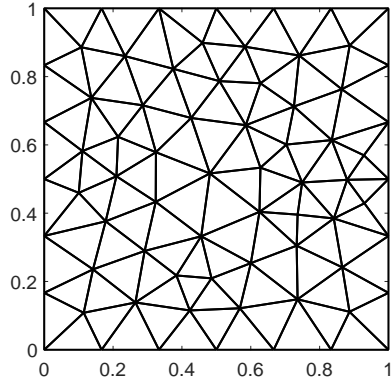}
  \caption{\label{fig:ex1_meshlevel0}Initial grid for Example 1.}
\end{figure}

\begin{figure}
  \includegraphics[width=0.49\textwidth]{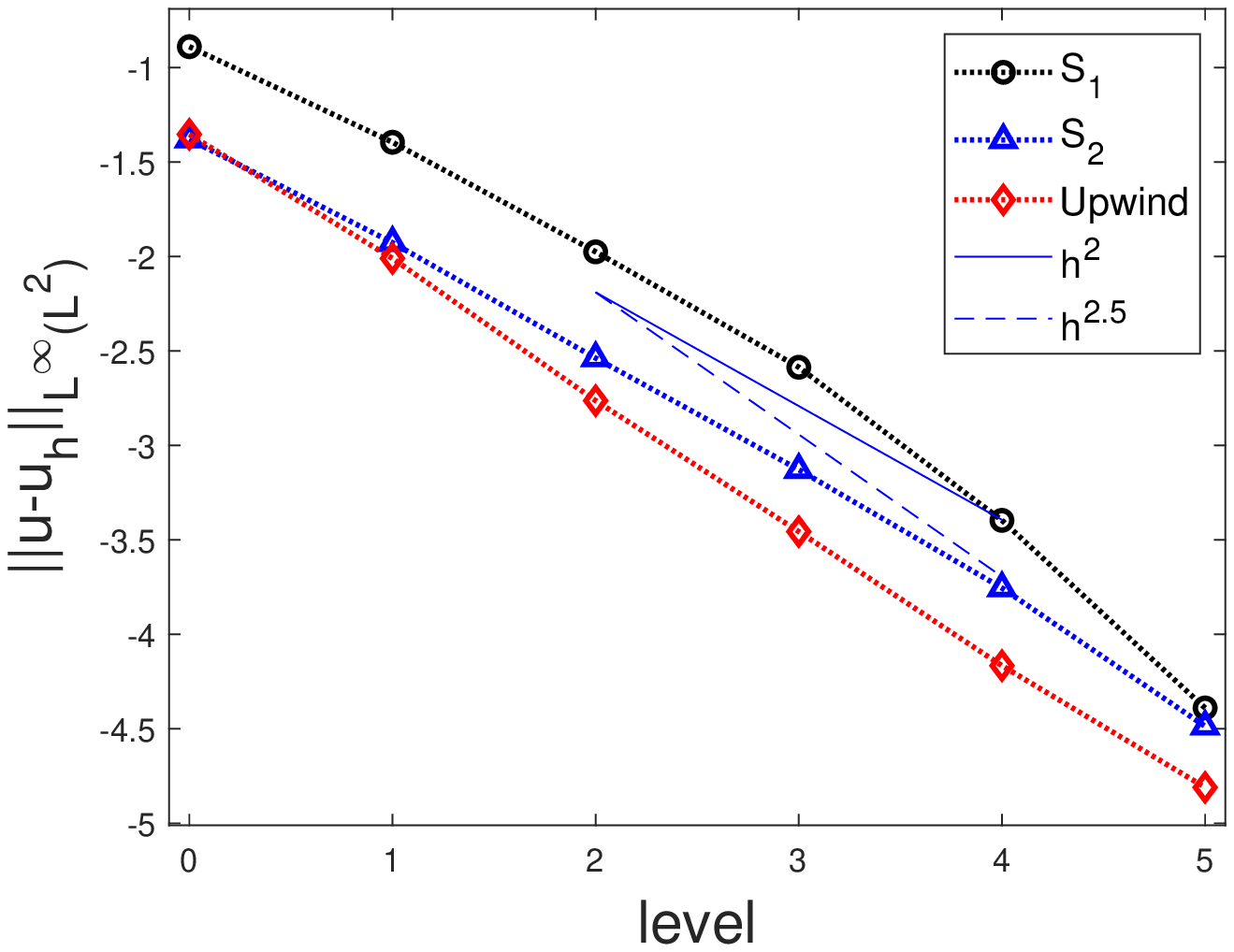}
  \includegraphics[width=0.49\textwidth]{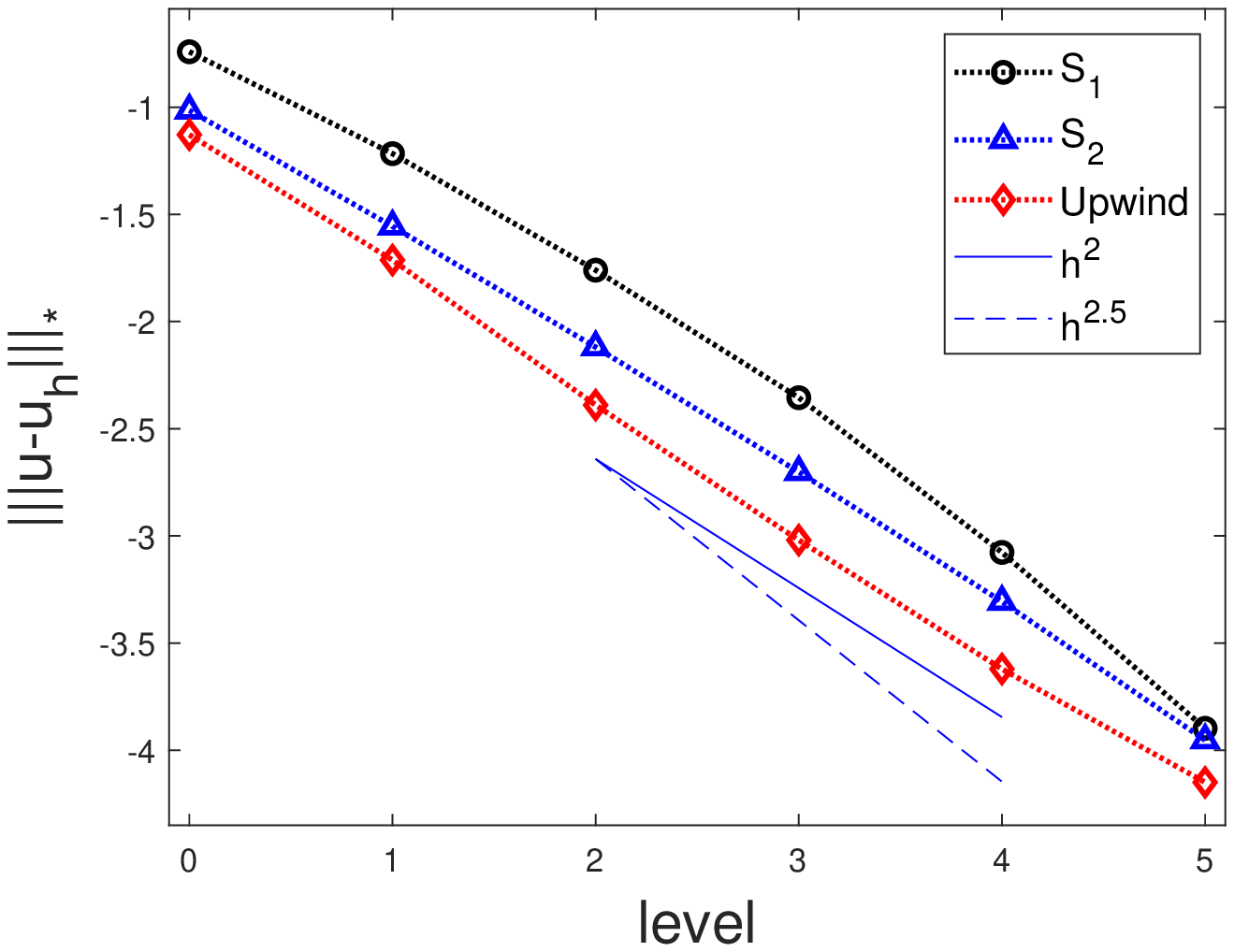}
  \caption{\label{fig:ex1_convratep2}Example 1: Convergence rates of $\|\vecb{u}-\vecb{u}_h^\mathrm{s}\|_{L^\infty(L^2)}$ (energy error, left) and $|||(\vecb{u},0)-\vecb{u}_h|||_\star$ (right) from the estimate with $\nu=10^{-6}$, $\Delta t = 10^{-4}$, and $T=3$ for the reduced $P_2-P_0$ scheme.}
\end{figure}
\begin{figure}
  \includegraphics[width=0.49\textwidth]{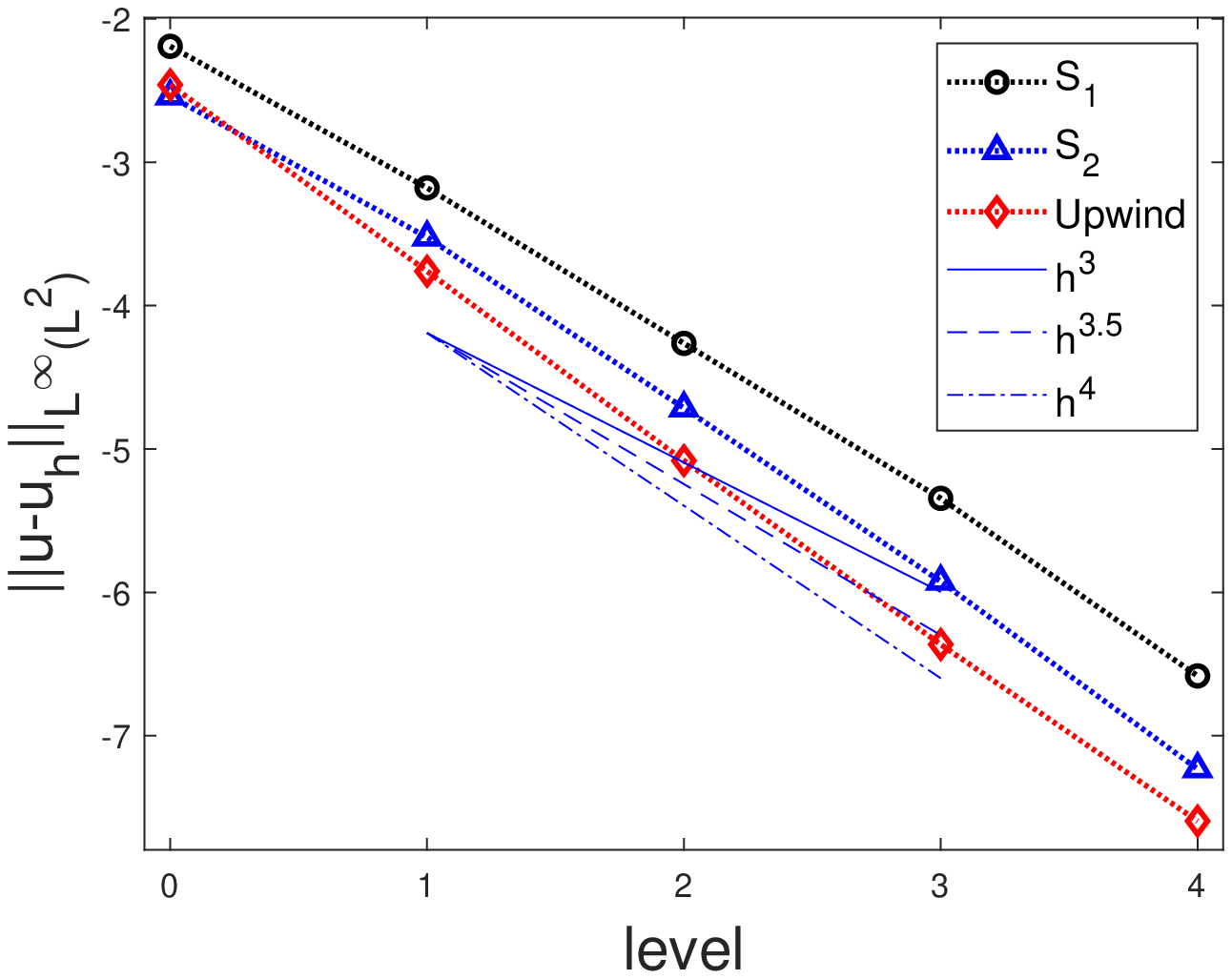}
  \includegraphics[width=0.49\textwidth]{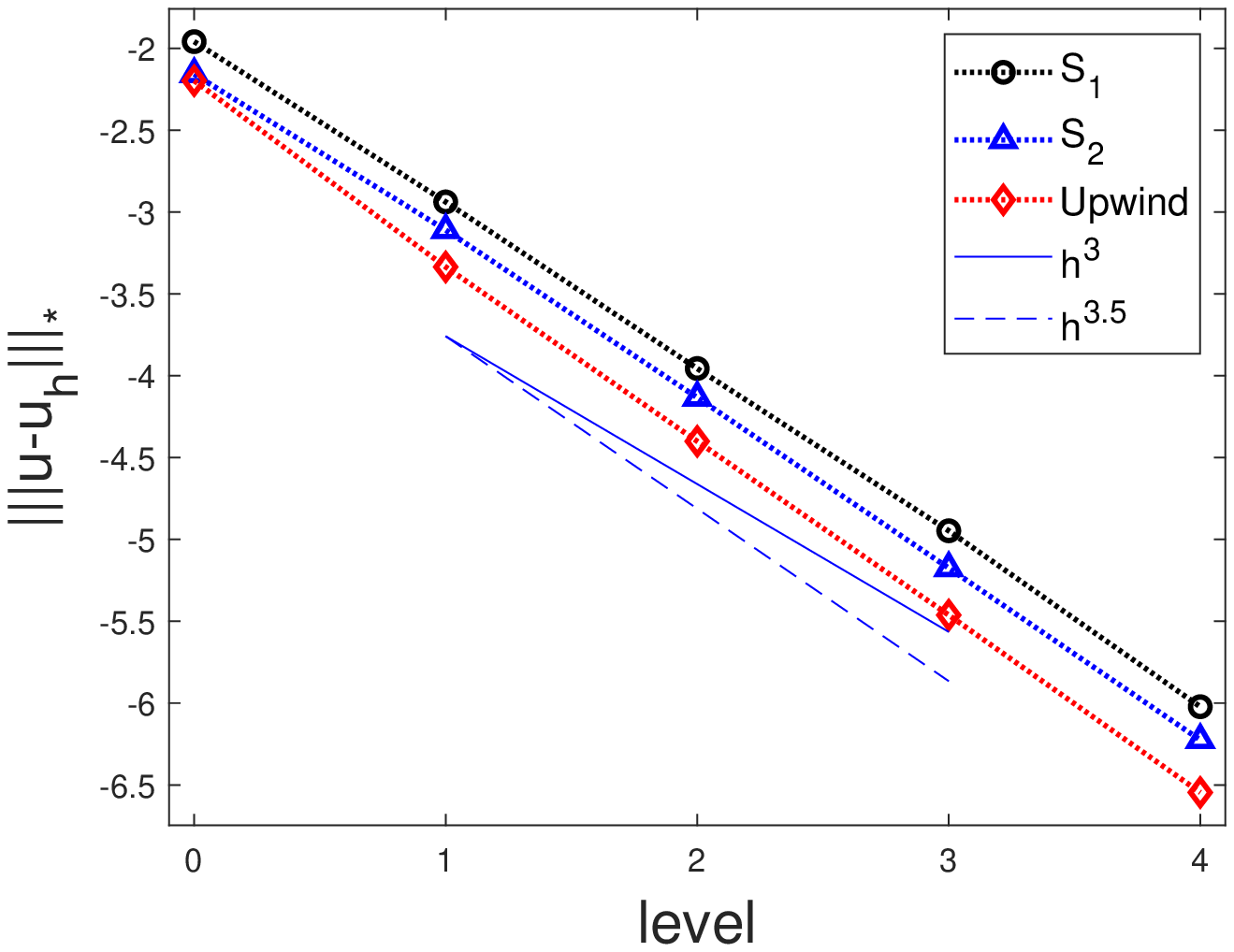}
  \caption{\label{fig:ex1_convratep3}Example 1: Convergence rates of $\|\vecb{u}-\vecb{u}_h^\mathrm{s}\|_{L^\infty(L^2)}$ (energy error, left) and $|||(\vecb{u},0)-\vecb{u}_h|||_\star$ (right) from the estimate with $\nu=10^{-6}$, $\Delta t = 10^{-4}$, and $T=3$ for the reduced $P_3-P_0$ scheme.}
\end{figure}
\begin{figure}
  \includegraphics[width=0.49\textwidth]{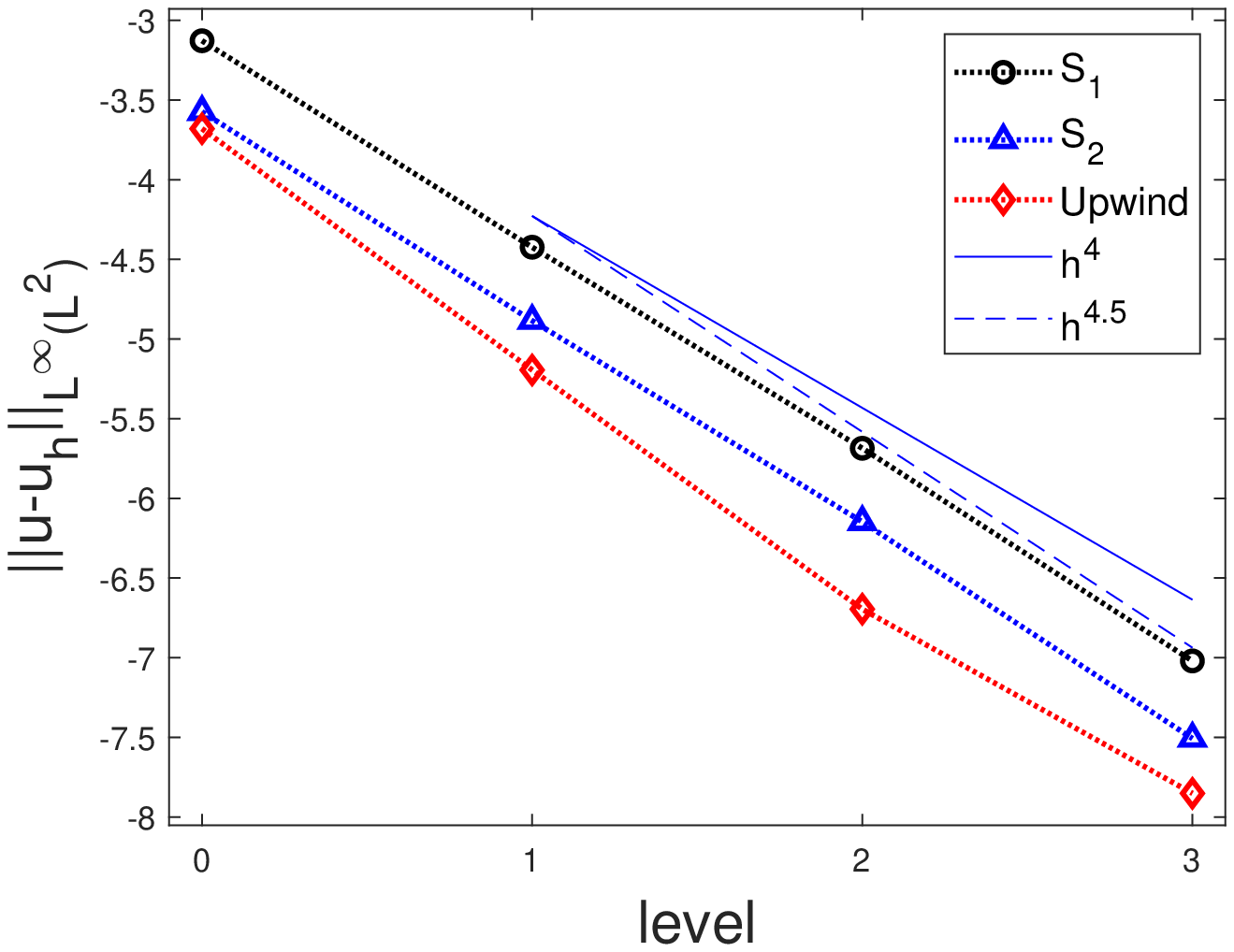}
  \includegraphics[width=0.49\textwidth]{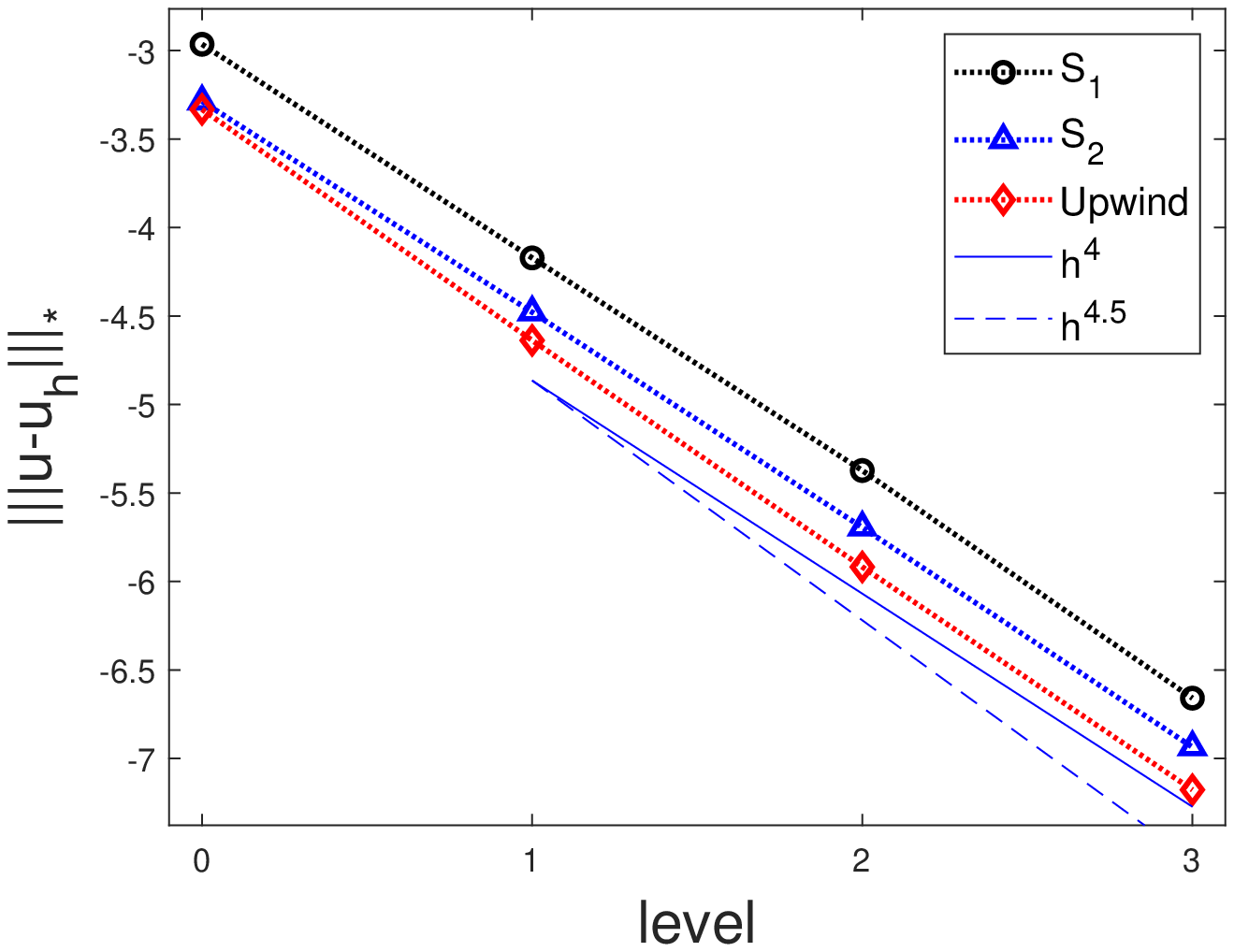}
  \caption{\label{fig:ex1_convratep4}Example 1: Convergence rates of $\|\vecb{u}-\vecb{u}_h^\mathrm{s}\|_{L^\infty(L^2)}$ (energy error, left) and $|||(\vecb{u},0)-\vecb{u}_h|||_\star$ (right) from the estimate with $\nu=10^{-6}$, $\Delta t = 10^{-4}$, and $T=3$ for the reduced $P_4-P_0$ scheme.}
\end{figure}
\begin{figure}
  \includegraphics[width=0.49\textwidth]{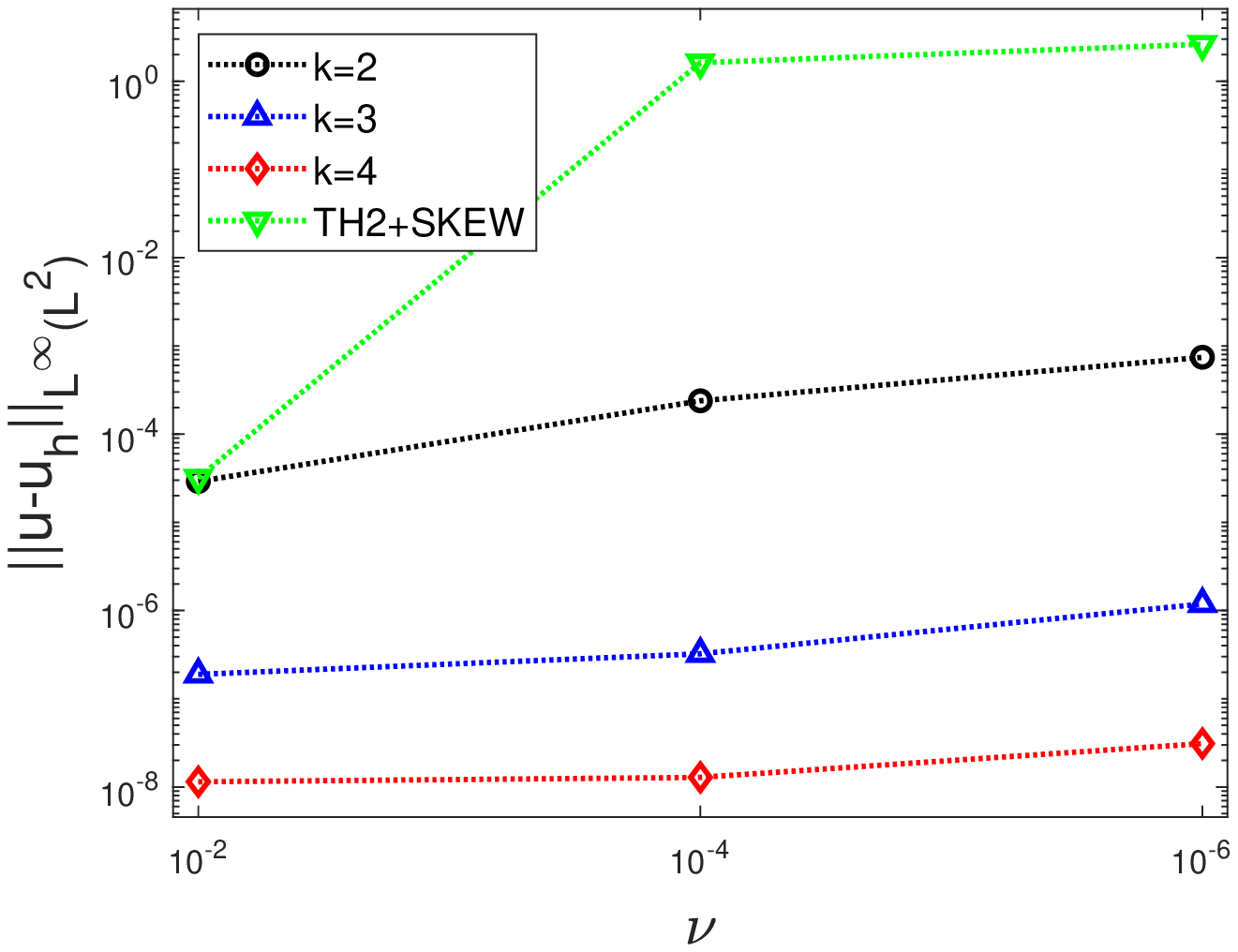}
  \includegraphics[width=0.49\textwidth]{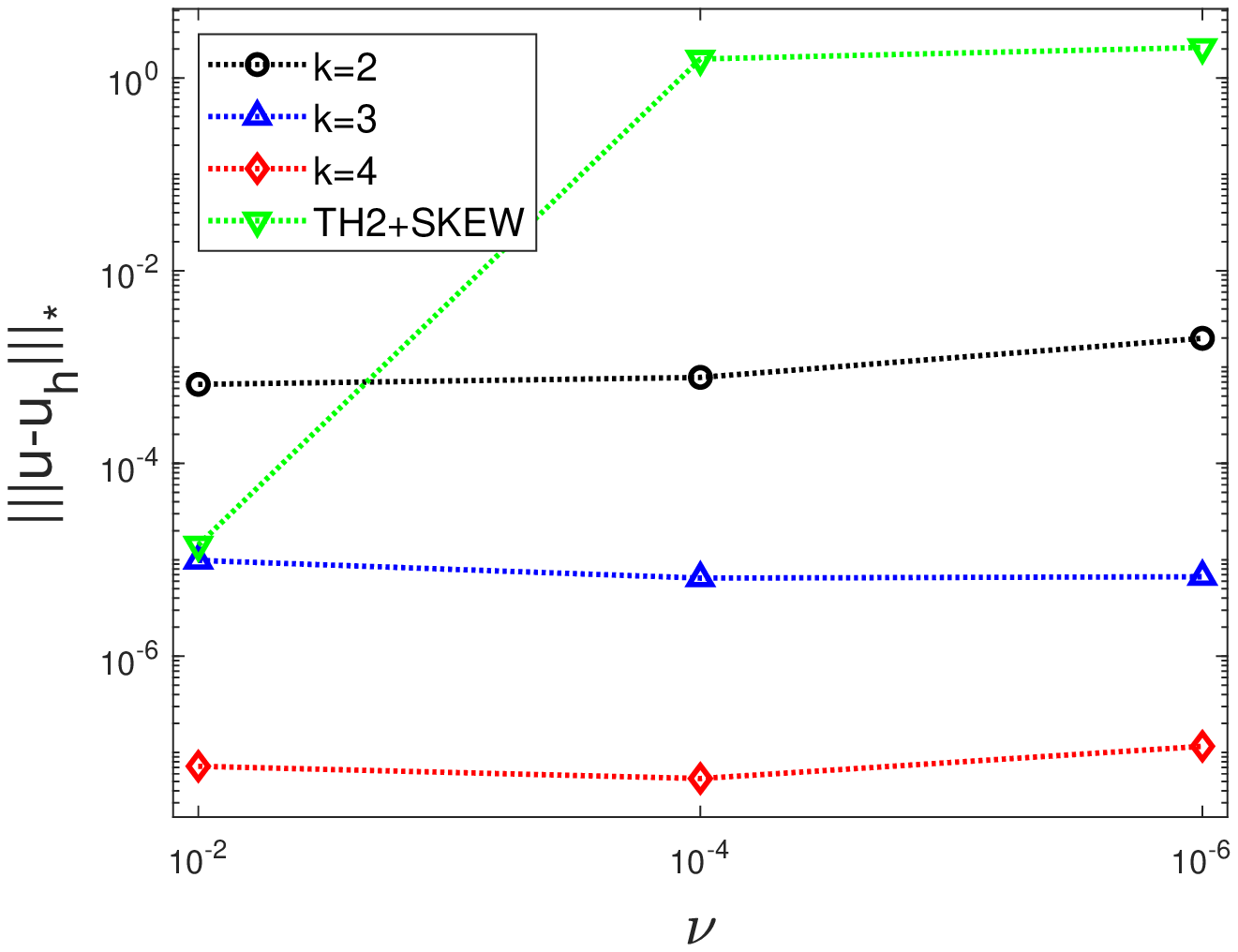}
  \caption{\label{fig:ex1_convrobust}Example 1: Plots of $\|\vecb{u}-\vecb{u}_h^\mathrm{s}\|_{L^\infty(L^2)}$ (energy error, left) and $|||(\vecb{u},0)-\vecb{u}_h|||_\star$ (right) from the estimate over $\nu\in\{10^{-2},10^{-4},10^{-6}\}$ with $\Delta t = 10^{-4}$ and $T=3$ for the reduced and $\mathcal{S}_2$ stabilized $P_k-P_0$ schemes ($k\in\{2,3,4\}$) on mesh level 3.}
\end{figure}

Figure~\ref{fig:ex1_convrobust} provides some study on convection-robustness
and depicts the maximal $L^2$ error for the reduced schemes of order
$k \in \lbrace 2,3,4 \rbrace$ with stabilization
$\mathcal{S}_2$ and a second-order classical Taylor--Hood
method with skew-symmetric discretization of the nonlinear convection term
for different choices of $\nu \in \lbrace 10^{-2}, 10^{-4},10^{-6} \rbrace$.
Here, two observations are in order. First, the Raviart--Thomas enriched methods
are convection-robust for all three stabilizations in the sense that the errors are
relatively insensitive to $\nu$. Second, the Taylor--Hood method
shows a large increase in the error when going from $\nu=10^{-2}$
to $\nu = 10^{-4}$ which indicates that this method is not
convection-robust.
Note however, that the Taylor--Hood method can be improved by adding
grad-div stabilization to have similar results and was just added in this form
to show the behavior of a non-convection-robust scheme.

Finally, Table~\ref{tab:costcomparison} compares the numerical costs for
the full and reduced scheme with $\mathcal{S}_2$ stabilization
and a Taylor--Hood scheme
with skew-symmetric convection term (SKEW) and grad-div stabilization
of the same order for $k \in \lbrace 2,3,4 \rbrace$. This time, the grad-div stabilization was
added such that all methods do exactly two implicit Picard iterations in each time
step (so altogether $10.000$ Picard iterations for $5.000$ time steps).
The direct
sparse matrix solver Pardiso \cite{SGFSpardiso} is used. For the first
iteration in each of the first 7 time steps, a complete solver (symbolic
factorization + numerical factorization + forward and backward substitutions) is employed. The
simulation was done on a laptop with an 11th Gen Intel(R) Core(TM)
i5-11400H CPU with 6 cores. For all other iterations we keep the
symbolic factorization of the sparse matrix.
The table reports the number of
degrees of freedom, nonzeros in the system matrix and solver
times in each case. It can be seen, that the reduction procedure
significantly reduces the numerical costs and that the full scheme,
despite the larger number of degrees of freedom,
has comparable numerical costs and solver times to a Taylor--Hood scheme.
\begin{table}
\caption{Example 1: Number of DoFs, number of nonzero entries of
the coefficient matrix $A$, and cost of the proposed method
(with $\mathcal{S}_2$ stabilization) and Taylor--Hood element method
(with SKEW and grad-div stabilization scaled by $h$) on mesh level 3
(7424 elements), with $\Delta t= 10^{-4}$ and $T=0.5$
(which amounts to totally 5000 time steps and 10000 Picard
iterations in every method).
$\text{Cost}_1$: total computing time in seconds.
$\text{Cost}_2$: total computing time in seconds on Pardiso.
$\text{Cost}_3$: mean computing time in seconds per iteration where a
complete solver is used.}
\footnotesize
\begin{tabular}{llccccccc}
\hline Order & Method & $\#\{\vecb{u}$ DoFs$\}$ & $\#\{p$ DoFs$\}$ & $\#\left\{\mathrm{nz}\left(A\right)\right\}$ & $\text{Cost}_1$ & $\text{Cost}_2$ & $\text{Cost}_3$ \\
\hline $k=2$ & Full & $44930$ & $22272$ & $1718 \mathrm{~K}$ & $2.31\text{e}3$ & $1.23\text{e}3$ & 0.5804 \\
             & Reduced & $30082$ & $7424$ & $872 \mathrm{~K}$ & $1.78\text{e}3$ & $8.44\text{e}2$ & $0.3777$ \\
             & TH & $30082$ & $3809$ & $974 \mathrm{~K}$ & $1.85\text{e}3$ & $1.07\text{e}3$ & $0.4213$ \\
       $k=3$ & Full & $89666$ & $44544$ & $5329 \mathrm{~K}$ & $5.50\text{e}3$ & $2.81\text{e}3$ & $1.4144$ \\
             & Reduced & $67394$ & $7424$& $2582 \mathrm{~K}$ & $4.22\text{e}3$ & $2.06\text{e}3$ & $0.7799$ \\
             & TH & $67394$ & $15041$ & $3559 \mathrm{~K}$ & $5.86\text{e}3$ & $3.81\text{e}3$ & $1.2671$ \\
       $k=4$ & Full & $149250$ & $74240$ & $12615 \mathrm{~K}$ & $1.12\text{e}4$ & $5.57\text{e}3$ & $2.8217$ \\
             & Reduced & $119554$ & $7424$& $6045 \mathrm{~K}$ & $8.20\text{e}3$ & $3.80\text{e}3$ & $1.5864$ \\
             & TH & $119554$ & $33697$ & $9211 \mathrm{~K}$ & $1.39\text{e}4$ & $9.17\text{e}3$ & $2.9569$ \\
\hline
\end{tabular}
\label{tab:costcomparison}
\end{table}

\subsection{Example 2}

This example considers the Kelvin--Helmholtz instability benchmark problem for which
reference values are computed and discussed in \cite{Schroeder_2019}.
Here, the case of Reynolds number $Re = 10.000$ and
the initial condition
\begin{align*}
  \vecb{u}^0(x,y) =
  \begin{pmatrix}
    u_\infty \tanh(\frac{2y-1}{\delta_0})\\
    0
  \end{pmatrix}
  + c_n
  \begin{pmatrix}
    -\partial_y \psi(x,y)\\
    \partial_x \psi(x,y)
  \end{pmatrix}
\end{align*}
with the stream function
\begin{align*}
  \psi(x,y) = u_\infty \exp\left(-\frac{(y-0.5)^2}{\delta_0^2}\right) (\cos(8\pi x) + \cos(20\pi x))
\end{align*}
is studied, where the parameters are chosen to be
$\delta_0 = 1/28$, $u_\infty = 1$ and $c_n = 10^{-3}$.
Via the relation $Re = \delta_0 u_\infty / \mu$,  the viscosity
is $\mu = 1 / (Re \delta_0) = 1/280.000 \approx 3.57 \cdot 10^{-6}$.
At the left and right boundary, i.e., for $x = 0$ and $x = 1$, periodic conditions are applied, while
at the top and bottom boundary, i.e., for $y = 0$ and $y = 1$, free-slip conditions are applied.

\begin{figure}\hfill
  \includegraphics[width=0.4\textwidth]{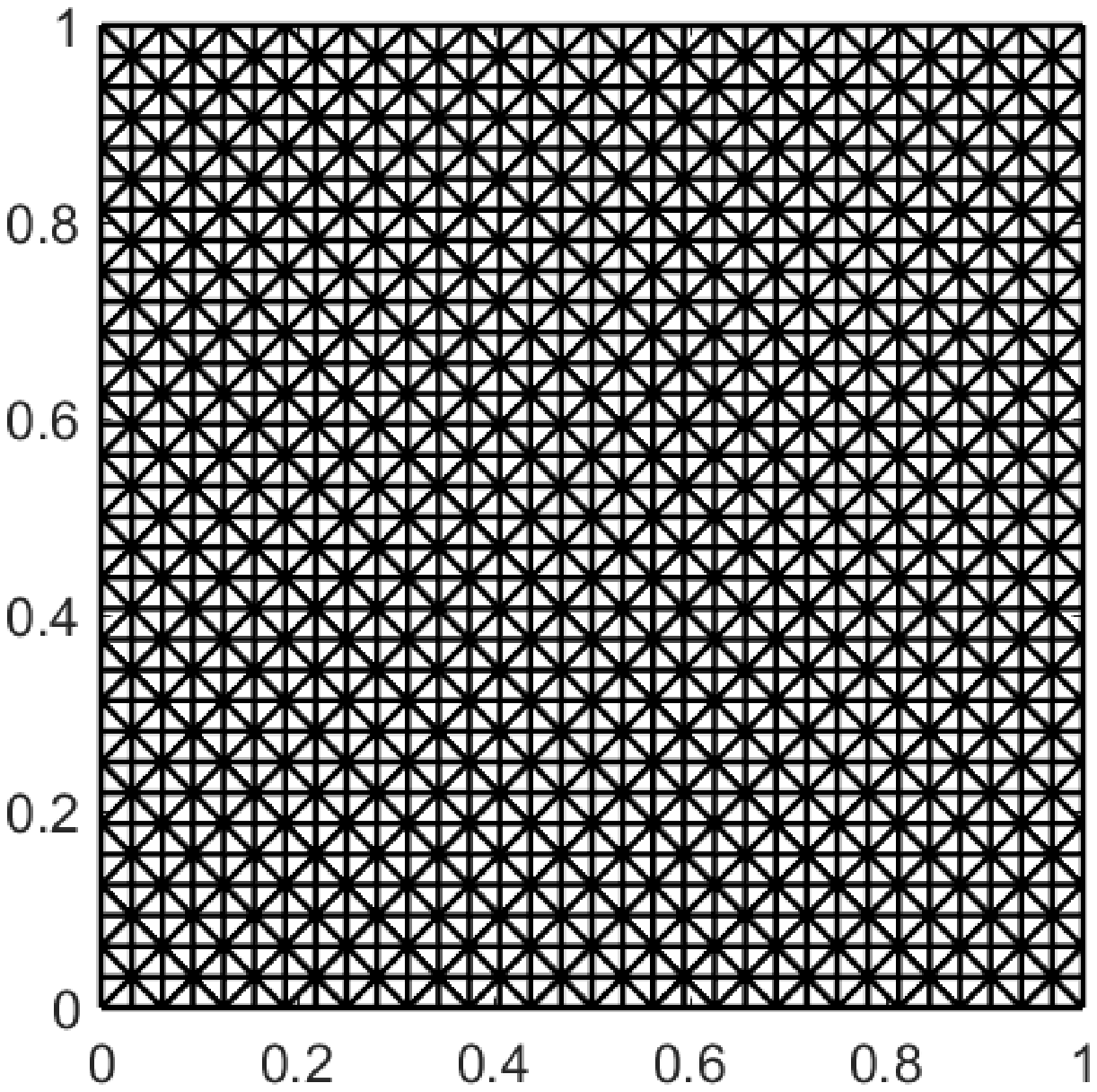}
  \hfill
  \includegraphics[width=0.4\textwidth]{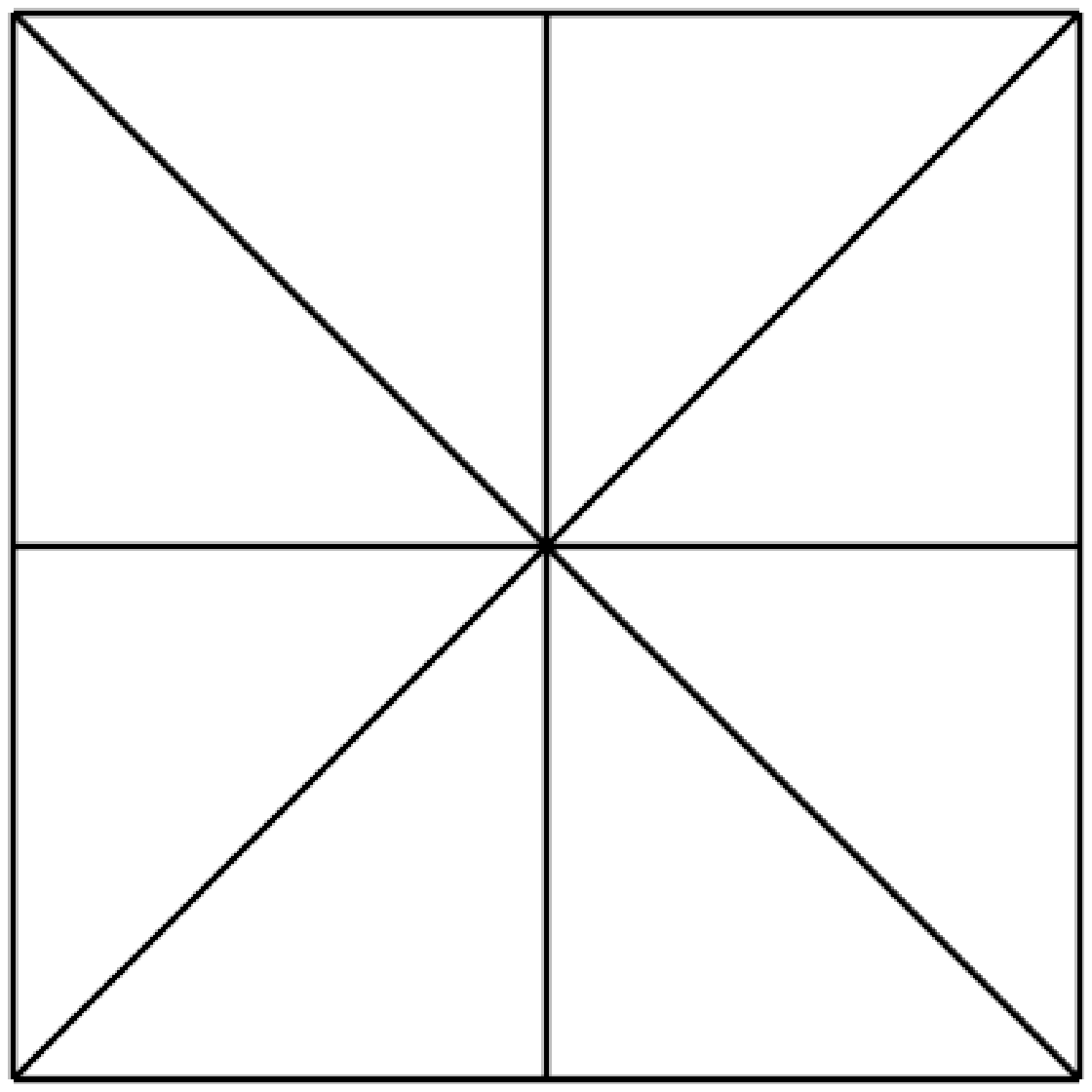}\hfill\hfill
  \caption{\label{fig:KHmeshes}Example 2: A mesh consisting of $32 \times 32$ divided squares (left) built
  by repeating a $2 \times 2$ squared structure (right).}
\end{figure}

This problem describes the successive pairing of vortices until finally only one rotating vortex remains.
It is clearly convection-dominated so that a stabilized method, e.g., a convection-robust method, is
necessary for performing stable numerical simulations. In \cite{Schroeder_2019} simulations were performed
on grids up to $256\times 256$ mesh cells and with a velocity space of polynomial degree $8$. It was found that
predicting a reference solution is quite challenging for the Reynolds number used in our simulations. In fact,
a reference solution could be obtained only up to $t/\delta_0 = 200$, i.e., $t=50/7 \approx 7.14$, which corresponds to the situation
of two rotating vortices. It has been observed in \cite{Schroeder_2019} that the formation time of the
final vortex and  also its position is very sensitive to
the setup and even tiny differences in algorithms (or even compiler options) can lead to a different behavior.

The example is simulated on structured and symmetric meshes like the one depicted in Figure~\ref{fig:KHmeshes}
and for the polynomial order $k = 2$.
For the time discretization, we employ the
Crank--Nicolson scheme with time step $\tau = 10^{-3} \delta_0$.
Quantities of interest, besides the kinetic energy, comprise the enstrophy given by
\[
   \mathcal{E}(t, \vecb{u}) := \frac{1}{2} \| \mathrm{Curl}(\vecb{u}) \|_{L^2(\Omega)}
   \quad \mbox{with} \quad \mathrm{Curl}(\vecb{u}) := \partial_x u_2 - \partial_y u_1,
\]
which should by monotonously decreasing in time, and
the vorticity thickness defined by
\[
    \delta(t, \vecb{u}) := \frac{2 u_\infty}{\sup_{y \in [0,1]} \left| \int_0^1 \mathrm{Curl}(\vecb{u})\ \textit{dx}\right|},
\]
where we approximate the supremum in the denominator by taking the maximum
over all $y \in \lbrace (1+2k)/2048 : k = 1,\ldots, 1024 \rbrace$. In this example we try scheme \eqref{eq:semischeme2} with stabilization $\mathcal{S}_2$ only and the parameter $\gamma$ is chosen as 1 again.

The merging process of the vortices obtained from our method is shown in Figure~\ref{fig:KH_vorticity}.
Figure~\ref{fig:KH_QOI} shows the evolution of these quantities in the performed
simulation.
Although our simulations were performed on relatively coarse meshes and
low polynomial order of the velocity, we observe that the general
qualitative behavior of the vorticity thickness and the enstrophy is
inline with the expectations. The vorticity thickness shows the right kinks when
the vortices are forming. Even for $t/\delta_0 > 200$, where no reference
solution is available, the qualitative behavior of the results of
the proposed scheme corresponds with those from \cite{Schroeder_2019}.

\begin{figure}
  \centering

  \includegraphics[width=0.30\textwidth]{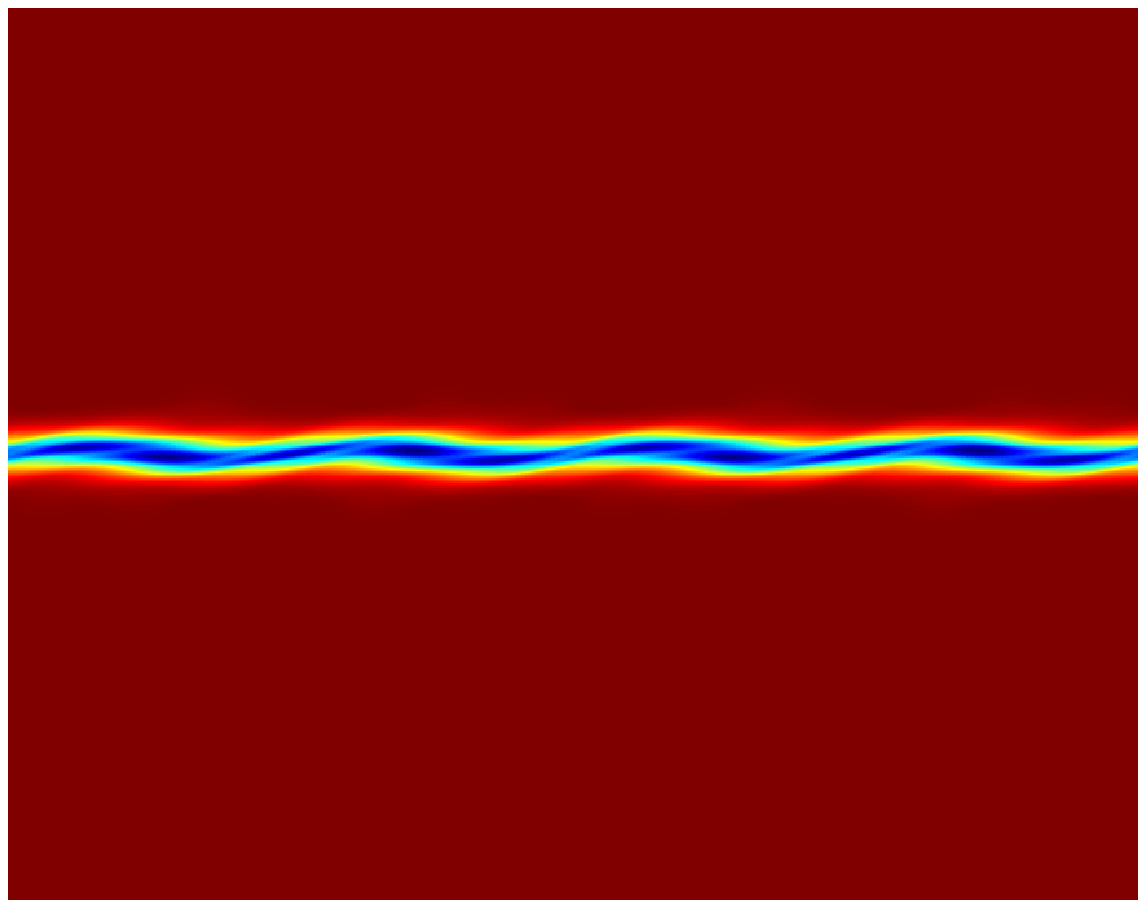}
  \includegraphics[width=0.30\textwidth]{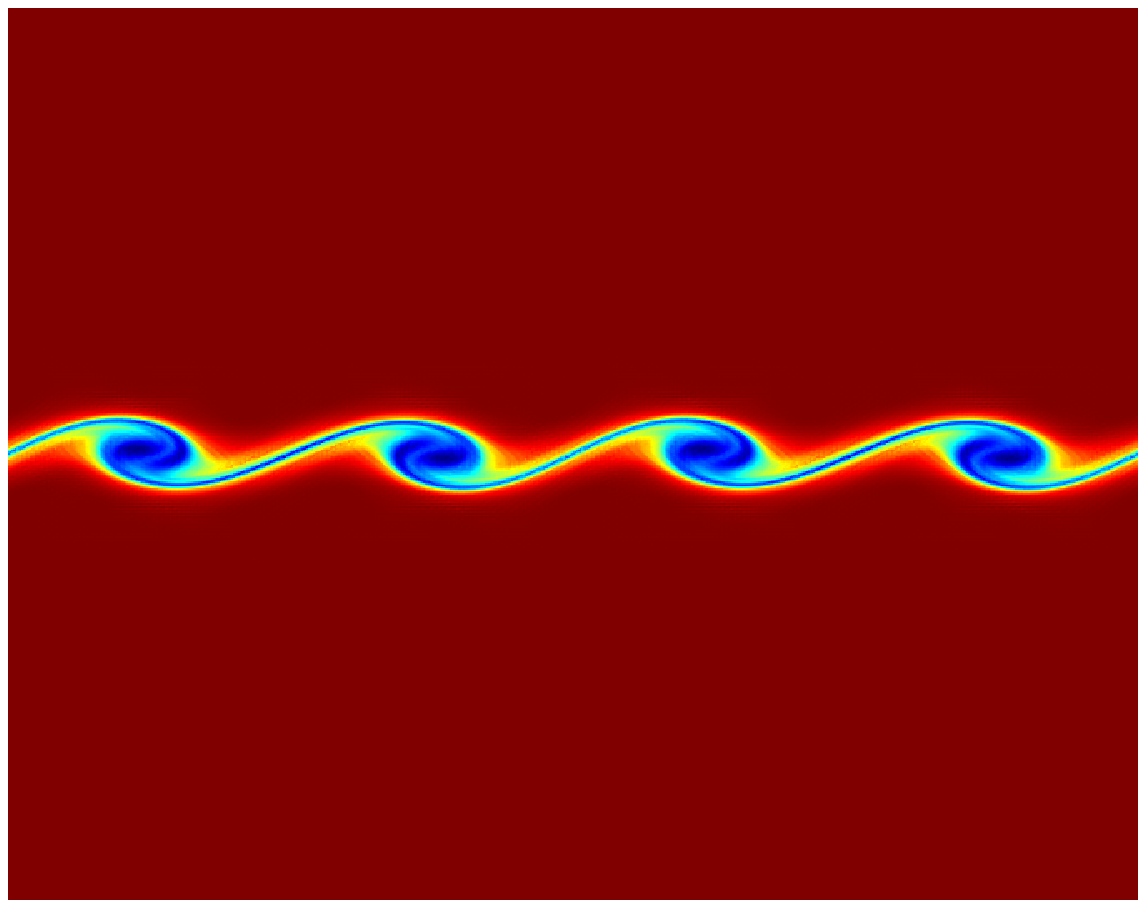}
  \includegraphics[width=0.30\textwidth]{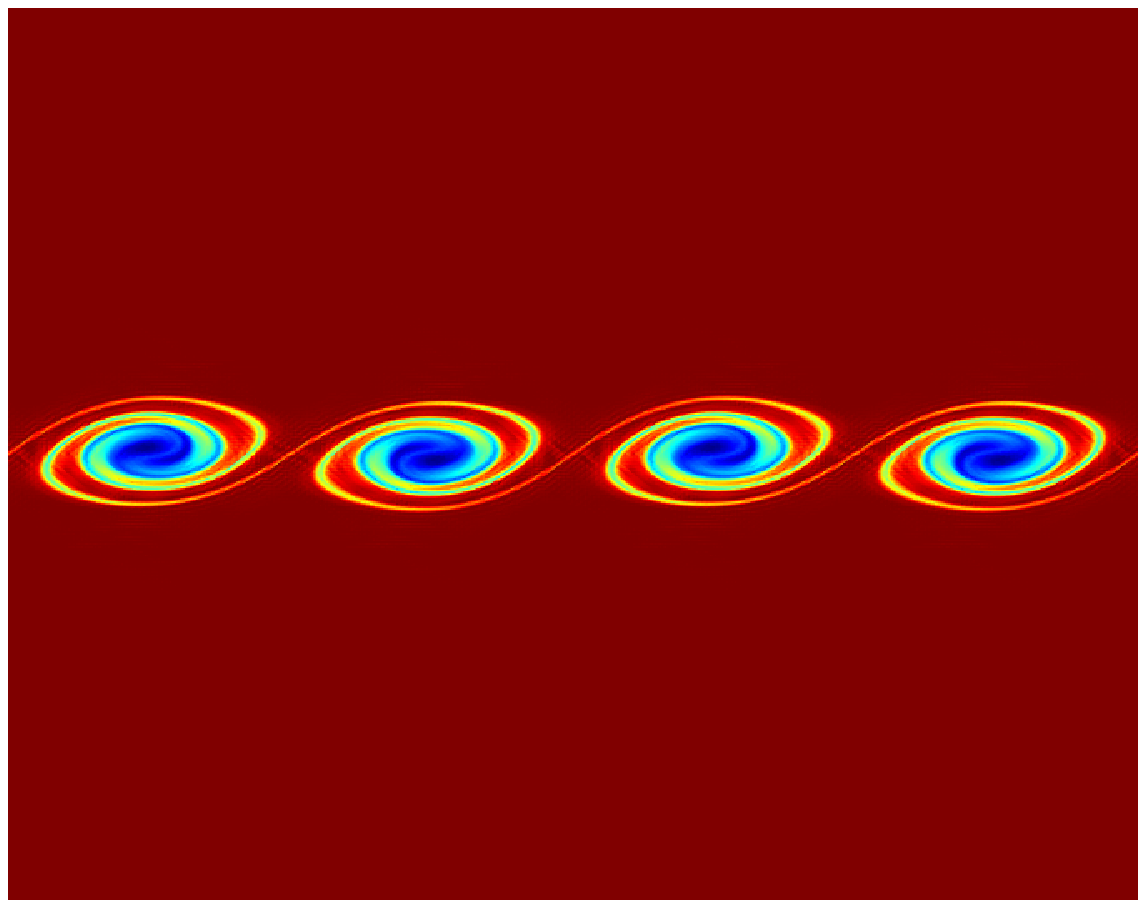}

  \includegraphics[width=0.30\textwidth]{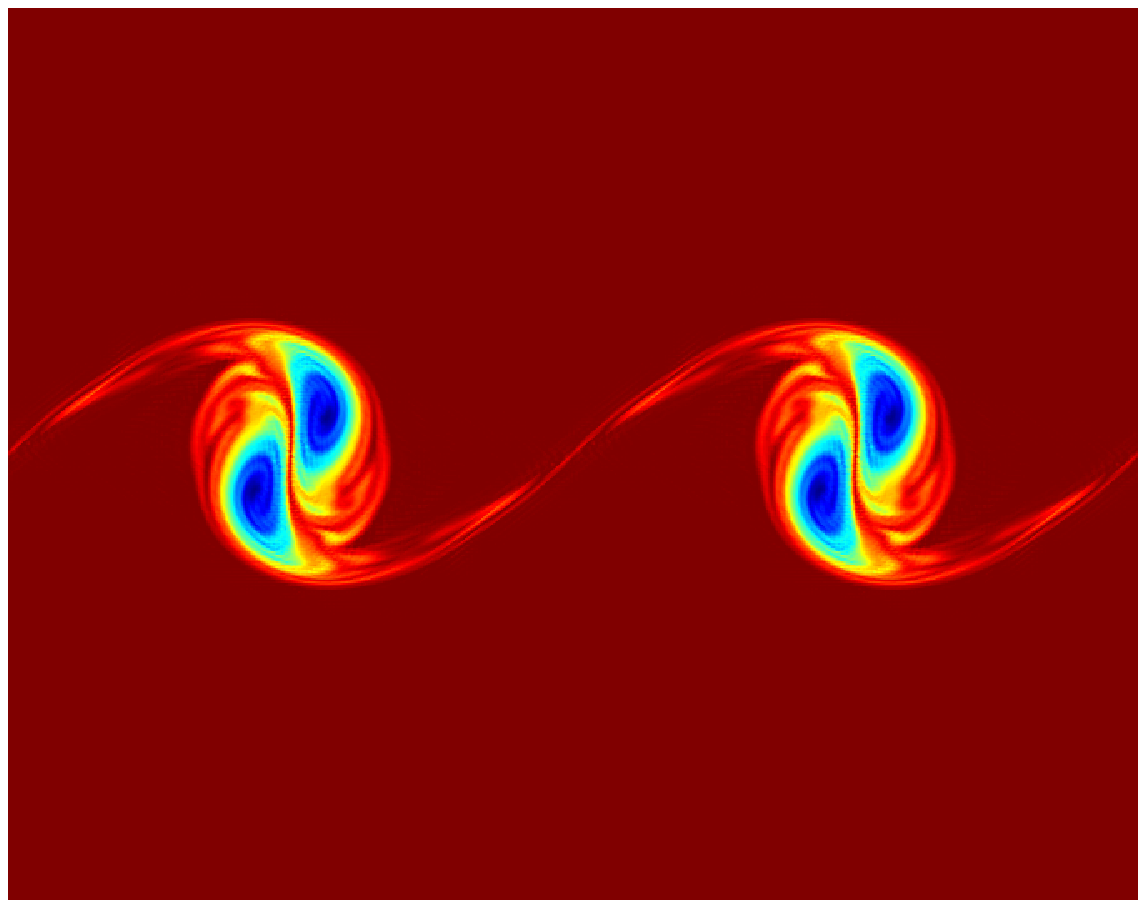}
  \includegraphics[width=0.30\textwidth]{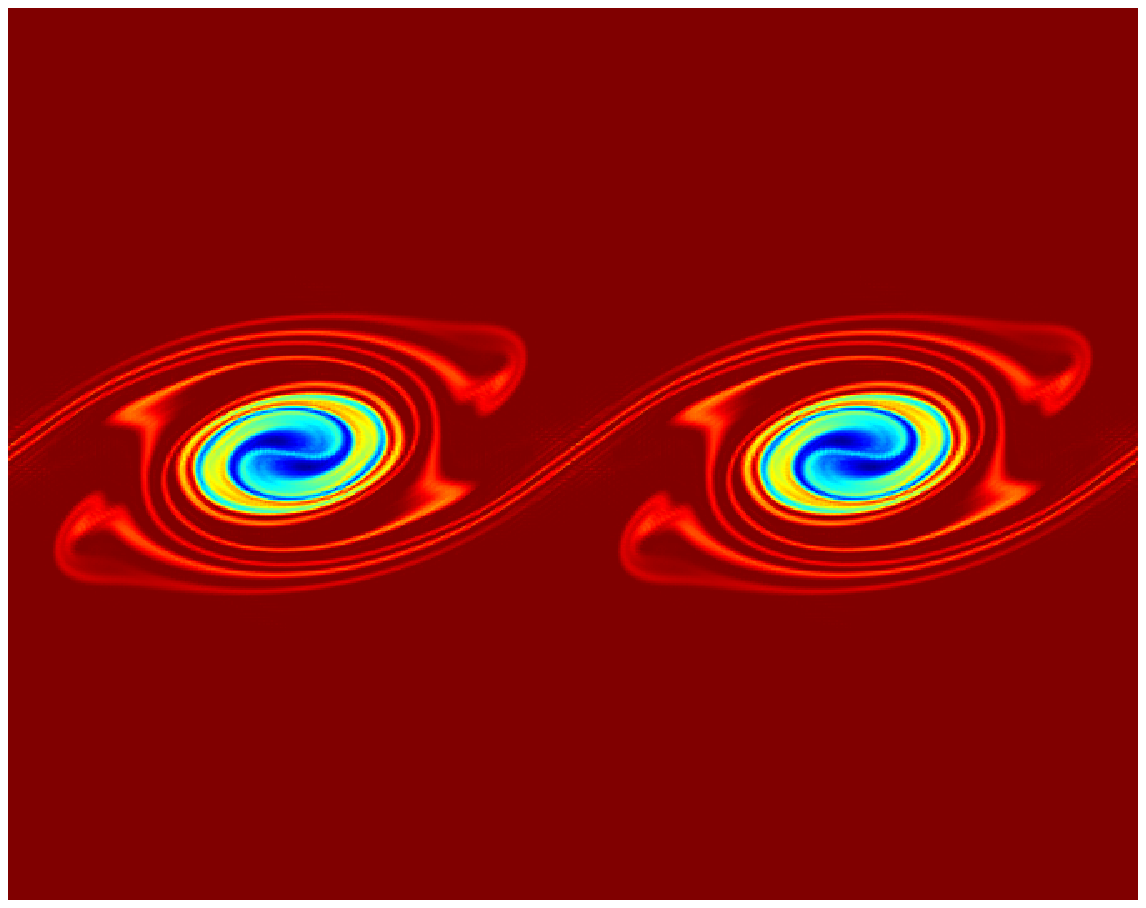}
  \includegraphics[width=0.30\textwidth]{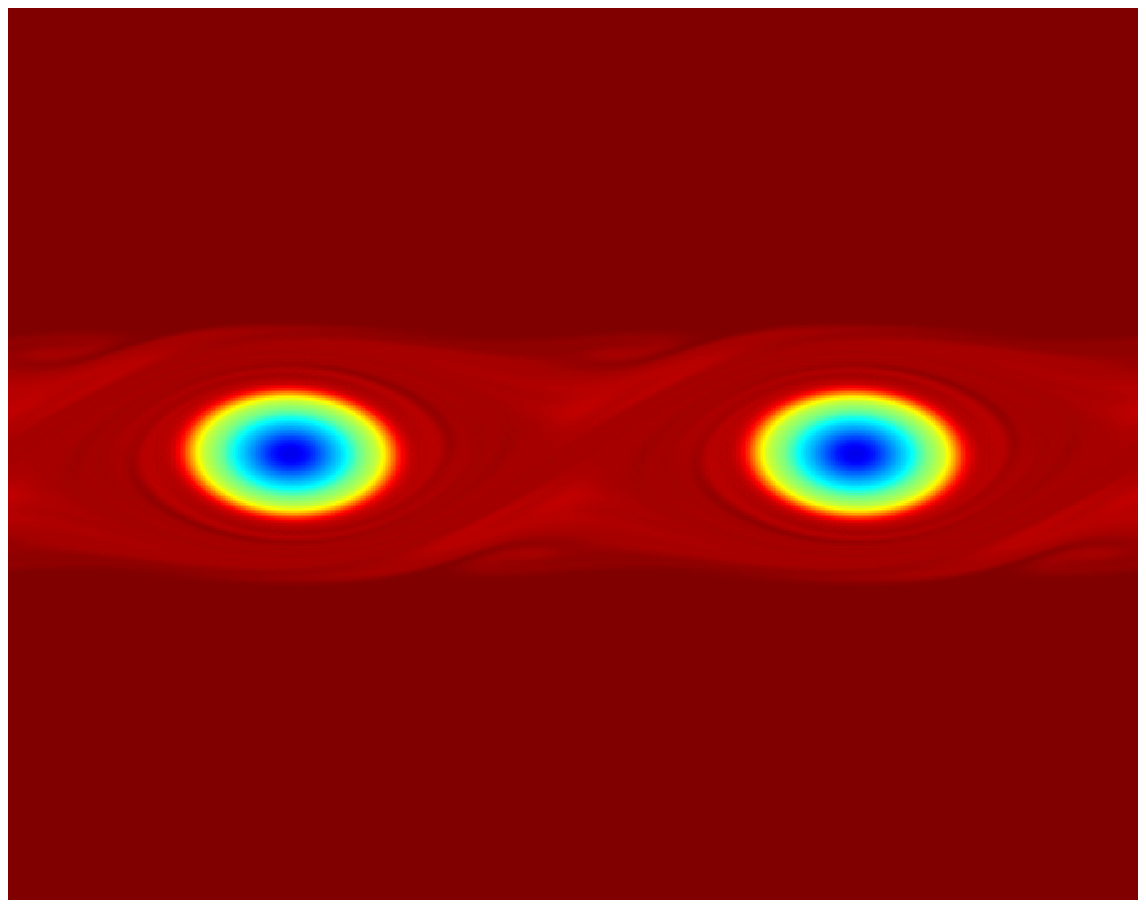}

  \includegraphics[width=0.30\textwidth]{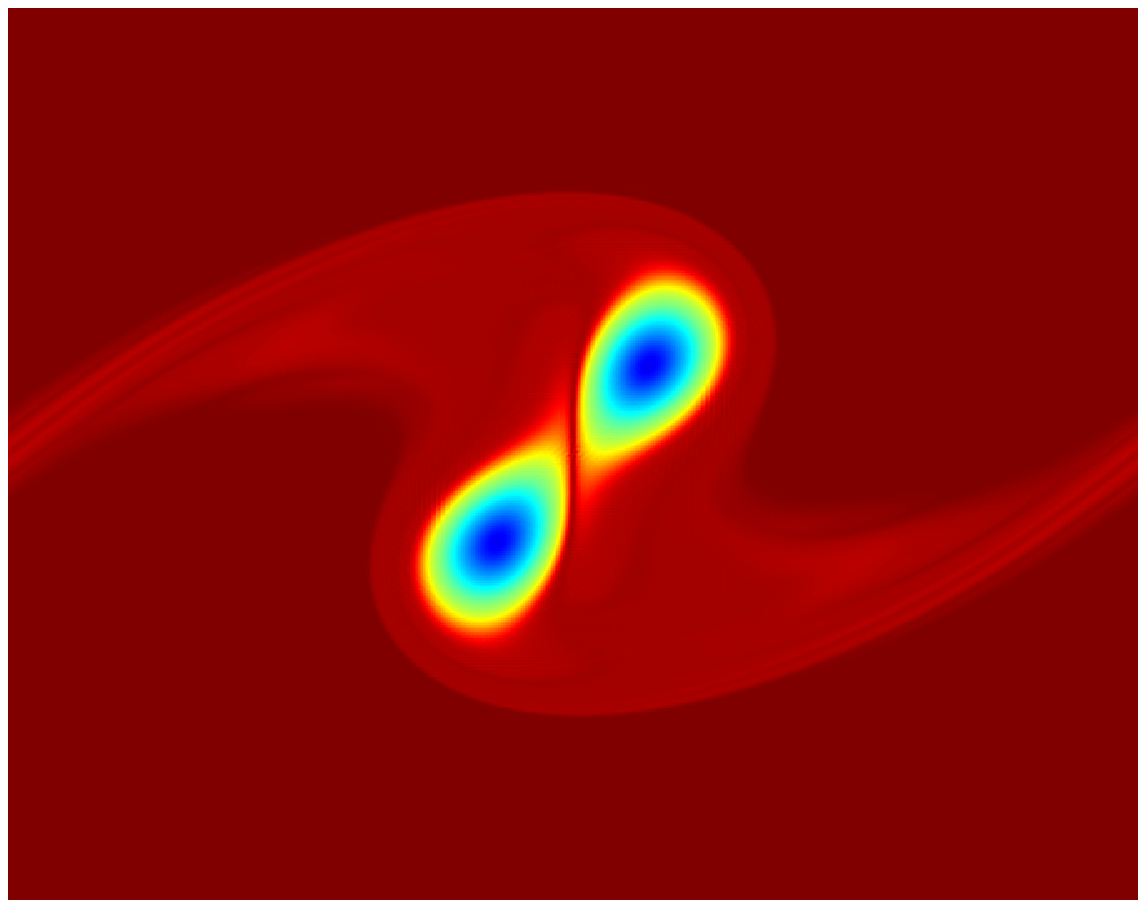}
  \includegraphics[width=0.30\textwidth]{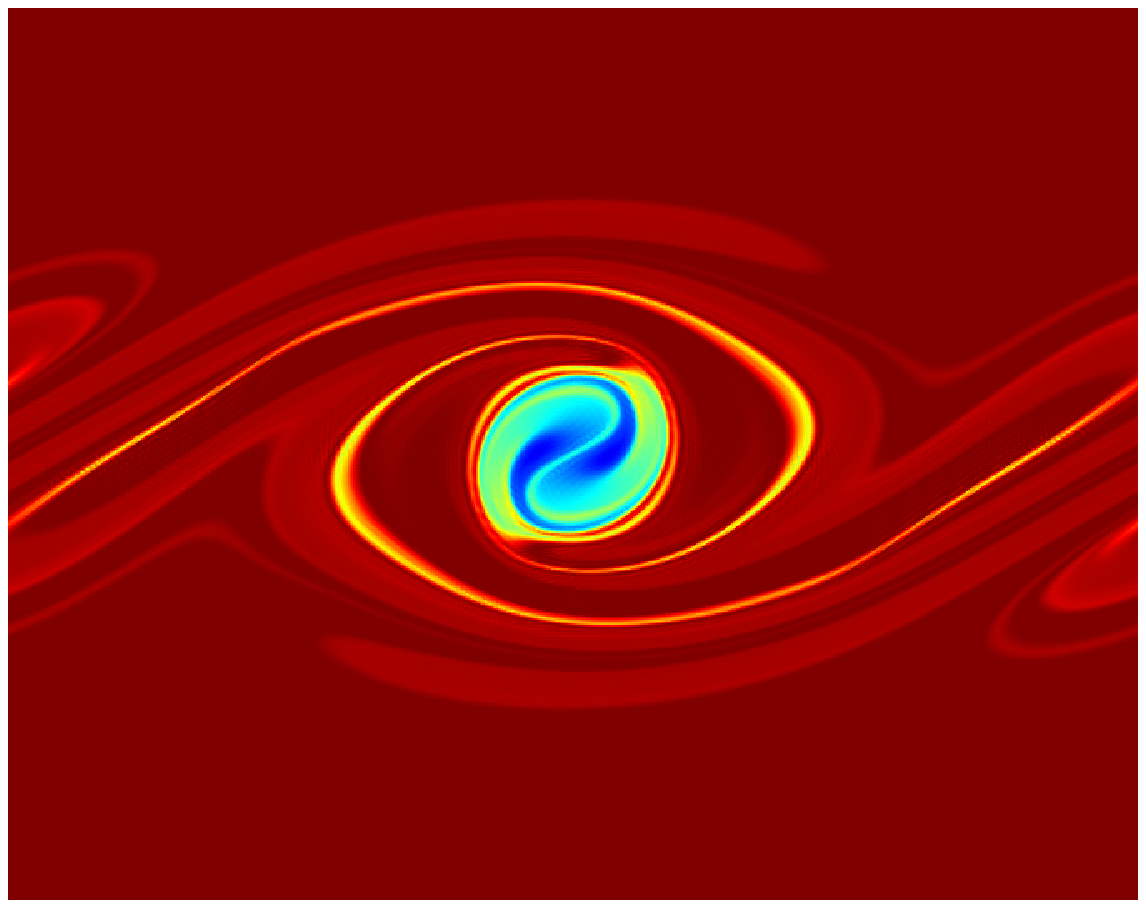}
  \includegraphics[width=0.30\textwidth]{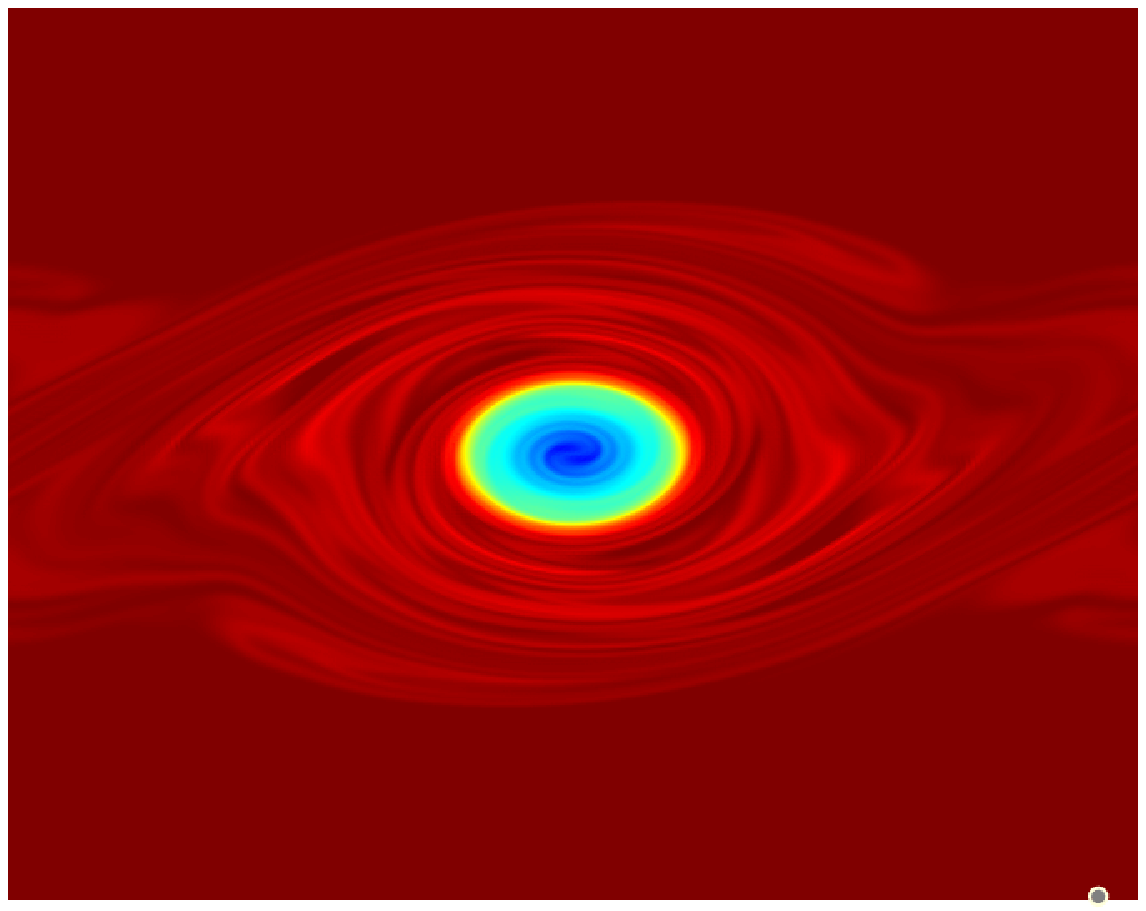}
  \caption{\label{fig:KH_vorticity}Example 2: From left to right and top to bottom, vorticity at $\bar{t}=5,10,17,34,56,200,272,308,400$ on the mesh with 131072 cells.}
\end{figure}

\begin{figure}
  \includegraphics[width=0.49\textwidth]{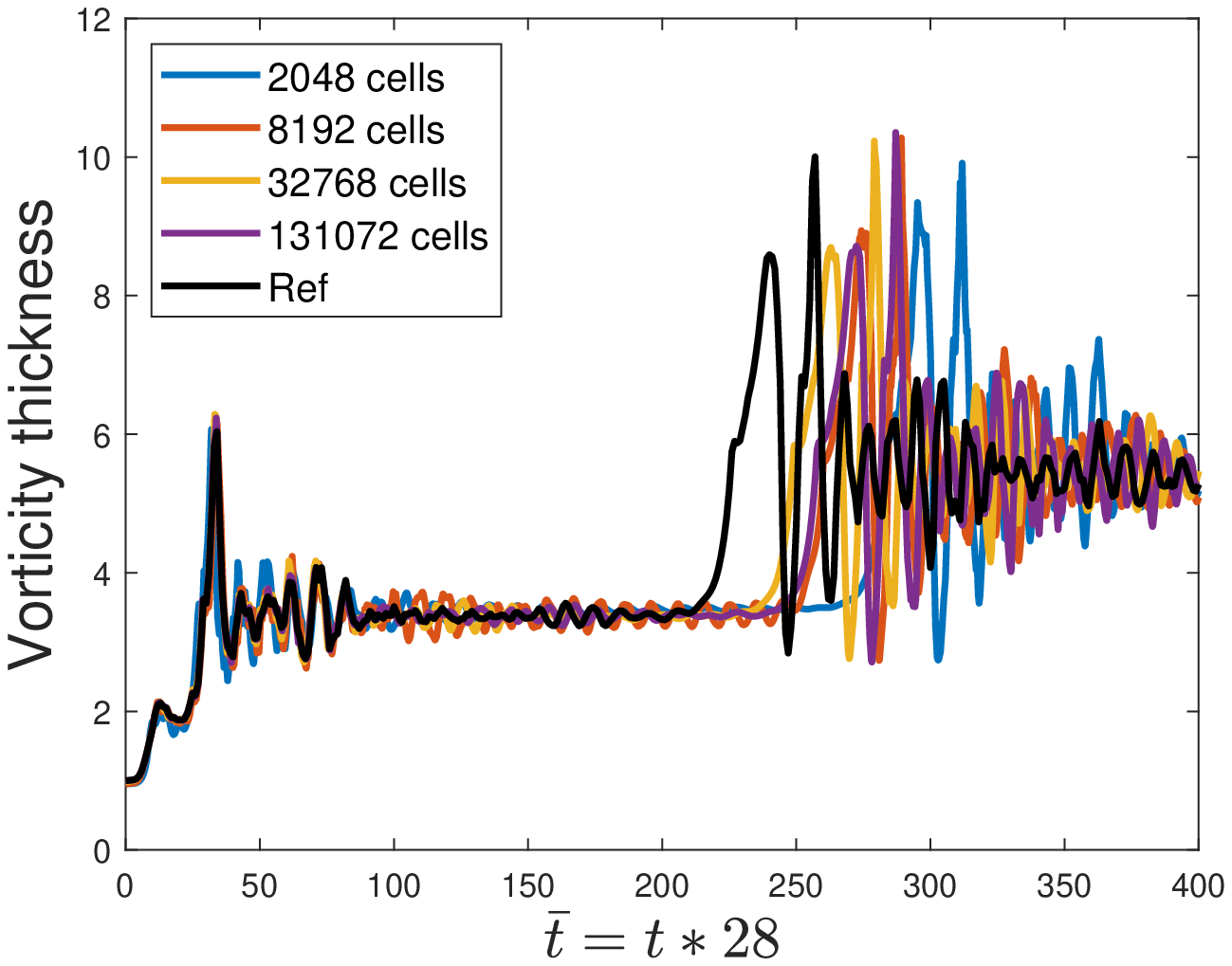}
  \hfill
  \includegraphics[width=0.49\textwidth]{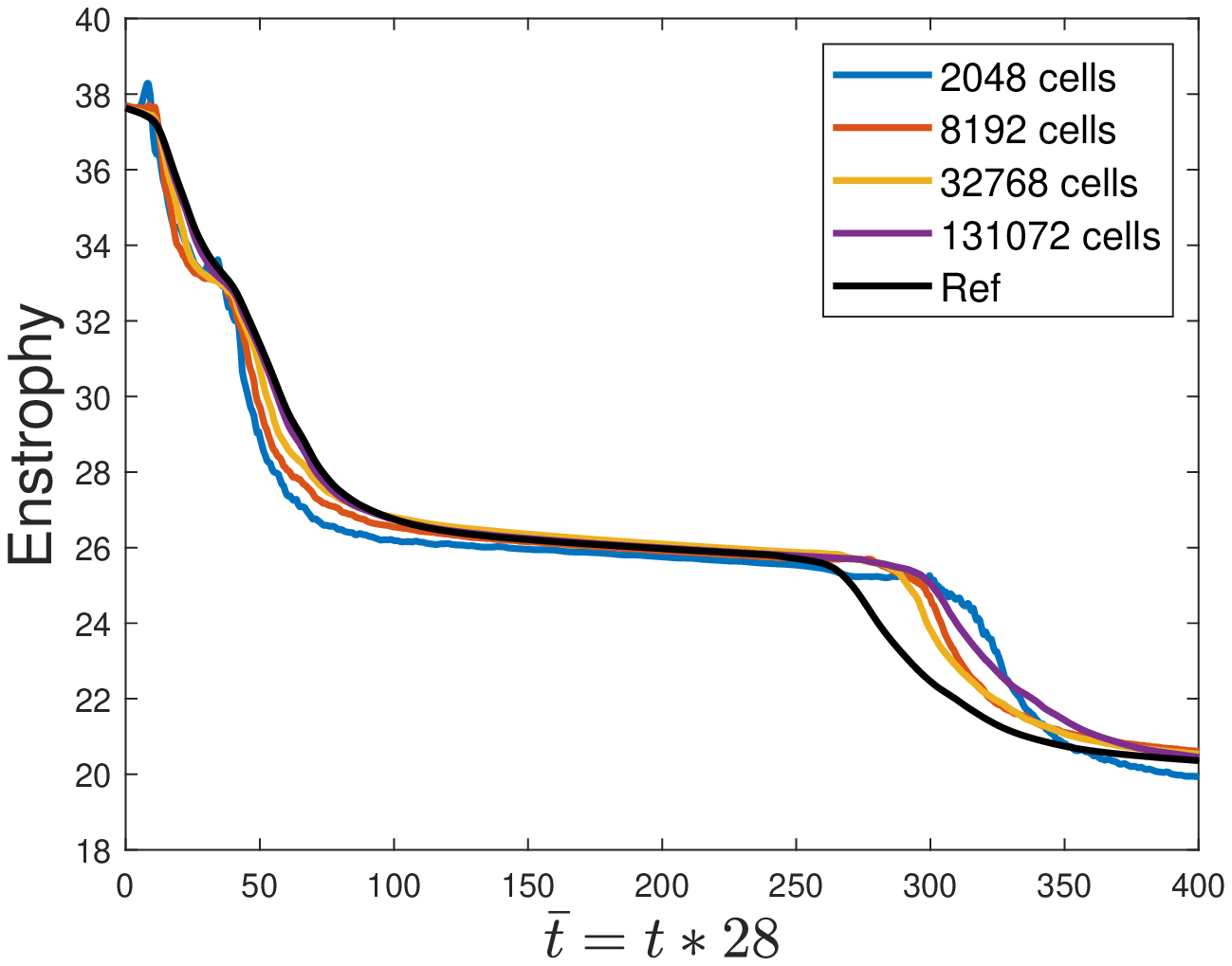}
  \caption{\label{fig:KH_QOI}Example 2: Vorticity thickness (left) and enstrophy (right).}
\end{figure}

\section{Conclusions and Outlook}
\label{sec:conclusions}

This paper successfully extends the Raviart--Thomas enriched
Scott--Vo\-ge\-lius elements from the stationary Stokes model problem
\cite{li2021low,JLMR2022} to the full instationary Navier--Stokes problem.
This results in a family of pressure-robust, divergence-free,
EMA-preserving, and convection-robust methods. However,
a stabilization for the Raviart--Thomas enrichment
part is required. Three choices are analyzed, among them a grad-div like stabilization
for the enrichment part and the upwind stabilization known from
DG framework, and all of them can be used and ensure convergence of at least order $k$
in the $L^2$-norm at any time. One important advantage of the novel family
is the possibility to reduce the scheme to a $\vecb{P}_k \times P_0$
scheme, which greatly reduces the number of unknowns and the
computational costs without compromising any
features listed above.
All theoretical results are supported by numerical experiments,
in particular by a simulation of the challenging two-dimensional
Kelvin--Helmholtz benchmark problem.

\smallskip
Out of the scope of this paper but
interesting aspects for future research
are the application of the new family
to real turbulent flows in three dimensions,
the investigation of preconditioners and iterative solvers and
additional convection stabilization, e.g.,
in the spirit of \cite{ABBGLM2021}, to improve the
order of convergence in convection-dominated
regimes.

\section*{Acknowledgments}
Naveed Ahmed would like to acknowledge financial support from the Gulf University
for Science and Technology for an internal Seed Grant (No.~278877).
Xu Li was supported by the China Scholarship Council (No.~202106220106)
and the National Natural Science Foundation of China (No.~12131014).
Christian Merdon gratefully acknowledges the funding by the German Science Foundation (DFG) within the project ``ME 4819/2-1".

\bibliographystyle{abbrv}
\bibliography{lit}

\end{document}